\newtheorem{thm}{Theorem}[section]
\newtheorem{prop}[thm]{Proposition}
\newtheorem{cor}[thm]{Corollary}
\newtheorem{lemma}[thm]{Lemma}
\def\Z{\mathbb{Z}}
\def\d{{\partial }}
\def\Uc{\mathcal{U}}
\def\Bc{\mathcal{B}}
\def\Ec{\mathcal{E}}
\newcommand\map[1]{\stackrel{#1}{\longrightarrow}}
\title{Moore's theorem}
\author{V. Timorin}
\begin{document}
\begin{abstract}
In this (mostly expository) paper, we review a proof of the following old theorem of
R.L. Moore:
for a closed equivalence relation on the 2-sphere such that
all equivalence classes are connected and non-separating, and not all points
are equivalent, the quotient space is homeomorphic to the 2-sphere.
The proof uses a general topological theory close to but simpler than
an original theory of Moore.
The exposition is organized so that to make applications
of Moore's theory (not only Moore's theorem) in complex dynamics easier,
although no dynamical applications are mentioned here.
\end{abstract}

\maketitle

\section{Introduction}

In this (mostly expository) paper, we review a proof of the following theorem
of R.L. Moore \cite{Moore}:

\begin{thm}
  \label{moore}
  Let $\sim$ be a closed equivalence relation on the 2-sphere $S^2$ such that all
  equivalence classes are connected and non-separating, and not all points
  are equivalent.
  Then the quotient space $S^2/\sim$ is homeomorphic to the 2-sphere.
\end{thm}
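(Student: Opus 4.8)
The plan is to deduce Theorem~\ref{moore} from a topological characterization of the $2$-sphere among Peano continua, concentrating the geometric content into a few separation statements about the quotient $X=S^2/{\sim}$. Concretely, I would use Zippin's characterization: a nondegenerate Peano continuum that contains a simple closed curve, in which every simple closed curve separates the space but no arc separates it, is homeomorphic to $S^2$. Write $\pi\colon S^2\to X$ for the quotient map. Since the relation is closed and $S^2$ is compact Hausdorff, $X$ is compact Hausdorff and the decomposition into classes is upper semicontinuous with compact connected elements; thus $\pi$ is a closed, monotone, surjective map. Because $S^2$ is a Peano continuum and the decomposition is monotone and upper semicontinuous, $X$ is again a Peano continuum (metrizable, as the decomposition of the compact metric space $S^2$ is upper semicontinuous), and it is nondegenerate since not all points are equivalent.

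The organizing device will be a correspondence between separation in $X$ and in $S^2$: for closed $K\subseteq X$ one has $\pi^{-1}(X\setminus K)=S^2\setminus\pi^{-1}(K)$, and since $\pi$ is monotone and $X$ is locally connected, $X\setminus K$ is connected if and only if $S^2\setminus\pi^{-1}(K)$ is connected. The nontrivial direction uses that the $\pi$-preimage of a connected open subset $U$ of a Peano continuum is connected: fixing a base point $u_0\in U$ and joining each $u\in U$ to $u_0$ by an arc $\alpha_u\subseteq U$, the sets $\pi^{-1}(\alpha_u)$ are connected (monotone preimages of subcontinua) and all meet $\pi^{-1}(u_0)$, so their union $\pi^{-1}(U)$ is connected. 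Granting this, the statement that no point of $X$ separates $X$ is immediate: a point $[p]\in X$ has $\pi^{-1}([p])$ equal to an equivalence class, which is non-separating by hypothesis, so $S^2\setminus\pi^{-1}([p])$ is connected. In particular $X$ has no cut point, hence is not a dendrite, hence contains a simple closed curve.

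It remains to control preimages of arcs and of simple closed curves, which is the heart of the argument. Each fiber of $\pi$ is a connected non-separating continuum in $S^2$, so by Alexander duality its reduced $\check{H}^0$ and its $\check{H}^1$ both vanish. Given an arc $A\subseteq X$ or a simple closed curve $J\subseteq X$, I would apply the Vietoris--Begle mapping theorem to the monotone surjections $\pi^{-1}(A)\to A$ and $\pi^{-1}(J)\to J$ to obtain $\check{H}^1(\pi^{-1}(A))\cong\check{H}^1(A)=0$ and $\check{H}^1(\pi^{-1}(J))\cong\check{H}^1(J)\cong\Z$. Alexander duality in $S^2$, in the form $\check{H}^1(L)\cong\widetilde{H}_0(S^2\setminus L)$ for closed $L$, then shows that $S^2\setminus\pi^{-1}(A)$ is connected while $S^2\setminus\pi^{-1}(J)$ is not. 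By the correspondence above, no arc separates $X$ and every simple closed curve separates $X$, so Zippin's characterization applies and $X\cong S^2$.

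The main obstacle is precisely this cohomological step: although each individual class is non-separating, the preimage of an arc or circle is an uncountable union of such classes glued along a $1$-dimensional base, and one must show that collapsing neither creates nor destroys essential $1$-cycles. The Vietoris--Begle argument makes this transparent but imports \v{C}ech cohomology; I expect the paper's ``simpler'' theory to replace it by a direct, elementary proof that the preimage of an arc is non-separating while the preimage of a simple closed curve separates $S^2$ into exactly two pieces. A secondary point to treat with care is verifying local connectedness of $X$ cleanly, rather than invoking the theorem on monotone upper semicontinuous decompositions as a black box, since the separation correspondence depends on it.
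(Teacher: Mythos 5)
Your proposal is correct, but it takes a genuinely different route from the paper's --- in fact, precisely the route the paper's introduction mentions and deliberately declines. You reduce everything to Zippin's sphere characterization and verify its hypotheses with \v{C}ech-cohomological machinery (the Vietoris--Begle mapping theorem plus Alexander duality for arbitrary compacta). The paper instead proves its own, weaker characterization from scratch (Theorem \ref{moore-theory}: the Jordan domain axiom plus the Basis axiom) and then verifies those two axioms for $X=S^2/{\sim}$. The two separation facts you extract from Vietoris--Begle are exactly the paper's Theorem \ref{non-sep} (the preimage of an arc is non-separating) and Theorem \ref{Jordan} (the preimage of a circle separates $S^2$ into exactly two components), but the paper proves them elementarily: a Janiszewski-type statement (Proposition \ref{separate}) obtained from Mayer--Vietoris and Alexander duality for \emph{open} subsets of $S^2$, where simplicial homology suffices, combined with a bisection-and-compactness argument over the parameter interval --- no \v{C}ech theory and no duality for wild compact sets. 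The price the paper pays for its weaker characterization is that it must also establish the Basis axiom, i.e., manufacture genuine simple closed curves in $X$ near every point; this is Theorem \ref{Jordanization}, a delicate parity argument with no counterpart in your approach, since Zippin's hypotheses involve only curves that already happen to exist in $X$. What each route buys: yours is much shorter, granted the black boxes (Zippin's theorem, Vietoris--Begle, and the decomposition theory of Peano continua, including metrizability and local connectedness of monotone upper semicontinuous quotients); the paper's is nearly self-contained, uses only light algebraic topology, and --- the author's stated motivation --- admits a relative version for restricted classes of curves (Section \ref{s_rel}), which the Zippin route does not. Your closing guess about how the paper replaces the cohomological step by direct elementary arguments is accurate.
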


Here, a {\em closed} equivalence relation on $S^2$ is defined as an equivalence
relation on $S^2$ that is a closed subset of $S^2\times S^2$.
A set $A\subset S^2$ is called {\em non-separating} if the complement $S^2-A$
is connected.

In the original proof of Theorem \ref{moore} (a complete proof is spread
over several publications \cite{Moore_foundations,Moore_postulates,Moore_simple,
Moore_separate,Moore,Moore_prime}, and it uses some results of \cite{Jan,Mullikin}),
Moore used his axiomatic
description of the sphere \cite{Moore_foundations}: a system of axioms on a topological space $X$
that guarantee that $X$ is homeomorphic to the 2-sphere.
We follow (in a much simplified form) this original approach.
Our objective, however, is not to give a simpler proof of Theorem
\ref{moore} but rather to introduce a topological theory (close to that
of Moore) describing the 2-sphere.
In fact, the author keeps several other applications of the
same theory in mind, motivated mainly by rational dynamics.
They will be described in separate preprint(s) or publication(s).

Let $X$ be a Hausdorff topological space.
Recall that a {\em path} in $X$ is a continuous map $\alpha:[0,1]\to X$.
A path $\alpha$ is called {\em simple} if $\alpha(t)\ne\alpha(t')$ unless $t=t'$.
A {\em simple curve} in $X$ is defined as the image of some simple continuous path.
If $A=\alpha[0,1]$ for a simple path $\alpha$, then $\alpha(0)$ and $\alpha(1)$
are called the {\em endpoints} of the simple curve $A$; and for $0\le a<b\le 1$,
the simple curve $\alpha[a,b]$ is called a {\em segment} of the simple curve $A$.
A {\em simple closed curve} in $X$ is defined as the image of $S^1$ under a continuous
embedding $\gamma:S^1\to X$.
Clearly, the image under $\gamma$ of a closed arc of $S^1$ is
a simple curve --- we call it an {\em arc} of the closed simple curve $\gamma(S^1)$.

One system of axioms that characterizes the 2-sphere is the following
(remarkably, only two axioms are enough!):
\begin{enumerate}
  \item {\em Jordan domain axiom.}
  For every simple closed curve $C$, the complement $X-C$ consists of two
  connected components
  called the {\em Jordan domains} bounded by $C$.
  Moreover, the boundary of any Jordan domain bounded by $C$ is exactly $C$.
  \item {\em Basis axiom.}
  There is a countable basis of topology in $X$ consisting of Jordan domains.
\end{enumerate}

\begin{thm}
  \label{moore-theory}
  Suppose that a compact, connected, locally path connected,
  Hausdorff topological space
  $X$ satisfies the Jordan domain axiom and the Basis axiom.
  Then $X$ is homeomorphic to the 2-sphere.
\end{thm}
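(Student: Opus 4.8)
The plan is to show that $X$ is a compact surface assembled from finitely many topological disks, and then to identify that surface as the $2$-sphere. First I would record the standing reductions. Since $X$ is compact, Hausdorff, and (by the Basis axiom) second countable, Urysohn's metrization theorem makes $X$ metrizable; together with connectedness and local path connectedness this means $X$ is a Peano continuum, so arcs, simple curves, and simple closed curves are plentiful and well behaved. Note also that the Basis axiom guarantees that simple closed curves actually exist: each basic open set is, by definition of a Jordan domain, a component of the complement of a simple closed curve, and a proper basic set must exist because $X$ itself is never a Jordan domain. Hence the Jordan domain axiom has genuine content.

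The technical heart is a Schoenflies-type lemma: the closure $\overline{U}$ of every Jordan domain $U$ is homeomorphic to the closed unit disk $D^2$, with $\partial U$ carried to the bounding circle. I would prove this by an intrinsic subdivision-and-limit construction in the spirit of Moore. The basic building block is a \emph{cross-cut lemma}: given a Jordan domain $U$ bounded by a simple closed curve $C$ and two points of $C$, there is a simple arc in $\overline{U}$ joining them, with interior in $U$, that splits $U$ into two Jordan subdomains. Local path connectedness produces such an arc, and the Jordan domain axiom applied to the simple closed curve formed by this arc together with a boundary sub-arc controls the two resulting pieces. Iterating the cross-cut, I would cut $\overline{U}$ into finer and finer systems of Jordan cells whose combinatorial pattern matches a standard subdivision of the round disk, arrange that the mesh tends to $0$ using the metric and the Basis axiom, and define the homeomorphism $\overline{U}\to D^2$ as the limit of the cell-to-cell correspondence.

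Granting this lemma, the global conclusion is short. Choose any simple closed curve $C$; by the Jordan domain axiom $X-C$ has exactly two components $U$ and $V$, each a Jordan domain whose boundary is all of $C$. By the Schoenflies lemma both $\overline{U}$ and $\overline{V}$ are closed disks meeting precisely along $C$, so $X=\overline{U}\cup\overline{V}$ is two disks glued along their common boundary circle. After adjusting one of the two boundary parametrizations by a self-homeomorphism of $C$, which extends over the disk by the Alexander trick, this gluing is exactly the standard presentation of $S^2$, and the two disk homeomorphisms patch to a homeomorphism $X\to S^2$.

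The main obstacle is the Schoenflies lemma, and within it the passage to the limit: already in the plane one needs real work to see that a Jordan curve bounds a disk, and here there is no ambient plane to fall back on, so the disk structure must be manufactured from the two axioms alone. Two points demand particular care. First, the cross-cut lemma must yield honest simple closed curves at every stage, which forces one to rule out pathological separations; I expect to need the auxiliary facts that $X$ has no cut point and that no arc separates $X$, both to be extracted from the Jordan domain axiom. Second, I must ensure the nested subdivisions are coherent, each refining the previous one with matching boundary vertices, and that the mesh tends to zero on both $X$ and $D^2$, so that the limiting correspondence is a genuine homeomorphism rather than merely a combinatorial equivalence of the two cell structures.
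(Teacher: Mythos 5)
Your proposal is correct and follows essentially the same route as the paper: your cross-cut lemma and the iterated subdivision-and-limit construction are precisely the paper's Extension property (Theorem \ref{ext-prty}) together with Proposition \ref{div-domain} and the nested grids $G_n$ subordinate to a countable family of coverings, and your final assembly of $X$ from two disks glued along the common boundary curve is the paper's (implicit) concluding step after showing $\overline Q\cong[0,1]\times[0,1]$. The only cosmetic differences are that the paper controls the mesh of its subdivisions by coverings from the countable basis rather than by a metric, and organizes the subdivision via quadrilaterals and rectangular grids rather than subdivisions of the round disk.
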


The statement of Theorem \ref{moore-theory} is very close to Moore's
axiomatic description of $S^2$.
This statement can be strengthened.
One of the strongest versions is due to R.H. Bing \cite{Bing}:
a compact connected locally connected metrizable space $X$ with more than one point is
homeomorphic to the 2-sphere, provided that no embedded $S^0$ separates $X$,
and all embedded $S^1$ separate $X$.
The assumption that no pair of points (=embedded $S^0$) separates $X$
in Bing's theorem replaces the earlier assumption that
no simple curve separates $X$ \cite{Zippin,vK}.
Although these stronger characterizations of the 2-sphere imply
Moore's Theorem \ref{moore} more directly, we use Theorem \ref{moore-theory}
for two reasons: first, it is simpler to prove; second, a version
of this characterization can be stated dealing with more restricted classes of curves
(see Section \ref{s_rel}).
For the same reason, we work with the rather strong Basis axiom, although
we could have replaced it with a much weaker assumption that $X$
is second countable and
there exists at least one simple closed curve in $X$ \cite{vK}.
Another proof of Theorem \ref{moore} can be found in the book of Kuratowski
\cite{Kuratowski}.

We will first prove Theorem \ref{moore-theory} (Sections 2--4), and then deduce Theorem \ref{moore}
from it (Sections 5--6).
Many of the intermediate results should be also attributed to Moore, some
of the results are even older.
In the proof of generalized Jordan curve theorem, I have used some more recent ideas,
see e.g. \cite{Newman}.
It took me a considerable effort to recover a complete argument
presented here, and I hope that my exposition would be helpful to other people.
Some of the arguments are new.
Section \ref{s_rel} is aimed to extend Theorem \ref{moore-theory} and
make it suitable for other topological applications (cf. e.g. \cite{Tim}).

I am grateful to A. Epstein for communicating some useful references, for
carefully reading a draft of this paper, and making many useful comments and
suggestions.

\section{The extension property}

Let $X$ be a topological space satisfying the assumptions of Theorem \ref{moore-theory}.
In this section, we will prove the following important property:

\begin{thm}[Extension property]
\label{ext-prty}
  Let $D$ be a Jordan domain in $X$ bounded by a simple closed curve $C$.
  Then, for every pair of different points $a$, $b\in C$ there exists a
  simple curve connecting $a$ to $b$ and lying entirely in $D$, except for
  the endpoints.
\end{thm}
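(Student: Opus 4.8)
The plan is to reduce the statement to an \emph{accessibility} lemma plus a general fact about arcs contained in paths. First I would record two structural observations. Since $X$ is compact, Hausdorff, and has a countable basis (Basis axiom), it is second countable and regular, hence metrizable; fix a metric $d$. Since $D$ is open and connected and $X$ is locally path connected, $D$ is itself path connected (and locally path connected).

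The core reduction is that it suffices to produce a single path $\gamma\colon[0,1]\to X$ with $\gamma(0)=a$, $\gamma(1)=b$, and $\gamma(t)\in D$ for all $t\in(0,1)$. Indeed, granting such a $\gamma$, its image lies in $D\cup\{a,b\}$, so it meets $C$ only in $\{a,b\}$; and by the classical fact that in a Hausdorff space the image of any path joining two distinct points contains a \emph{simple} arc joining them, I extract a simple curve from $a$ to $b$ inside that image. This simple curve has $a,b$ as endpoints and lies in $D\cup\{a,b\}$, hence in $D$ except for its endpoints, as required. To build $\gamma$ I would concatenate three pieces: an \emph{access path} $\alpha$ from $a$ into $D$ (meaning $\alpha(0)=a$ and $\alpha((0,1])\subset D$), a path $\mu$ inside $D$ from $\alpha(1)$ to $\beta(1)$, which exists because $D$ is path connected, and the reverse of an access path $\beta$ from $b$ into $D$.

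Everything thus rests on the \emph{accessibility lemma}: every point $p\in C=\partial D$ admits an access path into $D$. This is the step I expect to be the main obstacle. The approach is as follows. Using the Basis axiom together with regularity, choose Jordan domains $U_1\supset\overline{U_2}\supset U_2\supset\cdots$, each containing $p$, with $\operatorname{diam}(U_n)\to 0$ and $\bigcap_n U_n=\{p\}$. Because the Jordan domain axiom gives $\partial D=C\ni p$, every $U_n$ meets $D$, so $D\cap U_n\neq\emptyset$; being open in a locally path connected space, each $D\cap U_n$ has open path components. The difficulty is that $D\cap U_n$ may have many path components, and I must select points $x_n\in D\cap U_n$ together with arcs $A_n\subset D\cap U_{n-1}$ joining $x_n$ to $x_{n+1}$ in a \emph{coherent} way, so that $x_n\to p$ and $\operatorname{diam}(A_n)\to 0$; concatenating the $A_n$ and adjoining the limit value $p$ at $t=1$ then yields the desired access path.

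To carry out the coherent selection --- the genuinely delicate point --- I would exploit that $C$ is not merely the boundary of $D$ but a \emph{simple closed curve}, hence locally connected: for each $n$ there is a sub-neighborhood on which $C$ is a single small arc through $p$, which should let me show that $p$ lies in the closure of a \emph{distinguished} path component $P_n$ of $D\cap U_n$ and that $P_{n+1}$ can be chosen inside $P_n$. Choosing $x_n\in P_n$ and connecting $x_n$ to $x_{n+1}$ within $P_n\subset D\cap U_{n-1}$ produces arcs of diameter at most $\operatorname{diam}(U_{n-1})\to 0$, and continuity of $\alpha$ at the endpoint value $p$ follows since the tails of the concatenation are trapped in the shrinking $U_n$. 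Once accessibility is established at $a$ and, by symmetry, at $b$, the assembly of $\gamma$ and the extraction of the simple curve complete the proof.
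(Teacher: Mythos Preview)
Your overall architecture coincides with the paper's: prove an accessibility lemma (every boundary point of a Jordan domain is the endpoint of a simple path otherwise lying in the domain), connect the two access paths by a path in $D$, and simplify. The paper carries this out exactly as you describe in Propositions~2.8--2.9.

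The gap is precisely where you flag it: the ``coherent selection'' of nested components $P_{n+1}\subset P_n\subset D\cap U_n$ with $p\in\overline{P_n}$. Your proposed tool---that $C$ is locally an arc through $p$---is not enough. Even when $C\cap U_n$ is a single arc, the intersection $D\cap U_n$ can have many components, and nothing in local connectedness of $C$ alone tells you which component has $p$ in its closure, nor why such components can be chosen nested as $n$ grows. The difficulty is that $\partial U_n$ may meet $C$ in a complicated (e.g.\ Cantor) set, so you cannot read off the components of $D\cap U_n$ from the picture of $C$ near $p$.

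The paper fills this gap with a nontrivial lemma (Proposition~\ref{Jordan-intersect}): if $U$ and $V$ are Jordan domains with $x\in\partial U\cap V$, then some connected component $W$ of $U\cap V$ is itself a Jordan domain with $x\in\partial W$. Its proof constructs $\partial W$ by replacing certain arcs of $\partial U$ by ``penetrating arcs'' of $\partial V$, and relies essentially on the Jordan domain axiom through the preparatory results that no proper arc of a simple closed curve separates $X$ (Proposition~\ref{arc-nosep}) and that crossing diagonals in a Jordan domain must intersect (Proposition~\ref{crossing}). Once this is in hand, your nested $P_n$ are produced by iterating: take $P_1$ a Jordan-domain component of $D\cap U_1$ with $p\in\partial P_1$, then $P_2$ a Jordan-domain component of $P_1\cap U_2$ with $p\in\partial P_2$, and so on. This is exactly Proposition~\ref{Un}, and your access path then follows as you outline. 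So the missing idea is not accessibility per se, but the structural fact that components of intersections of Jordan domains are again Jordan domains touching the prescribed boundary point.
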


Let us first note that any Jordan domain is an open set, as a connected component
of the complement to a compact set.
An open connected set in a locally path connected space is path connected.
Thus all Jordan domains are path connected.

\begin{prop}
\label{interior}
  Every Jordan domain coincides with the interior of its closure.
\end{prop}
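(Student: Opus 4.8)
The plan is to prove the two inclusions $D \subseteq \operatorname{int}(\overline D)$ and $\operatorname{int}(\overline D) \subseteq D$ separately, where $D$ is a Jordan domain bounded by a simple closed curve $C$. The first inclusion is immediate: we have already observed that every Jordan domain is open, and of course $D \subseteq \overline D$, so $D$ is an open set contained in $\overline D$ and hence $D \subseteq \operatorname{int}(\overline D)$. The real content of the proposition is the reverse inclusion, and here I would begin by pinning down the closure of $D$ precisely. By the Jordan domain axiom, the boundary $\partial D$ equals $C$; since $D$ is open, this gives $\overline D = D \cup C$.

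For the inclusion $\operatorname{int}(\overline D) \subseteq D$, I would argue by contradiction. Let $D'$ denote the \emph{other} Jordan domain bounded by $C$, so that $X - C = D \sqcup D'$ is the disjoint union of the two open components. Suppose some point $p$ lies in $\operatorname{int}(\overline D)$ but not in $D$; then $p \in \overline D \setminus D = C$. The key observation is that the Jordan domain axiom asserts not only that $\partial D = C$ but equally that $\partial D' = C$; in particular $p \in \partial D' \subseteq \overline{D'}$, so every neighborhood of $p$ meets $D'$. On the other hand, membership $p \in \operatorname{int}(\overline D)$ furnishes a neighborhood $U$ of $p$ contained in $\overline D = D \cup C$, and this $U$ is disjoint from $D'$, since $D'$ is disjoint from both $D$ and $C$. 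This contradiction shows that no point of $C$ lies in $\operatorname{int}(\overline D)$, whence $\operatorname{int}(\overline D) \subseteq D$, and together with the first inclusion this yields the claim.

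The argument is short, and I do not anticipate any serious computational obstacle; the one point that genuinely must be invoked is the second assertion of the Jordan domain axiom, namely that the \emph{full} curve $C$ --- and not merely some proper subset of it --- is the boundary of \emph{each} of the two domains. It is exactly this symmetry between $D$ and $D'$ that forces every point of $C$ to be a limit of points of $D'$, and therefore to fail to be interior to $\overline D$. Were one to know only that $\partial D = C$ without the corresponding statement for $D'$, the conclusion could break down, so this is the place where the hypotheses are really used.
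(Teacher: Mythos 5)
Your proof is correct and follows essentially the same route as the paper's: both argue by contradiction, using the fact that the Jordan domain axiom makes $C$ the boundary of the \emph{other} Jordan domain as well, so a point of $C$ interior to $\overline D$ would have a neighborhood avoiding that other domain. Your write-up merely spells out a few steps (e.g.\ $\overline D = D \cup C$ and the identification of $D'$ with $X-\overline D$) that the paper leaves implicit.
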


\begin{proof}
  Let $D$ be a Jordan domain bounded by $C$.
  Since $D$ is an open set, we have $D\subseteq {\rm Interior}(\overline D)$.
  Suppose that there is a point $x$ in the interior of $\overline D$
  such that $x\not\in D$.
  Then we must have $x\in\d D$.
  By the Jordan domain axiom, $\d D=C$ coincides with
  the boundary of the other Jordan domain $X-\overline D$ bounded by $C$.
  Thus the point $x$ is also on the boundary of $X-\overline D$.
  A contradiction.
\end{proof}

\begin{prop}
  \label{arc-nosep}
  No proper arc of a simple closed curve separates $X$.
\end{prop}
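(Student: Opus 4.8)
The plan is to exploit the Jordan domain axiom directly: write $X$ as the disjoint union of $C$ and its two Jordan domains, and then reassemble the complement of the arc out of connected pieces. Let $C=\gamma(S^1)$ be the simple closed curve and let $A=\gamma(J)$ be the proper arc, where $J\subset S^1$ is a proper closed subarc. By the Jordan domain axiom, $X-C$ has exactly two connected components $D_1$ and $D_2$, so $X=C\cup D_1\cup D_2$ is a disjoint decomposition, and therefore
\[
  X-A \;=\; (C-A)\,\cup\,D_1\,\cup\,D_2 .
\]
Writing $E=C-A=\gamma(S^1-J)$, the goal is to show this set is connected.

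First I would verify that $E$ is nonempty and connected. Since $A$ is a \emph{proper} arc, $J\ne S^1$, so $S^1-J$ is a nonempty open arc of the circle, hence connected; as the image of a connected set under the continuous map $\gamma$, the set $E$ is connected and nonempty. This is the only place where properness of $A$ is used, and it is exactly what the hypothesis buys us.

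Next I would glue $E$ to each domain. Because the Jordan domain axiom asserts that $\d D_i=C$ for $i=1,2$, we have $\overline{D_i}=D_i\cup C$, and in particular $E\subseteq C\subseteq\overline{D_i}$. Invoking the standard fact that a set lying between a connected set and its closure is again connected (here $D_i\subseteq D_i\cup E\subseteq\overline{D_i}$), each union $D_i\cup E$ is connected. Finally, $X-A=(D_1\cup E)\cup(D_2\cup E)$ is a union of two connected sets whose intersection contains the nonempty set $E$, and is therefore connected; so $A$ does not separate $X$.

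I do not expect a genuine obstacle here: all the substance has already been packaged into the Jordan domain axiom, and specifically into its clause that the boundary of each Jordan domain is the \emph{whole} of $C$. The only points needing care are the two elementary topological lemmas invoked (connectedness is preserved when passing to any set between $S$ and $\overline S$, and a union of connected sets sharing a common point is connected), neither of which is difficult. The role of properness is simply to guarantee $E\ne\emptyset$, which is what lets the two half-unions be stitched together.
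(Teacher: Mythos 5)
Your proof is correct, but it follows a genuinely different route from the paper's. The paper argues by contradiction via a component analysis: assuming $D_1$, $D_2$ are distinct components of $X-A$, it notes that the connected set $C-A$ lies in one of them, say $D_2$; the other component $D_1$ is then disjoint from $C$ and hence contained in some Jordan domain $D_1'$ bounded by $C$; since $\partial D_1\subseteq A$ is disjoint from $D_1'$, the set $D_1$ is clopen in the connected set $D_1'$, forcing $D_1=D_1'$ --- but then the Jordan domain axiom gives $\partial D_1=C$, contradicting $\partial D_1\subseteq A\subsetneq C$. You instead prove connectedness of $X-A$ directly, decomposing it as $(D_1\cup E)\cup(D_2\cup E)$ with $E=C-A$ and the $D_i$ the two Jordan domains bounded by $C$, then invoking two elementary lemmas (any set between a connected set and its closure is connected; a union of connected sets sharing a point is connected). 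Both arguments hinge on the same clause of the Jordan domain axiom --- that the boundary of each $D_i$ is the \emph{whole} curve $C$ --- but they use it differently: the paper uses it to produce the final contradiction, while you use it to conclude $E\subseteq\overline{D_i}$ so the pieces can be glued. Your version is shorter, avoids examining an arbitrary component of $X-A$, and yields the connectedness of the complement constructively; the paper's clopen-component pattern, on the other hand, is a template it reuses in later arguments such as Proposition \ref{div-domain}, which is likely why it is phrased that way there.
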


\begin{proof}
  Assume the contrary: $A$ is a proper arc of a simple closed curve $C$,
  and $D_1$, $D_2$ are different components of $X-A$.
  Since the set $C-A$ is connected, it lies in some component of $X-A$;
  suppose e.g. that $C-A\subset D_2$.
  Then $D_1$ is disjoint from $C$, therefore, it lies in some Jordan
  domain $D'_1$ bounded by $C$.
  The boundary of $D_1$ is disjoint from $D'_1$, hence
  $D_1$ must coincide with $D'_1$.
  However, $\d D_1\subseteq A$, a contradiction with the Jordan
  domain axiom, which says, in particular, that $\d D_1=C$.
\end{proof}

\begin{prop}
\label{subdomain}
 Let $U$ be a Jordan domain, and $C$ a simple closed curve in $\overline U$.
 Then there exists a Jordan domain $V$ bounded by $C$ such that $V\subseteq U$.
\end{prop}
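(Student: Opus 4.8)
The plan is to exploit both complementary domains of $U$ together with the two domains bounded by $C$, and to show that one of the latter is trapped inside $U$. First I would fix notation for all four domains. Let $\Gamma=\partial U$ be the simple closed curve bounding $U$. By the Jordan domain axiom the complement $X-\Gamma$ has exactly two components, namely $U$ itself and a second Jordan domain $U'$ (the ``exterior''), each having boundary exactly $\Gamma$; thus $X=U\sqcup\Gamma\sqcup U'$ and $\overline U=U\cup\Gamma$. Applying the Jordan domain axiom to $C$, write $X-C=V_1\sqcup V_2$ for the two Jordan domains it bounds, each with $\partial V_1=\partial V_2=C$.

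Next I would locate the exterior domain $U'$ relative to $C$. Since $C\subseteq\overline U=U\cup\Gamma$, the curve $C$ is disjoint from $U'$, so $U'\subseteq X-C=V_1\sqcup V_2$. Because $U'$ is connected (a Jordan domain) and $V_1,V_2$ are disjoint open sets, $U'$ must lie entirely in one of them; say $U'\subseteq V_1$. Passing to closures and using $\partial V_1=C$, so that $\overline{V_1}=V_1\cup C$, I obtain $\Gamma=\partial U'\subseteq\overline{U'}\subseteq\overline{V_1}=V_1\cup C$.

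Finally I would show $V_2\subseteq U$. The set $V_2$ is disjoint from $V_1$ and from $C$, hence from $V_1\cup C\supseteq\Gamma$, and also from $U'\subseteq V_1$. Since $X=U\sqcup\Gamma\sqcup U'$ and $V_2$ meets neither $\Gamma$ nor $U'$, it follows that $V_2\subseteq U$. Thus $V=V_2$ is a Jordan domain bounded by $C$ and contained in $U$, as required.

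The argument is in the end mostly bookkeeping with the partition $X=U\sqcup\Gamma\sqcup U'$, so there is no serious obstacle. The one genuinely load-bearing idea is to bring in the \emph{exterior} domain $U'$ and use its connectedness to confine it to a single side of $C$; the one step that requires care is the closure inclusion $\overline{U'}\subseteq V_1\cup C$, which is precisely where the Jordan domain axiom (in the form $\partial V_1=C$) is used.
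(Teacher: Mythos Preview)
Your proof is correct and follows essentially the same route as the paper: both arguments exploit that the exterior domain $U'=X-\overline U$ is connected and disjoint from $C$, hence lies entirely in one of the two Jordan domains bounded by $C$, forcing the other one inside $U$. The only cosmetic difference is in the last step: the paper gets $V\subseteq\overline U$ and then invokes Proposition~\ref{interior} (that a Jordan domain equals the interior of its closure) to conclude $V\subseteq U$, whereas you bypass that proposition by directly checking $V_2\cap\Gamma=\emptyset$ via the closure inclusion $\Gamma\subseteq\overline{U'}\subseteq\overline{V_1}$.
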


\begin{proof}
  The Jordan domain $X-\overline U$ is a connected set disjoint from $C$.
  Therefore, it lies in one of the two Jordan domains bounded by $C$.
  It follows that the other Jordan domain bounded by $C$ (call it $V$)
  is contained in $\overline U$.
  Since $V$ is open, it is also contained in the interior of $\overline U$,
  which coincides with $U$ by Proposition \ref{interior}.
\end{proof}

\begin{lemma}[Simplification of paths]
\label{simplify-path}
Let $\alpha:[0,1]\to X$ be a continuous path in
$X$ such that $\alpha(0)\ne \alpha(1)$.
Then there exists a simple path $\beta:[0, 1] \to X$ such that
$\beta(0) = \alpha(0)$, $\beta(1) = \alpha(1)$, and $\beta[0, 1] \subseteq \alpha[0, 1]$.
\end{lemma}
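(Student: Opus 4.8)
The plan is to build $\beta$ by \emph{cutting loops} out of $\alpha$. The elementary move is this: if $\alpha(s)=\alpha(t)$ for some $s<t$, then deleting the open interval $(s,t)$ from the domain and reparametrising the remaining set $[0,s]\cup[t,1]$ monotonically onto $[0,1]$ produces a continuous path with the same endpoints and image contained in $\alpha[0,1]$; continuity at the junction is exactly the condition $\alpha(s)=\alpha(t)$. This suggests the following notion. Call a closed set $S\subseteq[0,1]$ a \emph{keep-set} if $0,1\in S$ and, for every bounded complementary interval (``gap'') $(a,b)$ of $S$, one has $\alpha(a)=\alpha(b)$. Collapsing the gaps of a keep-set $S$ defines a monotone continuous surjection $\pi\colon S\to[0,1]$ that identifies the two endpoints of each gap, and since $\alpha(a)=\alpha(b)$ at every gap, $\alpha|_S$ factors as $\alpha|_S=\beta\circ\pi$ for a continuous map $\beta\colon[0,1]\to X$ with $\beta(0)=\alpha(0)$, $\beta(1)=\alpha(1)$, and $\beta[0,1]\subseteq\alpha[0,1]$. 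The whole domain $[0,1]$ is trivially a keep-set, so keep-sets exist.

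To make $\beta$ \emph{simple} I would choose $S$ as small as possible. Order keep-sets by inclusion and apply Zorn's lemma to obtain a minimal keep-set $S$. Minimality removes all superfluous self-intersections: if $\alpha(u)=\alpha(u')$ with $u<u'$ in $S$ and $S\cap(u,u')\ne\emptyset$, then $S\setminus(u,u')$ is a strictly smaller keep-set, since its only new gap is $(u,u')$ and its endpoints agree, contradicting minimality. Hence every coincidence $\alpha(u)=\alpha(u')$ in $S$ occurs precisely at the two endpoints of a single gap, that is, exactly where $\pi$ already identifies $u$ and $u'$; this is exactly injectivity of $\beta$. Since $\alpha(0)\ne\alpha(1)$, the two endpoints of $\beta$ are distinct, so $\beta$ is the desired simple path. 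The continuity and quotient properties of $\pi$, and the fact that collapsing the gaps of a compact $S$ yields an arc homeomorphic to $[0,1]$, are routine.

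The main obstacle is verifying the hypothesis of Zorn's lemma: the intersection $S=\bigcap_\lambda S_\lambda$ of a chain of keep-sets decreasing under inclusion must again be a keep-set. Clearly $S$ is closed and contains $0,1$; the content is that every gap $(a,b)$ of $S$ still satisfies $\alpha(a)=\alpha(b)$. Here the Hausdorff hypothesis is essential, as it makes each fibre $\alpha^{-1}(x)$ closed, so that suprema and infima of preimages are attained. If $(a,b)$ is already a gap of some $S_\lambda$ we are done by validity of that $S_\lambda$; otherwise each $S_\lambda$ meets $(a,b)$, and I would track, for each $\lambda$, the right endpoint $a_\lambda$ of the gap of $S_\lambda$ having $a$ as its left endpoint, so that $\alpha(a)=\alpha(a_\lambda)$. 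Along the chain these points increase monotonically, and a nested-compactness argument applied to the decreasing closed sets $S_\lambda\cap[a,b]$ shows that their supremum lies in $S\cap(a,b]$; since $S\cap(a,b)=\emptyset$ this supremum is $b$, and because $\alpha(a)=\alpha(a_\lambda)$ for all $\lambda$ while $a_\lambda\to b$, continuity forces $\alpha(a)=\alpha(b)$. The only fiddly point is the boundary case in which $S_\lambda$ accumulates at $a$ from the right and no such gap exists; this is again dispatched using closedness of the fibres together with compactness of the nested sets $S_\lambda\cap[a,b]$.
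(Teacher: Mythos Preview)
Your approach is correct in outline and genuinely different from the paper's. The paper proceeds constructively: it greedily picks a longest subinterval $I_1=[a_1,b_1]$ with $\alpha(a_1)=\alpha(b_1)$, then a longest $I_2$ essentially disjoint from $I_1$ with the same property, and so on; the monotone map $\xi$ collapsing all the $I_n$ yields $\beta$, and simplicity is forced by the maximality built into each choice. No choice principle is invoked, and $\beta$ is explicit. Your Zorn argument trades this explicitness for a very clean simplicity check: once $S$ is minimal, injectivity of $\beta$ is a one-liner.

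Two places need tightening. First, your chain-condition argument via ``the right endpoint $a_\lambda$ of the gap of $S_\lambda$ at $a$'' is shaky: there is no reason $\sup_\lambda a_\lambda$ should lie in $S$, and the ``fiddly'' accumulation case is not actually dispatched. A direct argument is cleaner: if $\alpha(a)\neq\alpha(b)$, choose $\varepsilon>0$ with $\alpha([a,a+\varepsilon])$ and $\alpha([b-\varepsilon,b])$ in disjoint open sets (Hausdorff); the nested compacta $S_\lambda\cap[a+\varepsilon,b-\varepsilon]$ have empty intersection, so some $S_{\lambda_0}$ misses $[a+\varepsilon,b-\varepsilon]$, and since $a,b\in S_{\lambda_0}$ there is a gap $(c,d)$ of $S_{\lambda_0}$ with $c\in[a,a+\varepsilon)$, $d\in(b-\varepsilon,b]$, contradicting $\alpha(c)=\alpha(d)$. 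Second, the claim that collapsing the gaps of \emph{any} keep-set gives a copy of $[0,1]$ is not literally true without further comment (closed gaps can share endpoints at isolated points of $S$, and then the identifications chain up). You only need it for the \emph{minimal} $S$, and there it is immediate: an isolated point $s\in S\cap(0,1)$ bounded by gaps $(a,s)$ and $(s,b)$ would give a strictly smaller keep-set $S\setminus\{s\}$, so the closed gaps of a minimal $S$ are pairwise disjoint and the quotient is an arc.
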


We will call $\beta$ a {\em simplification} of $\alpha$.
Of course, in general, a simplification is not unique.
This lemma works in greater generality: $X$ can be replaced with any
Hausdorff topological space.

\begin{proof}
We say that two subintervals of $[0,1]$ are {\em essentially disjoint}
if their intersection is the empty set or a common endpoint.
Let $I_1 = [a_1, b_1]$ be a longest interval in $[0, 1]$ such that
$\alpha(a_1) = \alpha(b_1)$
(it is possible that $a_1 = b_1$; this happens if $\alpha$ was already simple).
Define $I_n = [a_n, b_n]$ inductively as a longest interval in
$[0, 1]$ essentially disjoint from $I_1$, $\dots$, $I_{n-1}$
such that $\alpha(a_n) = \alpha(b_n)$.
If the intervals $I_n$ thus defined contain more than one point,
then they are actually pair-wise disjoint,
due to the maximality of length assumption.
There is a continuous map $\xi:[0,1]\to [0,1]$ whose non-trivial fibers are precisely
the intervals $I_n$.
Define the map $\beta:[0,1]\to X$ as follows: if $\xi^{-1}(s)$ is
a single point $t$, then we set $\beta(s)=\alpha(t)$.
If $\xi^{-1}(s)=[a_n,b_n]$, then we set $\beta(s)=\alpha(a_n)=\alpha(b_n)$.
Clearly, $\beta$ is a continuous path.
It remains to prove that $\beta$ is simple.
Suppose that $\beta(s)=\beta(s')$ for $s\ne s'$.
Let $I$ be the smallest interval containing both $\xi^{-1}(s)$ and $\xi^{-1}(s')$.
For every $n$, we have either $I_n\subset I$ or $I_n\cap I=\emptyset$
(i.e. an endpoint of $I$ cannot be in the interior of $I_n$).
Choose the maximal $n$, for which $I_n\cap I=\emptyset$
(set $n=0$ if all $I_n$ are in $I$).
We must have $I_{n+1}\subset I$, which contradicts the maximality of $I_{n+1}$.
\end{proof}

\begin{prop}[Approximate extension property]
  \label{appr-ext}
  Let $U$ be an open connected subset of $X$.
  For any two boundary points $a,b\in\d U$ and any connected
  neighborhoods $V_a\ni a$,
  $V_b\ni b$, there exists a simple path $\alpha:[0,1]\to\overline U$
  such that $\alpha(0)\in\d U\cap V_a$, $\alpha(1)\in\d U\cap V_b$,
  and $\alpha(0,1)\subset U$.
\end{prop}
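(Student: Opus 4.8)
The plan is to build $\alpha$ in three stages --- reach $\partial U$ near $a$ coming from inside $U$, cross $U$, and reach $\partial U$ near $b$ coming from inside $U$ --- then straighten the result into a simple path. First I would use local path connectivity to pick path-connected open neighborhoods $W_a\subseteq V_a$ and $W_b\subseteq V_b$ of $a$ and $b$, shrinking them (using that $X$ is Hausdorff and $a\ne b$) so that $W_a\cap W_b=\emptyset$. Since $a\in\partial U$, the set $W_a\cap U$ is nonempty; I fix $a'\in W_a\cap U$ and a path $\delta_a$ in $W_a$ from $a$ to $a'$. The crucial device is the \emph{last exit} from the complement of $U$: put $t^*=\sup\{t:\delta_a(t)\notin U\}$. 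As $X\setminus U$ is closed and $\delta_a(1)=a'\in U$, the supremum is attained with $t^*<1$, the point $p_a:=\delta_a(t^*)$ lies in $W_a\setminus U$, and $\delta_a(t)\in U$ for every $t\in(t^*,1]$. Hence $p_a\in\partial U\cap V_a$ and is approached from within $U$; reparametrizing $\delta_a|_{[t^*,1]}$ yields $\sigma_a\colon[0,1]\to\overline U$ with $\sigma_a(0)=p_a$, $\sigma_a(1)=a'$, and $\sigma_a((0,1])\subset U$. The same construction gives $\sigma_b$, a boundary point $p_b\in\partial U\cap V_b$, and $b'\in U$; disjointness of $W_a$ and $W_b$ guarantees $p_a\ne p_b$.

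Next, since $U$ is open and connected in a locally path connected space it is path connected, so I can join $a'$ to $b'$ by a path $\gamma$ lying in $U$. Concatenating $\sigma_a$, then $\gamma$, then the reverse of $\sigma_b$ produces a path $\alpha_0\colon[0,1]\to\overline U$ from $p_a$ to $p_b$ whose image meets $\partial U$ at most at its endpoints; precisely, $\alpha_0[0,1]\subseteq U\cup\{p_a,p_b\}$.

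Finally I would apply the Simplification Lemma (Lemma \ref{simplify-path}) to $\alpha_0$, whose endpoints are distinct, obtaining a simple path $\beta$ with $\beta(0)=p_a$, $\beta(1)=p_b$, and $\beta[0,1]\subseteq\alpha_0[0,1]\subseteq U\cup\{p_a,p_b\}$. Simplicity is exactly what promotes the concatenation to the curve we want: for $t\in(0,1)$ we have $\beta(t)\ne p_a$ and $\beta(t)\ne p_b$, so $\beta(t)\in U$. Thus $\beta$ has endpoints in $\partial U\cap V_a$ and $\partial U\cap V_b$ with $\beta(0,1)\subset U$, and we set $\alpha=\beta$.

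The step I expect to be the main obstacle is ensuring simultaneously that the path touches $\partial U$ only at its two endpoints and that these endpoints are genuine boundary points lying in the prescribed neighborhoods. The last-exit supremum resolves this at each end by producing boundary points that are approached from inside $U$, and the simplification lemma then finishes the job for free, since image containment together with simplicity automatically keeps the interior of $\beta$ out of $\{p_a,p_b\}$ and hence inside $U$. The only other thing to watch is that the two endpoints be distinct, which is why I separate $a$ and $b$ by disjoint neighborhoods from the start. It is worth noting that the argument uses only local path connectivity, connectedness of $U$, and the Hausdorff property; neither compactness nor the two axioms of Theorem \ref{moore-theory} play any role here.
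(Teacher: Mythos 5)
Your proof is correct and takes essentially the same route as the paper's: build a path entering $U$ near $a$, crossing $U$, and exiting near $b$, trim it by a maximality (last-exit) argument so that it meets $\d U$ only at its two endpoints, and then apply Lemma \ref{simplify-path}, noting that simplicity keeps the interior of the path inside $U$. The only difference is organizational --- you perform the last-exit trimming at each end separately and shrink to disjoint path-connected neighborhoods $W_a$, $W_b$, which guarantees the two endpoints are distinct before invoking the simplification lemma, a detail the paper's proof (which trims one concatenated path over a single maximal interval) leaves implicit.
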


\begin{proof}
  Since $U$ is open and connected, it is path connected.
  Therefore, there exists a continuous path (not necessarily simple) in $U$
  connecting some point $a'$ of $V_a\cap U$ to some point $b'$ of $V_b\cap U$.
  There is also a continuous path in $V_a$ connecting $a$ to $a'$,
  and a continuous path in $V_b$ connecting $b'$ to $b$.
  Let $\tilde\alpha:[0,1]\to X$ be the concatenation of these three paths.
  The path $\tilde\alpha$ connects points $a$ and $b$ in $X$ but
  does not necessarily lie in $U$.
  We can choose a parameterization, however, so that $\alpha(t)$
  lies in $V_a$ for $t$ close to $0$, in $V_b$ for $t$ close to 1,
  and in $U$ for all other values of $t$.
  Let $(t_0,t_1)$ be the maximal interval such that $\tilde\alpha(t_0,t_1)
  \subset U$.
  Reparameterize the restriction of $\tilde\alpha$ to $[t_0,t_1]$ by $[0,1]$,
  and simplify this path (by Lemma \ref{simplify-path}).
  The path $\alpha$ thus obtained satisfies the desired properties.
\end{proof}

\begin{prop}
  \label{div-domain}
 Consider a Jordan domain $D$ and a pair of different points $a$, $c$
 in $\d D$.
 Let $ac$ be a simple curve connecting $a$ to $c$ and lying entirely in $D$,
 except for the endpoints.
 Then $ac$ divides $D$ into exactly two components; these
 components are Jordan domains.
\end{prop}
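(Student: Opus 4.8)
The plan is to cut $D$ by the two simple closed curves obtained from the chord $\gamma:=ac$ together with each of the arcs into which $a$ and $c$ split $C$. Write $C=C_1\cup C_2$ with $C_1\cap C_2=\{a,c\}$ and $C_1\neq C_2$, and put $J_i=\gamma\cup C_i$. Since $\gamma(0,1)\subseteq D$ while $a,c\in C$, each $J_i$ is a simple closed curve lying in $\overline D$, so Proposition~\ref{subdomain} supplies a Jordan domain $V_i$ bounded by $J_i$ with $V_i\subseteq D$; write $W_i=X-\overline{V_i}$ for the complementary Jordan domain bounded by $J_i$. Note $X-\overline D\subseteq W_1\cap W_2$, as $X-\overline D$ is connected and disjoint from $J_1\cup J_2\subseteq\overline D$. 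The whole proposition reduces to the single assertion $D-\gamma=V_1\sqcup V_2$: granting it, $V_1$ and $V_2$ are nonempty, open, connected, disjoint and exhaust $D-\gamma$, hence are precisely its two components, and each is a Jordan domain by construction.

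I would first prove $V_1\cap V_2=\emptyset$. The set $V_2$ is connected and misses $J_1$ (it avoids $\gamma\subseteq\partial V_2$, and avoids $C_1\subseteq C$ because $V_2\subseteq D$), so $V_2\subseteq V_1$ or $V_2\subseteq W_1$. The first alternative is impossible: one checks $\overline{V_1}\cap C=C_1$, so $V_2\subseteq V_1$ would force the nonempty arc $C_2-\{a,c\}\subseteq\overline{V_2}\cap C$ into $C_1$, contradicting $C_1\cap C_2=\{a,c\}$. Hence $V_2\subseteq W_1$ and $V_1\cap V_2=\emptyset$. As $V_1,V_2\subseteq D-\gamma$ is clear, it remains to prove the reverse inclusion, i.e. that $E:=(D-\gamma)-(V_1\cup V_2)$ is empty.

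A short computation gives $\overline{V_1}\cap\overline{V_2}=\gamma$ and $\overline{V_1}\cup\overline{V_2}=\overline D-E$, whence $W_1\cap W_2=X-(\overline{V_1}\cup\overline{V_2})=E\sqcup(X-\overline D)$; since $X-\overline D$ is connected and nonempty, $E=\emptyset$ is \emph{equivalent} to $W_1\cap W_2$ being connected. On the other hand $W_1\cup W_2=X-\gamma$, which \emph{is} connected by Proposition~\ref{arc-nosep}, $\gamma$ being a proper arc of $J_1$. So the task is to promote connectedness of the union to connectedness of the intersection, and this is the step I expect to be the main obstacle.

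To carry it out I would argue by contradiction: let $K$ be a component of the open set $E$. Then $K$ is open (local connectivity), $\partial K\subseteq\partial(D-\gamma)\subseteq C\cup\gamma$, and $\partial K\cap\gamma(0,1)\neq\emptyset$, for otherwise $\partial K\subseteq C$ would make $K$ clopen in the connected set $D$, forcing $K=D$ and $\gamma(0,1)=\emptyset$, absurd. At a frontier point $y\in\partial K\cap(C_1-\{a,c\})$, choose by the Basis axiom a Jordan-domain neighbourhood $N\ni y$ so small that $\overline N\cap\gamma=\emptyset$ (possible as $\gamma$ is closed and $y\notin\gamma$) and $N\subseteq W_2$ (possible as $C_1-\{a,c\}\subseteq W_2$). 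Then $N\cap D\subseteq(D-\gamma)\cap W_2=V_1\sqcup E$, while $N$ meets both $V_1$ (since $y\in C_1\subseteq\partial V_1$) and $E$ (since $y\in\overline K$), so $N\cap D$ is disconnected. This contradicts the expected fact that for sufficiently small Jordan-domain neighbourhoods $N\cap D$ is connected, a local connectivity of $D$ at the boundary point $y$; an analogous local statement along $\gamma$ (a simple arc locally separating a small Jordan-domain neighbourhood into exactly two sides) disposes of frontier points on $\gamma(0,1)$. Establishing these local separation facts is where the real work lies, and I would derive them from the approximate extension property (Proposition~\ref{appr-ext}) together with Proposition~\ref{arc-nosep}, applied inside the Jordan-domain neighbourhoods furnished by the Basis axiom. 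With $E=\emptyset$ in hand, $D-\gamma=V_1\sqcup V_2$ and the proposition follows.
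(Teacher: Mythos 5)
Your reduction is sound and, up to that point, coincides with the paper's own argument: your $V_1,V_2$ bounded by $J_i=\gamma\cup C_i$ and obtained from Proposition \ref{subdomain} are exactly the paper's $D_{abc},D_{adc}$, your disjointness argument is correct, and both proofs come down to showing that $D-\gamma$ has no third component. The genuine gap is precisely at the step you flag as ``where the real work lies'': the two local facts you defer are not available, and one is false as stated. The claim that $N\cap D$ is connected for \emph{all} sufficiently small Jordan-domain neighbourhoods $N$ of a boundary point $y\in\d D$ fails already for Jordan domains in the standard sphere whose boundary oscillates wildly near $y$ (e.g.\ like the graph of $x\sin(1/x)$): round disks of any small radius then meet $D$ in infinitely many components. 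The weaker existence version (\emph{some} arbitrarily small $N$ with $N\cap D$ connected) is true in $S^2$, but only as a consequence of the ambient Schoenflies theorem --- exactly the kind of statement this whole chain of propositions is building toward, so invoking it here is circular; in the abstract space $X$ nothing proved so far yields it (Proposition \ref{Jordan-intersect}, the paper's substitute, is deliberately stated for a single \emph{component} of $U\cap V$, and in any case it depends on Proposition \ref{crossing}, which depends on the very proposition you are proving). The second deferred fact --- that a simple arc cuts a small Jordan-domain neighbourhood into \emph{exactly two} sides --- is Proposition \ref{div-domain} itself in miniature: the entire difficulty is the upper bound ``at most two'', and Propositions \ref{appr-ext} and \ref{arc-nosep} only produce connections and curves, never an upper bound on the number of components. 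Note also that this hand-waved case is the one you cannot avoid: what you actually proved is $\d K\cap\gamma(0,1)\ne\emptyset$, whereas the existence of a frontier point on $C_1-\{a,c\}$ (your detailed case) was never established (it does follow from Proposition \ref{arc-nosep}, since $\d K\subseteq\gamma$ would make $K$ a component of the connected set $X-\gamma$, hence equal to $X-\gamma\not\subseteq D$).

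For contrast, the paper rules out a third component $\Omega$ by a global construction rather than a local separation lemma: it produces $e\in\d\Omega\cap\d D$ (via Proposition \ref{arc-nosep}) and $f\in\d\Omega\cap\gamma$, applies Proposition \ref{appr-ext} twice --- once inside $\Omega$, once inside $D_{abc}$ --- to build a simple closed curve in $\overline D$ whose intersection with $\gamma$ is a single arc, and then applies Proposition \ref{subdomain} to get a Jordan domain $D_{ef}\subseteq D$ bounded by it. Since $\d D_{ef}$ meets the open sets $\Omega$ and $D_{abc}$, so does $D_{ef}$; being connected and contained in $D$, it must then meet $\gamma$, which forces $\d D_{ef}\cap\gamma$ to have more than one component, contradicting the construction. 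Some argument of this kind --- using the hypothetical extra component itself to build a curve and derive a contradiction, rather than appealing to local two-sidedness --- is what your proposal is missing.
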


\begin{proof}
 Choose points $b$ and $d$ in such a way that the four points $a$, $b$,
 $c$, $d$ lie on the simple closed curve $\d D$ in this cyclic order.
 Define the simple closed curves $abc$ and $adc$ as the unions of $ac$
 with the arcs of $\d D$ bounded by $\{a,c\}$ and containing $b$ and $d$,
 respectively.
 Let $D_{abc}$ be the Jordan domain bounded by $abc$ and contained in $D$
 (it exists by Proposition \ref{subdomain}).
 Similarly, define $D_{adc}$ as the Jordan domain bounded by $adc$ and
 contained in $D$.
 Since $D_{abc}$ is contained in $D-ac$, but $\d D_{abc}$ is disjoint from
 $D-ac$, the domain $D_{abc}$ is a connected component of $D-ac$.
 Similarly, $D_{adc}$ is also a connected component of $D-ac$.
 We need to prove that there are no other connected components of $D-ac$.
 Assume the contrary: $\Omega$ is yet another connected component.

 The curve $ac$ is a proper arc of the simple closed curve $abc$,
 thus $ac$ cannot contain the boundary of $\Omega$ by Proposition
 \ref{arc-nosep}.
 It follows that there exists a point $e\in\d\Omega\cap\d D$
 different from $a$ and $c$.
 To fix the ideas, assume that $e\in abc$.
 On the other hand, we cannot have $\d\Omega\subseteq\d D$;
 otherwise $\Omega=D$.
 It follows that there exists a point $f\in\d\Omega\cap ac$
 different from $a$ and $c$.

 Choose sufficiently small disjoint Jordan neighborhoods $V_e\ni e$ and $V_f\ni f$.
 Then $V_e$ is disjoint from $ac$, and $V_f$ is disjoint from $\d D$.
 We also have that $V_e\cap\d\Omega\subset abc$ and
 $V_f\cap\d\Omega\subset ac$.
 By Proposition \ref{appr-ext}, there is a simple curve connecting a
 point in $V_e\cap abc$ to a point in $V_f\cap ac$ and lying entirely
 in $\Omega$, except for the endpoints.
 There is also a simple curve connecting a (possibly different) point in
 $abc-ac$ to a (possibly different) point in $ac$ and lying entirely
 in $D_{abc}$, except for the endpoints.
 The union of these two simple curves and two arcs of $abc$
 (one in $abc-ac$, the other in $ac$)
 is a simple closed curve $C$ in $\overline D$.
 Consider the Jordan domain $D_{ef}$ bounded by $C$ and contained in $D$.
 The boundary of $D_{ef}$ intersects both $\Omega$ and $D_{abc}$.
 Therefore, $D_{ef}$ itself intersects both $\Omega$ and $D_{abc}$
 (since these two sets are open).
 Thus $D_{ef}$ must also intersect $ac$.
 This is impossible, however, because in this case, the intersection of
 the boundary of $D_{ef}$ with $ac$ must have more than one connected component.
\end{proof}

\begin{prop}
  \label{crossing}
  Consider a Jordan domain $D$ and four different points $a$, $b$, $c$, $d$ in $\d D$
  that appear in this cyclic order.
  Let $ac$ (resp. $bd$) be a simple curve connecting $a$ with $c$
  (resp. $b$ with $d$) and lying entirely in $D$, except for the endpoints.
  Then the curves $ac$ and $bd$ intersect.
\end{prop}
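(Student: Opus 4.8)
The plan is to argue by contradiction: assume the simple curves $ac$ and $bd$ are disjoint. The main tool will be Proposition \ref{div-domain} applied to the curve $ac$. Since $a$, $b$, $c$, $d$ appear on $\d D$ in this cyclic order, the two arcs of $\d D$ cut out by $\{a,c\}$ contain $b$ and $d$ respectively. Thus, in the notation of Proposition \ref{div-domain}, $ac$ divides $D$ into exactly two Jordan domains $D_{abc}$ and $D_{adc}$, bounded by the simple closed curves $abc$ and $adc$, where $b$ lies on the arc of $\d D$ inside $abc$ and $d$ lies on the arc of $\d D$ inside $adc$.

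Next I would examine the curve $bd$ under the contradiction hypothesis. Its interior $bd-\{b,d\}$ lies in $D$ by assumption, and it is disjoint from $ac$; hence it is contained in $D-ac=D_{abc}\sqcup D_{adc}$, a disjoint union of two open sets. Since $bd-\{b,d\}$ is the continuous image of the connected interval $(0,1)$, it is connected, and therefore lies entirely in one of the two subdomains. The contradiction will come from showing that it actually meets \emph{both}: the portion of $bd$ near the endpoint $b$ lies in $D_{abc}$, while the portion near $d$ lies in $D_{adc}$.

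The step I expect to be the main obstacle is pinning down which subdomain the path enters at each endpoint, i.e.\ the local \emph{side} of the curve $ac$ at $b$ and at $d$. For this I would invoke the Jordan domain axiom: the boundary of $D_{adc}$ is exactly $adc=ac\cup(\text{arc through }d)$, so $\overline{D_{adc}}=D_{adc}\cup adc$. Since $b\neq a,c$, the point $b$ lies on neither $ac$ nor the arc through $d$, whence $b\notin\overline{D_{adc}}$; thus $b$ admits a neighbourhood disjoint from $\overline{D_{adc}}$, and for $t$ close to $0$ the point $bd(t)\in D-ac$ is forced into $D_{abc}$. By the symmetric argument $d\notin\overline{D_{abc}}$, so $bd(t)\in D_{adc}$ for $t$ close to $1$. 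A connected set contained in $D_{abc}\sqcup D_{adc}$ cannot meet both, giving the desired contradiction and proving that $ac$ and $bd$ must intersect.
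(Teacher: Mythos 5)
Your proposal is correct and follows essentially the same route as the paper's proof: both apply Proposition \ref{div-domain} to split $D$ along $ac$ into $D_{abc}$ and $D_{adc}$, observe that $b\notin\overline{D_{adc}}$ and $d\notin\overline{D_{abc}}$, and conclude that $bd$ must pass from one subdomain to the other and hence meet $ac$. Your write-up merely makes explicit (via the Jordan domain axiom and connectedness of $bd-\{b,d\}$) two steps the paper leaves implicit.
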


\begin{proof}
  Let $D_{abc}$ and $D_{adc}$ be as in Proposition \ref{div-domain}.
  We know that $D$ is the union of $D_{abc}$, $D_{adc}$ and
  the curve $ac$ minus the endpoints.
  The points on the curve $bd$ that are close to $b$ must belong to
  $D_{abc}$ because $b$ is not in the closure of $D_{adc}$.
  Similarly, the points on $bd$ that are close to $d$ must belong to $D_{adc}$.
  It follows that $bd$ intersects $ac$.
\end{proof}

\begin{prop}\label{Jordan-intersect}
  Let $U$ and $V$ be Jordan domains, and $x\in \d U\cap V$.
  There exists a Jordan domain $W\subseteq U\cap V$ such that $x\in\d W$.
  Moreover, $W$ can be chosen as a connected component of $U\cap V$.
\end{prop}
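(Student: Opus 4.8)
The plan is to construct $W$ directly as the connected component of $U\cap V$ whose closure contains $x$, and then to prove that this component is a Jordan domain. Write $C_U=\d U$ and $C_V=\d V$. Since $x\in V$ (open) while $x\in C_U$ gives $x\notin U$, the point $x$ is not in $U\cap V$; yet every point of $U$ close enough to $x$ lies in $V$, hence in $U\cap V$. Because $X$ is locally path connected, components of the open set $U\cap V$ are open and path connected, and I take $W$ to be the component accumulating at $x$. Then $W$ is open, path connected, $W\subseteq U\cap V$, $W\subseteq U$, and $x\in\d W$ (as $x\in\overline W$ but $x\notin U\cap V$). The whole content is to recognize $W$ as a Jordan domain.

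First I would cut down the ambient domain. Let $\alpha$ be the connected component of the relatively open set $C_U\cap V$ containing $x$. If $\alpha=C_U$, i.e. $C_U\subseteq V$, then Proposition \ref{subdomain} puts one of the two Jordan domains bounded by $C_U$ inside $V$; if that domain is $U$ then $U\subseteq V$ and $W=U$, while the other possibility $X-\overline U\subseteq V$ forces $C_V\subseteq U$, a symmetric degeneracy treated the same way. In the main case $\alpha\ne C_U$, the closure $\overline\alpha$ is a proper arc of $C_U$ with interior in $V$ and both endpoints on $C_V$; thus $\overline\alpha$ is a chord of $V$ in the sense of Proposition \ref{div-domain}. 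By that proposition $\overline\alpha$ divides $V$ into two Jordan domains $V_1,V_2$, with $x\in\alpha\subseteq\d V_1\cap\d V_2$. Since $\alpha\subseteq C_U$ is disjoint from $U$ and the endpoints of $\overline\alpha$ lie on $C_V$ (outside $V\supseteq W$), the connected set $W$ is disjoint from $\overline\alpha$, hence lies in exactly one of $V_1,V_2$; call it $V_1$. As $V_1$ is open in $V$, $W$ is also a connected component of $U\cap V_1$. The favorable case is $C_U\cap V_1=\emptyset$: then $V_1$ is connected and disjoint from $C_U=\d U$, so $V_1\subseteq U$ or $V_1\subseteq X-\overline U$, and since $\emptyset\ne W\subseteq V_1\cap U$ we get $V_1\subseteq U$, whence $V_1\subseteq U\cap V$ and $W=V_1$ is a Jordan domain.

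The substantive case is when $C_U$ re-enters $V_1$, so that $U\cap V_1$ is again a proper, possibly complicated intersection. Here I would iterate: repeatedly cut $V_1$ along chords carried by the remaining arcs of $C_U$ inside it and, dually, cut $U$ along chords carried by the arcs of $C_V$ lying in it, always retaining the side on which $W$ lives. The Crossing Proposition \ref{crossing} is the organizing tool: a chord of $V$ coming from $C_U$ and a chord of $U$ coming from $C_V$ that both abut $W$ are forced to intersect, so the relevant arcs of $C_U$ and $C_V$ arrange themselves into a single cyclic, alternating sequence rather than an uncontrolled tangle, while Propositions \ref{arc-nosep} and \ref{interior} exclude spurious boundary. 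Using the Basis axiom (second countability) to pass to the limit of this peeling, I expect to assemble a simple closed curve $C^{*}\subseteq\overline U\cap\overline V$ through $x$, built from alternating arcs of $C_U$ and $C_V$, whose interior Jordan domain lies in $U\cap V$ (the containment coming from applying Proposition \ref{subdomain} to $U$ and to $V$ in turn) and equals $W$.

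The main obstacle is exactly this last assembly: proving that $\d W$ is a genuine simple closed curve rather than a wild subset of $C_U\cup C_V$. Since $C_U$ and $C_V$ are only assumed to be embedded circles, their intersection can be topologically complicated, and a priori $\d W$ could meet each of $C_U$, $C_V$ in infinitely many arcs; the crux is to show that these arcs close up in the correct cyclic order, without accumulation pathologies, into one embedded $S^1$. The crossing property, the non-separation of proper arcs (Proposition \ref{arc-nosep}), and second countability are the levers I would use against this. Once $C^{*}=\d W$ is known to be a simple closed curve, Propositions \ref{subdomain} and \ref{interior} close the argument: $W$ is the Jordan domain bounded by $C^{*}$ inside $U\cap V$, it satisfies $x\in\alpha\subseteq C^{*}=\d W$, and it is by construction a connected component of $U\cap V$, yielding both the existence statement and the \emph{moreover}.
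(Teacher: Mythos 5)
Your reduction to the ``favorable case'' is correct as far as it goes: when $C_U\cap V_1=\emptyset$, the domain $V_1$ produced by Proposition \ref{div-domain} is indeed the desired component. But the proposal has two genuine gaps. First, your starting point --- ``I take $W$ to be the component of $U\cap V$ accumulating at $x$'' --- presumes that some single component of $U\cap V$ has $x$ in its closure. Points of $U\cap V$ do accumulate at $x$, but a priori they could be distributed over infinitely many distinct components, no one of which has $x$ on its boundary; producing a component with $x\in\d W$ is part of what the proposition asserts, so it cannot be assumed at the outset. Second, and decisively, the ``substantive case'' is a plan rather than a proof, and you flag the unproved step yourself: the peeling procedure has no termination or convergence mechanism. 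The curve $C_U$ may enter $V$ along infinitely many arcs, so the iterated cutting never ends and there is no induction parameter; a ``limit'' of Jordan domains obtained from infinitely many cuts need not be a Jordan domain; and nothing in the argument shows that the arcs of $C_U$ and $C_V$ lying on $\d W$ close up into an embedded circle rather than accumulating pathologically. Propositions \ref{crossing} and \ref{arc-nosep} together with second countability do not by themselves supply this.

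For comparison, the paper's proof avoids iteration entirely and resolves exactly the point you left open by a single global construction. Parameterize $\d U$ by $u:[0,1]\to\d U$ with $u(0)=u(1)=x$; call $[a,b]\subset(0,1)$ a supporting interval if $u(a),u(b)\in\d V$ and some component of $\d V-\{u(a),u(b)\}$ (a ``penetrating arc'') lies in $U$. Penetrating arcs are pairwise disjoint, hence form a null sequence --- this compactness fact is the uniform control your limit process lacks --- so one can greedily select a maximal family $I_1,I_2,\dots$ of essentially disjoint supporting intervals in decreasing order of length (Proposition \ref{crossing} guarantees any two supporting intervals are nested or essentially disjoint). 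Replacing each arc $u(I_k)$ of $\d U$ by the corresponding penetrating arc yields in one step a curve $\Gamma\ni x$ whose simplicity and continuity follow from the lengths of the $I_k$ tending to zero. The Jordan domain $W$ bounded by $\Gamma$ inside $U$ (Proposition \ref{subdomain}) is then shown to lie in $V$: a component of $\d V\cap W$ with endpoints on $\Gamma$ would have to be a full penetrating arc whose supporting interval sits inside some selected $I_k$, which the construction rules out. Finally, $\d W\subseteq\d U\cup\d V$ forces $W$ to be a component of $U\cap V$, which also settles your first gap. If you wish to rescue your outline, the missing convergence step must be replaced by an argument of exactly this type.
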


\begin{proof}
  Let $u:[0,1]\to \d U$ be a surjective continuous path such that
  $u(0)=u(1)=x$, and $u(t)\ne u(t')$ for $t\ne t'$ unless
  $\{t,t'\}=\{0,1\}$.
  Define a {\em supporting interval} as a subinterval $[a,b]$ of $(0,1)$
  such that $u(a),u(b)\in\d V$, and there is a component of
  $\d V-\{u(a),u(b)\}$ that lies in $U$.
  We call this component a {\em penetrating arc}.
  Note that two different penetrating arcs are necessarily disjoint (because an endpoint
  of a penetrating arc cannot lie in another penetrating arc).
  Note that, by Proposition \ref{crossing}, any two supporting intervals are either
  nested or essentially disjoint.

  Since penetrating arcs are disjoint, they form a null-sequence
  (i.e. every subsequence contains a further subsequence converging to a point).
  It follows that the corresponding supporting intervals also form a null-sequence.
  In particular, in any subset of supporting intervals, there exists a
  longest interval with respect to the standard length (i.e. the length of $[a,b]$
  is $|b-a|$).

  Let $I_1$ be the longest supporting interval, $I_2$ the longest supporting
  interval essentially disjoint from $I_1$, etc.
  For any $n>1$, we define $I_n$ as the as the longest supporting interval
  essentially disjoint from $I_1$, $\dots$, $I_{n-1}$ (if any).
  In this way, we obtain a finite or countable sequence of essentially
  disjoint supporting intervals.
  In the case of countably many segments, their lengths tend to zero.

  Consider the simple closed curve $\Gamma$ obtained from $\d U$ by replacing
  every arc $u(I_k)=u[a_k,b_k]$ with a corresponding penetrating component of
  $\d V-\{u(a_k),u(b_k)\}$ (e.g. this component can be re-parameterized
  by the interval $[a_k,b_k]$).
  Clearly, $\Gamma$ contains $x$ and lies in $\overline U$.
  Therefore, there is a Jordan domain $W$ bounded by $\Gamma$ and contained in $U$.
  We claim that also $W\subseteq V$.
  Since $\Gamma$ intersects $V$ (e.g. $x\in\Gamma\cap V$),
  the domain $W$ also intersects $V$.
  It suffices to prove that $W$ is disjoint from the boundary of $V$.
  Assume the contrary: there is a component $A$ of $\d V\cap W$ connecting a pair of
  points in $\Gamma$.
  Since $W\subset U$, the arc $A$ belongs to some penetrating arc of $\d V$
  bounded by $u(a)$ and $u(b)$.
  The endpoints of $A$ are in $\Gamma$, therefore, $A$ coincides with this
  penetrating arc.
  By definition, $[a,b]$ is a supporting interval of $(0,1)$.
  By construction, it must belong to some $I_k=[a_k,b_k]$.
  Then the points $u(a)$ and $u(b)$ can only be in $\Gamma$ if $a=a_k$ and $b=b_k$.
  However, in this case, $u(a)$ and $u(b)$ cannot be endpoints of a component of
  $\d V\cap W$.
  The contradiction shows that $W\subseteq V$ and, therefore, $W\subseteq U\cap V$.
  Since all boundary points of $W$ belong to $\d U\cup\d V$, there is no
  connected open set contained in $U\cap V$ and having $W$ as a proper subset.
  It follows that $W$ is a connected component of $U\cap V$.
\end{proof}

\begin{prop}
\label{Un}
 Consider a Jordan domain $U$ in $X$ and a point $x$ on the boundary of $U$.
 Then there is a sequence of Jordan domains $U_n$ such that
 $$
 U_{n+1} \subset U_n \subset U,\quad\bigcap\overline U_n=\{x\}.
 $$
\end{prop}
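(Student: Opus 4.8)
The plan is to build the sequence $U_n$ by repeatedly applying Proposition \ref{Jordan-intersect}, using ever smaller basic Jordan domains around $x$ as the second domain at each stage, so that the resulting domains are forced to shrink toward $x$ while keeping $x$ on their boundaries. First I would record two preliminary facts. Since $X$ is compact Hausdorff it is regular, and by the Basis axiom it has a countable basis consisting of Jordan domains. Let $G_1,G_2,\dots$ enumerate those basic Jordan domains that contain $x$; these form a neighborhood basis at $x$. I claim that $\bigcap_k\overline{G_k}=\{x\}$: for any $y\ne x$, regularity produces an open set $O\ni x$ with $y\notin\overline O$, and shrinking $O$ to a basic Jordan domain $G\ni x$ inside $O$ exhibits some $G_k$ with $y\notin\overline{G_k}$; since $x$ lies in every $G_k$, the intersection of the closures is exactly $\{x\}$.

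Next I would construct $U_n$ by induction, maintaining three invariants: $U_n$ is a Jordan domain with $U_n\subseteq U$ and $x\in\d U_n$; one has $U_n\subseteq G_n$; and $U_n$ is a proper subset of $U_{n-1}$. For the inductive step, set $U_0:=U$ and suppose $U_{n-1}$ is given. Since $U_{n-1}$ is a nonempty open set and $x\in\d U_{n-1}$ (so $x\notin U_{n-1}$), I may pick a point $p\in U_{n-1}$ with $p\ne x$. Because $p$ is closed, the set $G_n\setminus\{p\}$ is an open neighborhood of $x$, so the basis furnishes a basic Jordan domain $V_n$ with $x\in V_n\subseteq G_n$ and $p\notin V_n$. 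Now $x\in\d U_{n-1}\cap V_n$, so Proposition \ref{Jordan-intersect} yields a Jordan domain $U_n\subseteq U_{n-1}\cap V_n$ with $x\in\d U_n$. Here $p\in U_{n-1}$ but $p\notin V_n\supseteq U_n$, so $p\notin U_n$, whence $U_n$ is a proper subset of $U_{n-1}$; and $U_n\subseteq V_n\subseteq G_n$, so all invariants persist.

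Finally I would verify the conclusion. By construction $U_{n+1}\subset U_n\subset U$ for every $n$. Each closure $\overline{U_n}$ contains $x$, because $x\in\d U_n\subseteq\overline{U_n}$, while $\overline{U_n}\subseteq\overline{G_n}$; hence $\bigcap_n\overline{U_n}\subseteq\bigcap_k\overline{G_k}=\{x\}$, and together with $x\in\bigcap_n\overline{U_n}$ this gives $\bigcap_n\overline{U_n}=\{x\}$, as required.

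The essential content at each stage is the single application of Proposition \ref{Jordan-intersect}, which supplies a Jordan domain inside $U_{n-1}\cap V_n$ still touching $x$ on its boundary; the rest is bookkeeping. The main point to be careful about is the choice of the auxiliary domains $V_n$, which must do two jobs at once: sit inside $G_n$, so that the closures $\overline{U_n}$ are squeezed down to $\{x\}$, and omit a prescribed point $p\in U_{n-1}$, so that the inclusions $U_n\subset U_{n-1}$ are strict. Both are easily arranged since the basic Jordan domains form a basis of a regular space, so I do not expect a serious obstacle here.
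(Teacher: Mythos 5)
Your proposal is correct and takes essentially the same approach as the paper: both proofs iterate Proposition \ref{Jordan-intersect}, intersecting the current domain with ever smaller basic Jordan neighborhoods of $x$ to produce the nested sequence. The only difference is that you spell out details the paper leaves implicit, namely the regularity argument showing that the closures shrink to $\{x\}$ and the device ensuring the inclusions are strict.
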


\begin{proof}
 Let $V_n$ be a basis of neighborhoods of $x$ consisting of Jordan domains.
 Set $U_1$ to be a component of $U\cap V_1$, whose boundary is a simple
 closed curve containing $x$.
 The existence of such component follows from Proposition \ref{Jordan-intersect}.
 Set $U_2$ to be a component of $U_1\cap V_2$, whose boundary is a simple
 closed curve containing $x$, etc.
 In this way, we construct a sequence $U_n$ of Jordan domains with the desired
 properties.
\end{proof}

\begin{prop}
\label{connect-to-bdry}
Let $U$ be a Jordan domain in $X$.
For any point $o\in U$ and any point $x$ on the boundary of $U$,
there is a simple path $\beta:[0, 1] \to\overline U$ such that
$\beta(0) = o$, $\beta(1) = x$,
and $\beta(t) \in U$ for all $t\in [0, 1)$.
\end{prop}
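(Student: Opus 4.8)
The plan is to approach the boundary point $x$ through the nested Jordan domains furnished by Proposition \ref{Un}, to build a path by concatenating arcs that sink progressively deeper toward $x$, and then to simplify the resulting path by Lemma \ref{simplify-path}.

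First I would invoke Proposition \ref{Un} to obtain Jordan domains $U = U_0 \supset U_1 \supset U_2 \supset \cdots$ with $U_{n+1} \subset U_n$ and $\bigcap_n \overline{U_n} = \{x\}$. The key consequence of this last equality, which I would record at the outset, is that every neighborhood $W$ of $x$ contains some $\overline{U_N}$: indeed the sets $\overline{U_n} \cap (X - W)$ are closed subsets of the compact space $X$, they are nested, and their intersection is $\{x\} \cap (X - W) = \emptyset$, so by the finite intersection property one of them is already empty, i.e. $\overline{U_N} \subseteq W$ for $N$ large. Since each $U_n$ is open and connected, hence path connected, I can then choose points $o_n \in U_n$ (with $o_0 = o$) and join $o_n$ to $o_{n+1}$ by a continuous path lying entirely in $U_n$.

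Next I would assemble these arcs into a single path $\gamma\colon [0,1] \to \overline{U}$. Setting $t_n = 1 - 2^{-n}$, I reparametrize the $n$-th arc by $[t_n, t_{n+1}]$ and put $\gamma(1) = x$. Continuity on $[0,1)$ is immediate, since consecutive arcs share the endpoint $o_n$; continuity at $1$ follows from the neighborhood property above, because once $t \geq t_N$ the value $\gamma(t)$ lies in some $U_n \subseteq \overline{U_N} \subseteq W$. By construction $\gamma[0,1) \subset U$, while $\gamma(1) = x \notin U$; in particular $\gamma(0) = o \neq x = \gamma(1)$, so Lemma \ref{simplify-path} applies.

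Finally I would pass to a simplification $\beta$ of $\gamma$: a simple path with $\beta(0) = o$, $\beta(1) = x$, and $\beta[0,1] \subseteq \gamma[0,1] \subseteq U \cup \{x\}$. Because $\beta$ is simple and $\beta(1) = x$, the point $x$ is attained only at $t = 1$, whence $\beta(t) \in U$ for all $t \in [0,1)$, as required. The one step that needs genuine care is the continuity of $\gamma$ at the accumulation parameter $t = 1$, which is precisely where the hypothesis $\bigcap_n \overline{U_n} = \{x\}$ enters; beyond that I expect no real obstacle, since Lemma \ref{simplify-path} is stated for arbitrary continuous paths and so absorbs the infinitely many self-intersections of $\gamma$ accumulating near $t = 1$ with no extra argument.
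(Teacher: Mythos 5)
Your proof is correct and takes essentially the same route as the paper's: both invoke Proposition \ref{Un} to get the nested Jordan domains, concatenate paths joining successive points $o_n\in U_n$ over the intervals $[1-2^{-n},1-2^{-n-1}]$, deduce continuity at $t=1$ from $\bigcap\overline U_n=\{x\}$, and finish with Lemma \ref{simplify-path}. The only difference is one of detail: you make explicit the compactness argument for continuity at $1$ and the observation that a simple path hits $x$ only at $t=1$, both of which the paper leaves implicit.
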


\begin{proof}
Let $U_n$ be a sequence of Jordan domains such that
$U_{n+1}\subseteq U_n\subseteq U$ and $\bigcap\overline U_n=\{x\}$
(such sequence exists by Proposition \ref{Un}).
Take $x_n \in U_n$.
We can now define a continuous path $\alpha$ (not necessarily simple) as follows.
The restriction of $\alpha$ to $[0,1/2]$ is a path connecting $o$ to $x_1$ in $U$.
Next, the restriction of $\alpha$ to $[1/2,3/4]$ is defined as a path connecting $x_1$ to
$x_2$ in $U_1$.
Inductively, we define the restriction of $\alpha$ to $[1-2^{-n},1-2^{-n-1}]$
as a path connecting $x_n$ to $x_{n+1}$ in $U_n$.
We obtain a path $\alpha:[0, 1)\to U$ such that $\alpha(t)\to x$ as $t\to 1$.
(This is because $x$ is the only intersection point of $\overline U_n$).
Therefore, we can set $\alpha(1) = x$ thus obtaining a continuous path
$\alpha: [0; 1] \to\overline U$ such that $\alpha(t)\in U$ for all $t\in (0, 1)$.
Finally, we define $\beta$ as a simplification of $\alpha$.
\end{proof}

The Extension property (Theorem \ref{ext-prty}) follows from
Proposition \ref{connect-to-bdry} and Lemma \ref{simplify-path}.

\section{Simple intersections}

The boundaries of two Jordan domains can intersect in a complicated way.
In this section, we will develop a machinery that reduces many questions
on mutual position of Jordan domains to the case, where the boundaries
intersect in a simple way.
As in the preceding section, we consider a compact, connected,
locally path connected, Hausdorff topological space $X$ satisfying
the Jordan domain axiom and the Basis axiom.

We say that a simple closed curve $A$ in $X$ has only {\em simple intersections} with
a simple closed curve $B$ in $X$ if $A\cap B$ is a finite set,
and every two adjacent components of $A-B$ lie on different sides of $B$,
i.e. in different Jordan domains bounded by $B$.

\begin{lemma}
  Suppose that $A$ has only simple intersections with $B$.
  Then $B$ has only simple intersections with $A$.
\end{lemma}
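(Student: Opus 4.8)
The plan is to reduce the statement to a purely local fact at each point of $A\cap B$, namely that the relation ``$A$ crosses $B$'' is symmetric. First observe that the condition ``$A\cap B$ is finite'' is manifestly symmetric, so it suffices to transfer the crossing condition. The two adjacent components of $A-B$ that meet at a given point $p\in A\cap B$ are precisely the two arcs of $A$ emanating from $p$ toward the neighbouring intersection points; thus the hypothesis that $A$ has only simple intersections with $B$ says exactly that at each $p\in A\cap B$ the two $A$-branches issuing from $p$ lie in different Jordan domains bounded by $B$. I will call this ``$A$ crosses $B$ at $p$'', and the whole lemma amounts to showing that $A$ crosses $B$ at $p$ if and only if $B$ crosses $A$ at $p$.

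Fix $p\in A\cap B$. Using the Basis axiom together with the finiteness of $A\cap B$ and compactness, I would first choose a Jordan domain $N\ni p$ so small that $\overline N$ contains no other point of $A\cap B$ and that $A\cap\overline N$ and $B\cap\overline N$ are each a single simple arc through $p$ with its two endpoints on $\partial N$ and its interior in $N$. Write $P_1,P_2\in\partial N$ for the endpoints of the $A$-arc and $Q_1,Q_2\in\partial N$ for those of the $B$-arc, and let $b=B\cap\overline N$, a simple curve joining $Q_1$ to $Q_2$ in $\overline N$ that meets $\partial N$ only at its endpoints. By Proposition \ref{div-domain}, $b$ divides $N$ into two Jordan domains $N_1,N_2$, and $\partial N$ is correspondingly split by $Q_1,Q_2$ into two arcs, one in $\overline{N_1}$ and one in $\overline{N_2}$.

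Next I would identify these local sides with the global ones. Each $N_i$ is a connected subset of $X-B$, hence lies in one Jordan domain bounded by $B$; moreover $N_1$ and $N_2$ lie in \emph{different} such domains, for if both were in, say, $D_1$, then the open set $N$ would miss $D_2$, contradicting the fact that the interior point $p$ of the chord $b$ lies on $\partial D_2$. Consequently the two $A$-branches at $p$ lie on different sides of $B$ exactly when their endpoints $P_1,P_2$ lie in different components of $\partial N-\{Q_1,Q_2\}$, i.e. exactly when the four points occur in the alternating cyclic order $P_1,Q_1,P_2,Q_2$ on the simple closed curve $\partial N$. Running the identical argument with the roles of $A$ and $B$ exchanged shows that $B$ crosses $A$ at $p$ if and only if $Q_1,Q_2$ separate $P_1,P_2$ on $\partial N$ --- the very same alternating condition. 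Since separation of two pairs of points on a simple closed curve is a symmetric relation, the two crossing conditions coincide, which is what is needed.

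The main obstacle I anticipate is the first step of the second paragraph: justifying, within the axiomatic framework (rather than assuming any smoothness), that a sufficiently small Jordan neighbourhood $N$ can be chosen in which each of $A$ and $B$ restricts to a single chordal arc through $p$. This requires combining a parameterization of each simple closed curve with the existence of small Jordan neighbourhoods of $p$ (as in Proposition \ref{Un} and the Basis axiom) and a compactness argument to discard the far-away parts of the curves. The remaining verifications --- that a branch of $A$ avoiding $b$ is connected and so stays in one $N_i$, and the translation between ``side of the chord $b$'' and ``side of $B$'' --- are then routine.
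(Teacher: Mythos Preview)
Your reduction to the alternating cyclic order on $\partial N$ is correct \emph{once such an $N$ exists}, and it is a different route from the paper's. But the obstacle you flag is more serious than your proposed fix resolves. The compactness argument you sketch only yields a Jordan neighbourhood $N$ with $A\cap N$ contained in a short sub-arc of $A$ (and likewise for $B$); it does \emph{not} force $A\cap\overline N$ to be a single chord meeting $\partial N$ in exactly two points. That short sub-arc of $A$ may weave in and out of $N$ arbitrarily often, and similarly for $B$, so the re-entering pieces of $B$ can meet $N_1$ or $N_2$. Then your key identification ``side of the chord $b$ inside $N$'' $=$ ``side of $B$ in $X$'' breaks down. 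One can probably manufacture a suitable $N$ by hand (e.g.\ build $\partial N$ from four simple arcs, one in each local ``quadrant'' determined by the $A$- and $B$-chords, using the Extension property), but that is substantial extra work you have not supplied, and Proposition~\ref{Un} together with compactness does not do it.

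The paper sidesteps this entirely. It argues by contradiction: if two adjacent components $B_1,B_2$ of $B-A$ at a common endpoint $b$ lie on the \emph{same} side $D$ of $A$, take any small Jordan neighbourhood $U$ of $b$ (only requiring $U\cap A\subset A_1\cup A_2\cup\{b\}$ and the analogous containment for $B$), and apply Proposition~\ref{Jordan-intersect} to the pair $(D',U)$, where $D'$ is the other side of $A$. This yields a Jordan domain $V\subset U\cap D'$ with $b\in\partial V$, and the explicit construction of $\partial V$ in that proposition gives an arc $C\subset\partial V\cap D'$ joining a point $a_1\in A_1$ to a point $a_2\in A_2$. Since near $b$ all of $B$ lies in $D\cup\{b\}$ (the contradiction hypothesis), this $C$ misses $B$, so $a_1$ and $a_2$---hence $A_1$ and $A_2$---lie in the same component of $X-B$, contradicting the simplicity assumption on $A$. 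No control over how $A$ or $B$ meet $\partial U$ is ever required, which is exactly the difficulty your approach must overcome.
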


\begin{proof}
  We need to prove that every two adjacent components of $B-A$ lie
  on different sides of $A$.
  Suppose not: two adjacent components $B_1$ and $B_2$ of $B-A$
  belong to the same Jordan domain $D$ bounded by $A$.
  Let $D'$ be the other Jordan domain bounded by $A$, and
  $b$ the common endpoint of $B_1$ and $B_2$.
  Denote by $A_1$ and $A_2$ the two components of $A-B$ incident to $b$.
  If $U$ is a sufficiently small Jordan domain neighborhood of $b$,
  then $U\cap A\subset A_1\cup A_2\cup\{b\}$.
  By Proposition \ref{Jordan-intersect}, there exists a Jordan
  domain $V$ such that $b\in\d V$, and $V$ is a connected component
  of $U\cap D'$.
  There exist two points $a_1\in\d V\cap A_1$ and $a_2\in\d V\cap A_2$
  and a component $C$ of $\d V\cap D'$ connecting $a_1$ with $a_2$.
  The curve $C$ is disjoint from $B$.
  Therefore, $a_1$ and $a_2$ must be in the same component of $X-B$; a contradiction.
\end{proof}

The main objective of this section is to prove the following

\begin{thm}[Covering property]
  \label{cov-prty}
  Let $\Uc$ be a finite open covering of $\overline D$, where $D$ is a Jordan domain in $X$.
  Then there exists a refinement $\Uc'$ of $\Uc$ of the same cardinality such that
  every element $U'\in\Uc'$ is a Jordan domain, whose boundary has only simple
  intersections with $\d D$.
\end{thm}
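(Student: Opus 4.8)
The plan is to combine a shrinking of the cover with a one-element-at-a-time replacement, so that the geometric work is concentrated in a single statement about enclosing a compact set. Since $X$ is compact Hausdorff and second countable it is metrizable and normal, so the finite cover $\Uc=\{U_1,\dots,U_n\}$ admits a shrinking $\{W_1,\dots,W_n\}$ with $\overline{W_i}\subseteq U_i$ that still covers $\overline D$. I would then replace the $U_i$ one at a time. Having already produced Jordan domains $U_1',\dots,U_{i-1}'$ (with $U_j'\subseteq U_j$ and $\partial U_j'$ meeting $\partial D$ simply), the set $K_i:=\overline D\setminus\big(U_1'\cup\dots\cup U_{i-1}'\cup U_{i+1}\cup\dots\cup U_n\big)$ is compact and contained in $U_i$, and it suffices to find a Jordan domain $U_i'$ with $K_i\subseteq U_i'\subseteq U_i$ whose boundary has only simple intersections with $C:=\partial D$. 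Because each new element satisfies $U_i'\subseteq U_i$, the assignment $U_i'\mapsto U_i$ is a bijection of $\Uc'$ onto $\Uc$, which is exactly what gives a refinement of the same cardinality. Everything thus reduces to the following sub-statement: for a compact set $K$ contained in a connected open set $U$, there is a Jordan domain $U'$ with $K\subseteq U'\subseteq U$ and $\partial U'$ meeting $C$ simply.

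For the sub-statement I would first produce an enclosing Jordan domain, temporarily ignoring the intersection condition. Using the Basis axiom, cover $K$ by finitely many Jordan domains with closures contained in $U$; since $U$ is open and connected, hence path connected, join these by thin Jordan-domain ``tubes'' running along paths in $U$, and then absorb those complementary components that lie inside $U$. Keeping every intermediate set a Jordan domain is precisely the kind of bookkeeping handled by Propositions \ref{subdomain} and \ref{Jordan-intersect}, and the outcome is a Jordan domain $W$ with $K\subseteq W\subseteq U$. At this point $W$ encloses $K$, but its boundary $B:=\partial W$ may meet $C$ in a wild way.

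The heart of the argument, and the step I expect to be the main obstacle, is to simplify the pair $(B,C)$ while preserving both ``$U'$ is a Jordan domain'' and ``$K\subseteq U'\subseteq U$''. The intersection $B\cap C$ can be infinite and non-transversal: arcs of $B$ may run along $C$ or touch it without crossing. The plan is to run a surgery modelled on the proof of Proposition \ref{Jordan-intersect}: identify the maximal arcs of $B$ that fail to cross $C$ cleanly, and replace each of them by an arc pushed strictly to one side of $C$ (into $D$ or into the complementary Jordan domain $X-\overline D$), rerouting inside $U$ by means of small Jordan neighborhoods and the Approximate extension property (Proposition \ref{appr-ext}) so that $K$ stays enclosed. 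The sides are chosen so that the surviving arcs alternate between the two Jordan domains bounded by $C$, leaving $B\cap C$ a finite set with adjacent components of $B-C$ on opposite sides; the symmetry lemma proved above then certifies that this is a genuine simple intersection. Guaranteeing simultaneously the finiteness of $B\cap C$, the alternating-sides condition, and that the rerouted curve is still a single simple closed curve enclosing $K$ within $U$ is the delicate point — exactly the penetrating-arc bookkeeping that made Proposition \ref{Jordan-intersect} the technical core of the previous section.
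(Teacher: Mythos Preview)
Your reduction to the one-compact-set sub-statement (shrink, then replace the $U_i$ one at a time) is sound and is exactly how the paper implicitly organizes the argument. Two remarks on the framework: the shrinking $\{W_i\}$ plays no role in your induction, and more importantly your sub-statement assumes $U$ connected while nothing forces the $U_i$ in $\Uc$ to be connected (indeed $K_i$ could meet several components of $U_i$). The paper sidesteps this by tacitly assuming, as in the proof of Theorem~\ref{subordinate-grid}, that the $U_i$ are already Jordan domains; then your Step~1 is free.

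The real gap is Step~2. Your proposed surgery ``identify the maximal arcs of $B$ that fail to cross $C$ cleanly and push each to one side'' does not come with a mechanism that makes $B\cap C$ finite. The set $B\cap C$ can be a Cantor set, and the components of $B- C$ need not form a null-sequence of ``bad'' arcs in any usable sense; the penetrating-arc bookkeeping of Proposition~\ref{Jordan-intersect} worked because one was inserting arcs of a \emph{fixed} simple closed curve, which are automatically a null-sequence, whereas here you are manufacturing new arcs whose sizes you do not control. So as written, Step~2 is a restatement of the difficulty, not a resolution of it.

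The paper's route is to separate ``finite'' from ``simple''. First, the Bump Lemma (via the Scylla--Charybdis Lemma) produces, inside a Jordan domain, a simple curve between two prescribed boundary points that meets a given simple closed curve $\Gamma$ in only finitely many points; the finiteness comes from the observation that among the components of $\Gamma\cap U$ only finitely many are \emph{crossing} components (ones whose endpoints lie on different sides of the auxiliary quadrilateral), because otherwise they would accumulate. Patching such arcs along a subdivision of $\partial W$ yields a Jordan domain $\tilde W$ with $K\subset\tilde W\subset W$ and $\partial\tilde W\cap C$ finite (Corollary~\ref{bump_curve}). Second, once the intersection is finite, each non-simple intersection point is removed by a local reroute in a small neighborhood disjoint from $K$ (Proposition~\ref{p.make_simple_intersection}). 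This two-stage approach is the missing idea; your direct surgery tries to do both stages at once without the finiteness lever.
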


Recall that a covering $\Uc'$ is a {\em refinement} of $\Uc$ if for every $U'\in\Uc'$
there exists $U\in\Uc$ such that $U'\subseteq U$.

\begin{lemma}[The Scylla and Charybdis Lemma]
\label{l.Scylla_and_Charybdis}
  Let $D$ be a Jordan domain and $\Gamma$ a simple closed curve intersecting $\d D$.
  Fix two points $a,b\in\d D$
  that are not on the boundary (taken in $\d D$) of $\Gamma\cap\d D$ and
  such that, for every component $A$ of
  $\Gamma\cap D$, the endpoints of $A$ do not separate $a$ from $b$ in $\d D$.
  Then the points $a$ and $b$ can be connected
  by a simple curve lying entirely in $D$ (except for the endpoints)
  and disjoint from $\Gamma$ (except, possibly, for the endpoints).
\end{lemma}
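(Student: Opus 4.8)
The plan is to adapt, almost verbatim, the construction used in the proof of Proposition \ref{Jordan-intersect}. I begin by analysing the components of $\Gamma\cap D$, which I will call \emph{chords}. Each chord is a component of the relatively closed set $\Gamma\cap D$, hence an open arc of $\Gamma$ whose two endpoints lie on $\partial D$ (the whole of $\Gamma$ cannot be a chord, since $\Gamma$ meets $\partial D$); if a chord returns to its starting point the argument below only simplifies. By Proposition \ref{div-domain} the closure of a chord $A$ divides $D$ into two Jordan subdomains. Distinct chords are disjoint, so exactly as in Proposition \ref{Jordan-intersect} they form a null sequence, and their endpoint pairs on $\partial D$ are pairwise non-crossing: two disjoint chords with interlaced endpoints would be forced to intersect by Proposition \ref{crossing}. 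For each chord $A$ the two endpoints cut $\partial D$ into two arcs, and the hypothesis guarantees that $a$ and $b$ lie on the same one; let $I_A$ be the \emph{other} arc and $B_A\subset D$ the Jordan subdomain cut off by $A$ whose boundary arc on $\partial D$ is $I_A$, the ``bad'' region lying away from $a$ and $b$. Non-crossing of the endpoint pairs translates at once into the arcs $I_A$ (equivalently the regions $B_A$) being pairwise nested or essentially disjoint.

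Next I would select greedily, longest first and exactly as in Proposition \ref{Jordan-intersect}, a maximal essentially disjoint subfamily of bad arcs; call the chosen chords $A_1,A_2,\dots$ with arcs $I_1,I_2,\dots$. By the nesting dichotomy every bad arc is contained in some $I_k$, so $\bigcup_k\overline{B_{A_k}}=\bigcup_A\overline{B_A}$. Since the $I_k$ are disjoint arcs of $\partial D$ and the $A_k$ form a null sequence, the set obtained from $\partial D$ by replacing, for each $k$, the arc $I_k$ with the chord $A_k$ is a simple closed curve $\Gamma'\subseteq\overline D$, in complete analogy with the construction of the curve $\Gamma$ in the proof of Proposition \ref{Jordan-intersect}. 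By Proposition \ref{subdomain}, one of the two Jordan domains bounded by $\Gamma'$, call it $W$, is contained in $D$. By construction $W$ lies on the side of each $A_k$ opposite to $B_{A_k}$: every bad region $B_{A_k}$ is connected, misses $\Gamma'$, and can be joined to $X-\overline D$ through an interior point of $I_k$ without crossing $\Gamma'$, so $B_{A_k}$ lies in the \emph{other} Jordan domain bounded by $\Gamma'$. Since every chord is either some $A_k$ (hence on $\partial W$) or is contained in some $B_{A_k}$, this gives $W\cap\Gamma=\emptyset$, i.e. $W\subseteq D-\Gamma$. Finally $a,b\in\partial D-\bigcup_k I_k\subseteq\Gamma'=\partial W$, so the Extension property (Theorem \ref{ext-prty}) applied to the Jordan domain $W$ produces a simple curve joining $a$ to $b$ inside $W$ except for its endpoints; this curve lies in $D$ and its interior, being in $W$, avoids $\Gamma$, which is exactly what is required.

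The step I expect to be most delicate is verifying that $a$ and $b$ are \emph{genuine} points of $\Gamma'$, neither swallowed by the replacement nor turned into pinch points, and it is precisely here that the hypothesis on $a$ and $b$ is used. I would argue that no bad arc $I_A$ accumulates at $a$ (and similarly at $b$). If $a\notin\Gamma$, then a neighbourhood of $a$ in $\partial D$ misses $\Gamma$, hence all chord endpoints. If $a$ lies in the interior of $\Gamma\cap\partial D$ relative to $\partial D$, then an arc of $\partial D$ around $a$ is contained in $\Gamma$; since $\Gamma$ is a simple closed curve it cannot, in addition, sprout a chord into $D$ at an interior point of that arc, so again there are no chord endpoints near $a$. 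The one excluded configuration --- $a$ on the boundary (taken in $\partial D$) of $\Gamma\cap\partial D$ --- is exactly the one in which chords can accumulate at $a$, which is why the hypothesis forbids it. With accumulation ruled out, $\Gamma'$ coincides with $\partial D$ near $a$ and near $b$, so these points survive as honest points of the curve. The only remaining point, shared with and settled in the same way as in Proposition \ref{Jordan-intersect}, is the continuity of the parameterisation of $\Gamma'$, which is guaranteed by the null-sequence property of the chords.
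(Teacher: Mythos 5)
Your proposal is correct and takes essentially the same approach as the paper: the paper's own proof consists of applying Proposition \ref{Jordan-intersect} (with $U$ a Jordan domain bounded by $\Gamma$ containing $a$) and observing that its internal construction --- exactly the arc-replacement procedure you carry out explicitly --- yields a Jordan domain having both $a$ and $b$ on its boundary, after which the Extension property (Theorem \ref{ext-prty}) finishes the argument just as in your last step. The only difference is packaging: you re-run the construction and verify by hand that the resulting domain $W$ avoids $\Gamma$ and retains $a$ and $b$ on $\d W$, whereas the paper gets the disjointness for free from $W\subseteq U\subseteq X-\Gamma$ and leaves the survival of $b$ as a remark about ``the construction'' of Proposition \ref{Jordan-intersect}.
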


The name of the lemma is due to the fact that parts of
$\Gamma$ can ``penetrate'' into
$D$ from ``both sides'', i.e. through both components of $\d D-\{a,b\}$.

\begin{proof}
  Let $U$ be a Jordan domain bounded by $\Gamma$ that contains $a$.
  By Proposition \ref{Jordan-intersect}, there is a component $V$ of $D\cap U$
  such that $a\in \d V$.
  (In the case $a$ is an interior point, with respect to $\d D$, of $\Gamma\cap\d D$,
  we need an obvious modification of Proposition \ref{Jordan-intersect},
  the proof is the same).
  Moreover, $V$ is a Jordan domain.
  From the construction of $V$ given in the proof of Proposition
  \ref{Jordan-intersect} it follows that $b$ is also on the boundary of $V$.
  It remains to use the Extension property (Theorem \ref{ext-prty})
  to conclude that there is a simple curve connecting $a$ and $b$ and
  lying in $V$, except for the endpoints.
  Clearly, this curve is disjoint from $\Gamma$.
\end{proof}

The following lemma is taken from \cite{Moore_postulates}
(except for the name):

\begin{lemma}[The Bump Lemma]
\label{l.bump}
  Let $D$ be a Jordan domain and $\Gamma$ a simple closed curve.
  Consider two points $a,b\in\d D$ that are not on the boundary
  (taken in $\d D$) of $\Gamma\cap \d D$.
  Then there is a simple curve $C$ with endpoints $a$ and $b$ such that
  $C-\{a,b\}\subset D$, and $C\cap\Gamma$ is finite.
\end{lemma}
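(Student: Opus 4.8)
The plan is to reduce the Bump Lemma to the Scylla and Charybdis Lemma (Lemma \ref{l.Scylla_and_Charybdis}) by cutting $D$ along the finitely many components of $\Gamma\cap D$ that genuinely obstruct a \emph{disjoint} connection. Call a component $A$ of $\Gamma\cap D$ \emph{separating} if its two endpoints on $\d D$ separate $a$ from $b$ in $\d D$. I would first observe that $X$ is metrizable (it is compact and, by the Basis axiom, second countable), so fix a metric. The components of $\Gamma\cap D$ are pairwise disjoint arcs of the simple closed curve $\Gamma$, so they correspond to disjoint open arcs in the circle; their diameters in $X$ tend to $0$. Any two separating components have unlinked endpoint pairs, for otherwise Proposition \ref{crossing} would force the two (disjoint) chords to meet; hence all separating components are mutually nested and linearly ordered from $a$ to $b$. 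A separating component has its two $\d D$-endpoints within its own diameter of each other, yet they must stay bounded apart in order to separate the fixed distinct points $a,b$; by the null-sequence property this leaves only finitely many separating components $A_1,\dots,A_k$, ordered from $a$ to $b$.

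Next I would use these chords to slice $D$. Each closure $\overline{A_i}$ is a simple curve joining two points of $\d D$ and lying in $D$ except for its endpoints, so by Proposition \ref{div-domain} it divides $D$ into two Jordan domains; applying this successively along the nesting cuts $D$ into $k+1$ consecutive Jordan subdomains $R_0,\dots,R_k$, where $R_0$ meets $\d D$ near $a$, $R_k$ near $b$, and $R_{i-1},R_i$ share the chord $A_i$. The conceptual heart of the argument is the following dichotomy: every non-separating component $\widehat B$ of $\Gamma\cap D$ is disjoint from every chord $A_j$, hence (being connected) lies in a single region $R_i$, with both endpoints on $\d D\cap\d R_i$. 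If one endpoint lay on each of the two $\d D$-arcs of $\d R_i$, then those endpoints would separate $a$ from $b$ in $\d D$, making $\widehat B$ separating---a contradiction. Thus both endpoints lie on the \emph{same} boundary arc of $R_i$, and therefore $\widehat B$ does not separate a point $p_i$ of $A_i$ from a point $p_{i+1}$ of $A_{i+1}$ in $\d R_i$. In other words, inside each $R_i$ no penetrating arc of $\Gamma$ separates the two ``gateway'' points I wish to join.

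With this in hand the Scylla and Charybdis Lemma applies cleanly in each region. Choosing interior points $p_i$ of the chords $A_i$ (and setting $p_0=a$, $p_{k+1}=b$), I would apply Lemma \ref{l.Scylla_and_Charybdis} inside $R_i$ to connect $p_i$ to $p_{i+1}$ by a simple curve lying in $R_i$ except for its endpoints and disjoint from $\Gamma$ except possibly at those endpoints; note $\Gamma\cap\d R_i\supseteq A_i\neq\emptyset$ and $p_i,p_{i+1}$ are interior to $\Gamma\cap\d R_i$, so the hypotheses hold (with the minor interior-point modification already noted in the proof of Proposition \ref{Jordan-intersect}). Concatenating these pieces yields a path from $a$ to $b$ lying in $D$ except for $a,b$ and meeting $\Gamma$ only at the finitely many gateway points $p_1,\dots,p_k$, where it simply crosses the chords---these crossings are the ``bumps.'' Finally I would pass to a simplification (Lemma \ref{simplify-path}) to obtain a \emph{simple} curve $C$; since the simplified curve is contained in the concatenation, $C\cap\Gamma\subseteq\{p_1,\dots,p_k\}$ is finite and $C-\{a,b\}\subset D$, as required.

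I expect the main obstacle to be the finiteness argument in the first paragraph: it is the one place forcing metrizability, and one must argue carefully that nesting (via Proposition \ref{crossing}) together with the null-sequence decay of the diameters really prevents an infinite nested tower of separating arcs around shrinking gateway pairs. The remaining steps---verifying that the $R_i$ are genuine Jordan domains with the claimed two-arc boundary structure, and that the gateway points are interior to $\Gamma\cap\d R_i$ so that Scylla and Charybdis applies---are routine once the dichotomy of the second paragraph is established, which is what makes the whole construction avoid any recursion.
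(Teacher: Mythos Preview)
Your approach is correct and shares the paper's overall architecture---cut into finitely many slabs, then invoke the Scylla and Charybdis Lemma in each---but the slicing is genuinely different. The paper first lays down an auxiliary simple curve $C'$ from $a$ to $b$ inside $D$ (chosen so that initial and terminal segments avoid $\Gamma$), forms the Jordan strip $U\subset D$ bounded by $C'$ and one arc of $\partial D$, and then cuts $U$ along the \emph{crossing} components of $\Gamma\cap U$, i.e.\ those with one endpoint on $C'$ and one on $\partial D$. You instead work in $D$ directly and cut along the \emph{separating} components of $\Gamma\cap D$, those whose two $\partial D$-endpoints separate $a$ from $b$ in $\partial D$. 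The paper's auxiliary curve buys a one-line finiteness argument: crossing components run between two disjoint compacta (the part of $C'$ away from its $\Gamma$-free ends, and $\partial D$), so an accumulation is immediately impossible. Your route avoids the auxiliary curve and is more economical once finiteness is secured; the subsequent verification that every residual penetrating arc has both endpoints on a single side of each slab, so that Scylla and Charybdis applies, goes through exactly as you describe.

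The one sentence that needs tightening is the finiteness step, as you yourself flag. The assertion that the two $\partial D$-endpoints of a separating component ``must stay bounded apart in order to separate the fixed distinct points $a,b$'' is not literally true: separation is a statement about the cyclic order on $\partial D$, not about the ambient metric, and two points on opposite arcs of $\partial D-\{a,b\}$ can be arbitrarily close in $X$ if both approach $a$ (or $b$). The repair uses the hypothesis on $a$ and $b$: the endpoints of any component of $\Gamma\cap D$ lie in the relative boundary $\partial_{\partial D}(\Gamma\cap\partial D)$ (if $p$ were relatively interior, both arc-germs of $\Gamma$ at $p$ would lie in $\partial D$, contradicting that an arc of $\Gamma\cap D$ approaches $p$). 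This relative boundary is closed and, by hypothesis, misses $a$ and $b$, hence lies at positive distance from $\{a,b\}$. Restricting the two arcs of $\partial D-\{a,b\}$ to the complement of that neighbourhood gives disjoint compact sets a positive distance apart, which is the uniform lower bound on diameters you need. With this inserted, your null-sequence argument yields finiteness and the rest of your proof stands.
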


\begin{proof}
  It is not hard to see that there is a simple curve $C'$ connecting $a$
  with $b$ and lying in $D$, except for the endpoints, with the following property:
  there are non-degenerate segments $ad$ and $bc$ of $C'$ that
  are disjoint from $\Gamma$, apart from (possibly) the endpoints
  (non-degenerate means containing more than one point).
  Consider the Jordan domain $U\subset D$ bounded by $C'$ and an arc of $\d D$
  between $a$ and $b$.
  These two curves will be referred to as the {\em sides} of $U$,
  and the points $a$ and $b$ as the {\em vertices} of $U$.

  Let $L$ be a component of $\Gamma\cap U$.
  We call $L$ a {\em crossing component} if the endpoints of $L$ belong to different
  sides of $U$.
  There are only finitely many crossing components (otherwise, they would accumulate
  somewhere, which contradicts the fact that $\Gamma$ is a simple closed curve).
  By the Jordan domain axiom, the crossing components subdivide $U$ into
  several Jordan subdomains $U_0$, $\dots$, $U_n$ so that $\d U_i\cap\d U_{i+1}$
  is the closure of a crossing component $L_i$ for $i=0,\dots,n-1$.
  Choose arbitrary points $a_i\in L_i$ for $i=1,\dots,n-1$, and set $a_0=a$, $a_n=b$.
  By the Scylla and Charybdis Lemma, Lemma \ref{l.Scylla_and_Charybdis},
  we can connect $a_i$ to $a_{i+1}$ by
  a simple curve lying entirely in $U_i$, except for the endpoints, and disjoint
  from $\Gamma$ (except, again, for the endpoints).

  The union of the curves thus obtained is a simple curve $C$ connecting $a$ with $b$,
  for which the intersection $C\cap\Gamma$ is finite
  (one point in every crossing component of $\Gamma$, and possibly the endpoints $a$
  and/or $b$).
\end{proof}

A corollary of the Bump Lemma is the following

\begin{cor}
\label{bump_curve}
  Let $D$ be a Jordan domain, and $\Gamma$ a simple closed curve.
  For every compact subset $K\subset D$, there exists a Jordan domain
  $\tilde D\subseteq D$ containing $K$ such that $\d\tilde D\cap\Gamma$ is finite.
\end{cor}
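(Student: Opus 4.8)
The plan is to approximate $\d D$ from inside by an inscribed ``bump polygon'': a simple closed curve $B\subset\overline D$ whose vertices lie on $\d D$ and whose edges are short arcs pushed slightly into $D$, each meeting $\Gamma$ only finitely often by the Bump Lemma (Lemma \ref{l.bump}). Since $B$ is a finite union of such arcs, $B\cap\Gamma$ is automatically finite; by Proposition \ref{subdomain} one of the two Jordan domains cut out by $B$ lies in $D$, and this domain $\tilde D$ will be the desired one. The whole point is to carry out the construction inside a thin collar of $\d D$ disjoint from $K$, so that $K$ is forced onto the inner side of $B$.

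To set this up, I would first use that $X$ is compact Hausdorff, hence normal, to choose an open set $W\supset\d D$ with $\overline W\cap K=\emptyset$. Covering the compact set $\d D$ by finitely many Jordan domains $U_1,\dots,U_m\subseteq W$ (Basis axiom), arranged cyclically so that consecutive ones overlap along $\d D$ while non-consecutive ones are disjoint, I pick points $p_1,\dots,p_m$ with $p_j,p_{j+1}$ on the arc $\d D\cap U_j$. By Proposition \ref{Jordan-intersect} there is a component $V_j$ of $U_j\cap D$ that is a Jordan domain with $p_j,p_{j+1}\in\d V_j$, and the Bump Lemma, applied inside $V_j$, yields a simple arc $A_j\subset V_j\subseteq W$ from $p_j$ to $p_{j+1}$, interior to $D$, with $A_j\cap\Gamma$ finite. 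Because non-consecutive $U_j$ are disjoint, the union $B=\bigcup_j A_j$ is a simple closed curve (a final application of Lemma \ref{simplify-path} removes any accidental self-contact near the shared vertices). Then $B\subset\overline D$, so Proposition \ref{subdomain} produces a Jordan domain $\tilde D\subseteq D$ bounded by $B$; its complementary domain $E$ contains $X-\overline D$, and since each open arc of $\d D$ between consecutive $p_j$ is connected, disjoint from $B$, and cannot lie in $\tilde D\subseteq D$, the whole curve $\d D$ lies on the outer side, i.e.\ $\d D\subset\overline E$. Finally $\d\tilde D\cap\Gamma=\bigcup_j(A_j\cap\Gamma)$ is finite.

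The one substantial point, which I expect to be the main obstacle, is the inclusion $K\subseteq\tilde D$. It suffices to show that the part of $D$ lying on the outer side of $B$ is confined to the collar, namely $D\cap E\subseteq\bigcup_j V_j\subseteq W$; since $K\cap\overline W=\emptyset$ this gives $K\cap E=\emptyset$, and as $K\cap B=\emptyset$ we conclude $K\subseteq\tilde D$. To prove the confinement I would argue locally: each arc $A_j$ joins two boundary points of $V_j$ and hence, by Proposition \ref{div-domain}, splits $V_j$ into two Jordan subdomains, the ``outer'' one being the sliver bounded by $A_j$ and the arc $\d D\cap U_j$. The claim is that these finitely many slivers exhaust $D\cap E$, i.e.\ that $B$ and $\d D$ cobound exactly the union of the $V_j$. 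Establishing this patching rigorously --- equivalently, that the inscribed curve $B$ separates $\d D$ from the core of $D$ rather than bounding some spurious domain buried in the collar --- is the delicate step; by contrast the finiteness of $B\cap\Gamma$ and the simplicity of $B$ are routine consequences of the Bump Lemma and Lemma \ref{simplify-path}.
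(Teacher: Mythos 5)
Your construction of the inscribed bump polygon $B$ and the finiteness of $B\cap\Gamma$ are fine, but the proof has a genuine gap precisely where you flag it: the containment $K\subseteq\tilde D$, equivalently the ``patching'' claim that $D\cap E\subseteq\bigcup_j V_j$, is never established, and nothing in your outline supplies it. With the all-at-once construction this is not a routine verification: a priori the curve $B$, assembled from $m$ independent local constructions, could bound on its inner side some spurious domain trapped in the collar, and ruling this out is exactly the kind of global separation statement the corollary is after, so you cannot defer it. There are also two smaller soft spots: arranging that non-consecutive $U_j$ are pairwise disjoint \emph{in $X$} (not just that their traces on $\d D$ are far apart) needs an argument, and Lemma \ref{simplify-path} cannot be invoked as stated to repair self-contacts of a \emph{closed} curve --- it requires distinct endpoints, and its output need not pass through the vertices $p_j$ anymore, which your subsequent bookkeeping relies on.

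The paper sidesteps the patching problem entirely by making the construction iterative rather than simultaneous. Subdivide $\d D$ into finitely many arcs, each contained in a Jordan domain $U$ disjoint from $K$, and replace them \emph{one at a time}: for the current Jordan domain and the current arc $A$ with endpoints $a,b$, the Bump Lemma produces a replacement arc $C$ inside the component $V$ of $U\cap D$ given by Proposition \ref{Jordan-intersect}, with $C\cap\Gamma$ finite; then Proposition \ref{div-domain} says $C$ splits the current domain into exactly two Jordan domains, and the one cut off (bounded by $C$ and $A$) lies in $U$, hence misses $K$, so $K$ stays in the other one. After finitely many such steps every original arc has been replaced, the boundary of the final domain is a finite union of arcs each meeting $\Gamma$ in finitely many points, and $K$ has remained inside at every stage by induction. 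Each step is controlled by a single application of Proposition \ref{div-domain}, which is precisely the local statement your ``sliver'' picture tries to globalize; to make your simultaneous version rigorous you would essentially have to reproduce this induction anyway, so it is cleaner to perform the replacements sequentially from the start.
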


\begin{proof}
  Every point $x\in\d D$ has a Jordan domain neighborhood $U_x$ disjoint from $K$.
  By compactness, $\d D$ can be covered with a finite number of such neighborhoods.
  Divide $\d D$ into arcs so that every arc of this subdivision is contained in some $U_x$.  
  Consider an arc $A$ of our subdivision with endpoints $a$ and $b$.
  By construction, there is a Jordan domain $U$ containing $A$ and disjoint from $K$.
  Let $V$ be a component of $U\cap D$, whose boundary contains $A$
  (such component exists by Proposition \ref{Jordan-intersect}).
  By the Bump Lemma, Lemma \ref{l.bump}, there is a simple
  curve $C$ connecting $a$ with $b$, lying entirely in $V$, except for
  the endpoints, and intersecting $\Gamma$ in finitely many points
  (we may need to replace $a$ and $b$ with some nearby points, bounding a
  slightly larger arc, to make the Bump Lemma applicable).
  Replace $A$ with $C$ in $\d D$.
  Then we obtain a new Jordan curve.
  It is straightforward to check that the Jordan domain $D'$ bounded by the 
  new Jordan curve and contained in $D$ satisfies the following property: 
  $K\subset D'\subset D$. 
  Indeed, by Proposition \ref{div-domain}, the curve $C$ divides $D$
  into two Jordan domains $D'$ and $D''$. 
  The domain $D''$ is contained in $U$, hence is disjoint from $K$.
  It remains to repeat the same procedure with other arcs of our subdivision.
\end{proof}

If two simple closed curves have only finitely many
intersection points, then one can perturb one of the curves to
make the intersections simple:

\begin{prop}
\label{p.make_simple_intersection}
  Let $\Gamma$ be a simple closed curve, and $D$ a Jordan domain
  such that $\d D\cap \Gamma$ is finite.
  Then, for any compact subset $K\subset D$, there exists a Jordan domain
  $\tilde D$ such that $\d\tilde D$ and $\Gamma$ have only simple intersections,
  and $K\subset\tilde D\subset D$.
\end{prop}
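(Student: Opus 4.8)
The plan is to leave $\partial D$ untouched away from $\Gamma$ and to perform a purely local surgery near each of the finitely many intersection points, always pushing $\partial D$ slightly toward the interior of $D$, so that every intersection becomes a transverse crossing. First I would isolate the intersection points. Write $\partial D\cap\Gamma=\{p_1,\dots,p_m\}$. Since $K$ is a compact subset of the open set $D$ while the $p_i$ lie on $\partial D$, the Basis axiom lets me choose pairwise disjoint Jordan domains $N_1,\dots,N_m$ with $p_i\in N_i$ and $\overline{N_i}\cap K=\emptyset$, each so small that $\partial D\cap\overline{N_i}$ is a single arc $\sigma_i$ and $\Gamma\cap\overline{N_i}$ is a single arc $\tau_i$, with $p_i$ interior to both, $\sigma_i\cap\tau_i=\{p_i\}$, and both arcs meeting $\partial N_i$ only at their four distinct endpoints $u_i,v_i$ (of $\sigma_i$) and $s_i,t_i$ (of $\tau_i$). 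Existence of such $N_i$ is the routine fact that a simple closed curve is locally an arc, combined with the finiteness of $\partial D\cap\Gamma$ to isolate $p_i$. By Proposition \ref{div-domain}, $\sigma_i$ cuts $N_i$ into two Jordan domains, exactly one of which, call it $E_i$, is contained in $D$ (the other lies in $X-\overline D$, since $p_i\in\partial D$); let $\rho_i$ be the arc of $\partial N_i$ bounding $E_i$.

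Next I would classify each $p_i$ by the cyclic position of the four boundary points on $\partial N_i$. If $u_i,s_i,v_i,t_i$ alternate, then $\sigma_i$ crosses $\tau_i$ transversally at $p_i$, so $\sigma_i\cap\Gamma=\{p_i\}$ is already a transverse crossing and I leave $\partial D$ unchanged there. Otherwise $s_i$ and $t_i$ lie in a single arc of $\partial N_i-\{u_i,v_i\}$, and two subcases arise. If that arc is not $\rho_i$, then $\tau_i\cap\overline{E_i}=\{p_i\}$, so $\Gamma$ does not enter $E_i$, and I replace $\sigma_i$ by a simple curve $\sigma_i'$ joining $u_i$ to $v_i$ with interior in $E_i$ and disjoint from $\Gamma$; such a curve exists by the Extension property (Theorem \ref{ext-prty}) applied to the Jordan domain $E_i$. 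If that arc is $\rho_i$, then $\tau_i-\{p_i\}$ consists of two penetrating arcs $\beta',\beta''$ into $E_i$ with endpoints $s_i,t_i\in\rho_i$; here I would cut $E_i$ along $\beta'$ and $\beta''$ (Proposition \ref{div-domain}) into Jordan subdomains and build $\sigma_i'$ from $u_i$ to $v_i$ as a concatenation of arcs, each lying in one subdomain and crossing one penetrating arc at a single interior point, using the Scylla and Charybdis Lemma (Lemma \ref{l.Scylla_and_Charybdis}) to keep these arcs disjoint from $\Gamma$ except at the prescribed crossings. Because consecutive pieces lie in adjacent subdomains, hence on opposite sides of the penetrating arc, each crossing is automatically transverse, and the net effect is to convert the tangency at $p_i$ into two transverse crossings. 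In every subcase $\sigma_i'\subseteq\overline{E_i}\subseteq\overline D$ with interior in $E_i\subseteq D$, and $\sigma_i'\cap\Gamma=\sigma_i'\cap\tau_i$ is a finite set of transverse crossings.

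Finally I would reassemble. Replacing each modified $\sigma_i$ by $\sigma_i'$ inside the disjoint domains $N_i$ yields a simple closed curve $\Gamma'$ lying in $\overline D$ (its interior pieces lie in the disjoint $E_i\subseteq N_i$, so it is genuinely simple); by Proposition \ref{subdomain} it bounds a Jordan domain $\tilde D\subseteq D$. Since $\Gamma'$ agrees with $\partial D$ outside $\bigcup\overline{N_i}$, the symmetric difference of $\tilde D$ and $D$ is contained in $\bigcup\overline{N_i}$, which is disjoint from $K$; hence $K\subset\tilde D\subset D$. All points of $\partial\tilde D\cap\Gamma$ lie in $\bigcup N_i$ and are transverse crossings by construction, so $\partial\tilde D\cap\Gamma$ is finite; and since each component of $\partial\tilde D-\Gamma$ is connected and disjoint from $\Gamma$, it lies in a single Jordan domain of $\Gamma$, while two adjacent components meet at a transverse crossing and therefore lie on opposite sides. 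Thus $\partial\tilde D$ and $\Gamma$ have only simple intersections.

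I expect the main obstacle to be the tangential subcase in which $\Gamma$ penetrates $E_i$: one must verify that the two penetrating arcs divide $E_i$ cleanly into Jordan subdomains, that the concatenated curve $\sigma_i'$ can be arranged to be simple with exactly transverse crossings and no spurious intersections, and that it stays on the $D$-side so that $\tilde D\subseteq D$ is preserved. The accompanying bookkeeping, namely checking that local transversality at every crossing forces the global alternation condition and that the reassembled curve is genuinely simple and bounds the expected domain, is the part requiring the most care.
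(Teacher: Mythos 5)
Your overall strategy (leave $\partial D$ alone away from $\Gamma$ and do an inward local surgery near each intersection point) is the same as the paper's, but your very first step contains a genuine gap: the claim that each $p_i$ has a Jordan neighborhood $N_i$ such that $\partial D\cap\overline{N_i}$ and $\Gamma\cap\overline{N_i}$ are single arcs, properly embedded in $\overline{N_i}$ (interiors in $N_i$, endpoints at four distinct points of $\partial N_i$), meeting only at $p_i$. This is not the ``routine fact that a simple closed curve is locally an arc.'' Local arc structure is a statement about neighborhoods \emph{inside the curve}; what you need is a statement about ambient neighborhoods, and it can fail for every member of a given neighborhood basis: already in the plane, if $\partial D$ contains the graph of $x\sin(1/x)$ with $p_i$ at the origin, then $\partial D$ meets every small round disk centered at $p_i$ in many components, not in a single arc. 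Producing \emph{some} neighborhood with your properties is a local Schoenflies-type statement, which in the plane is a real theorem and in the axiomatic space $X$ of this paper is essentially of the same depth as what is being proved; none of the tools available here (Proposition \ref{Jordan-intersect}, Proposition \ref{div-domain}, Lemma \ref{l.bump}) delivers it. Everything downstream in your argument --- cutting $N_i$ with Proposition \ref{div-domain}, reading off transversality from the cyclic order of $u_i,s_i,v_i,t_i$ on $\partial N_i$, the disjointness of $E_i$ from $\partial D$, and the simplicity of the reassembled curve --- leans on this clean local picture, so the proof as written does not go through.

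Note how the paper's own proof sidesteps exactly this point: it isolates a bad intersection point only set-theoretically, choosing $V$ with $V\cap\partial D\cap\Gamma=\{x\}$ (which \emph{is} routine, since $\partial D\cap\Gamma$ is finite and $X$ is Hausdorff), and takes an arc $A$ of $\partial D$ through $x$ lying in $V$ --- this needs only continuity of the parameterization of $\partial D$, and $A$ is \emph{not} required to be all of $\partial D\cap V$. The replacement arc $A'$ is then produced by the construction in the Bump Lemma (Lemma \ref{l.bump}), whose crossing-component and Scylla-and-Charybdis machinery is precisely what copes with an arbitrarily wild intersection of $\Gamma$ with a neighborhood; since $A'$ has interior in $D$, it is automatically disjoint from $\partial D$, so $(\partial D-A)\cup A'$ is a simple closed curve, and an induction on the number of non-simple intersection points finishes. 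A secondary weakness in your write-up, which the same machinery is designed to address, is the final claim that a ``transverse crossing'' in your local combinatorial sense forces two adjacent components of $\partial\tilde D-\Gamma$ to lie in \emph{different Jordan domains bounded by $\Gamma$}: in this axiomatic setting it is not automatic that the two local sides of an arc of $\Gamma$ lie in different global complementary components of $\Gamma$, so ``automatically transverse'' also requires an argument rather than an assertion.
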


\begin{proof}
  Indeed, let $x$ be an intersection point of $\Gamma$ and $\d D$ such that
  the two components of $\d D-\Gamma$ having $x$ as a limit point
  lie on the same side of $\Gamma$,
  i.e. in the same Jordan domain bounded by $\Gamma$.
  Let $V$ be a small neighborhood of $x$ such that $V$ is disjoint from $K$ and
  $V\cap\d D\cap\Gamma=\{x\}$.
  Consider an arc $A$ of $\d D$ lying in $V$ and containing $x$
  in its interior (taken in $\d D$).
  There is a simple curve $A'\subset V$ connecting the endpoints of $A$
  that lies in $V$, except for the endpoints,
  and has only simple intersections with $\Gamma$
  (a construction of such curve is given in the proof of the Bump Lemma,
  Lemma \ref{l.bump}).

  Let $D'$ be the Jordan domain bounded by $(\d D-A)\cup A'$
  and not containing $x$.
  Then the number of non-simple intersection points in $\d D'\cap\Gamma$ is less
  than that in $\d D\cap\Gamma$.
  Moreover, we have $K\subset D'\subset D$.
  Proceed in the same way to remove other non-simple intersections.
\end{proof}

The Covering property (Theorem \ref{cov-prty}) follows from
Proposition \ref{p.make_simple_intersection}.

\section{Quadrilaterals and grids}

As in the preceding sections,
consider a compact, connected, locally path connected, Hausdorff topological
space $X$ satisfying the Jordan domain axiom and the Basis axiom.
A {\em quadrilateral} $Q$ in $X$ is defined as a Jordan domain
with a distinguished quadruple of points $a,b,c,d\in\d Q$
(appearing on $\d Q$ in this cyclic order) called the {\em vertices} of $Q$.
The four vertices divide $\d Q$ into four (open in $\d Q$) arcs called
the {\em edges} of $Q$.
We will refer to the edges $ab$ and $cd$ as the {\em vertical edges}, and to the
edges $bc$ and $ad$ as the {\em horizontal edges}, although this terminology depends,
of course, on the labeling of vertices.

A simple curve $C$ is called {\em horizontal} with respect to a
quadrilateral $Q$ if $C\subseteq\overline Q$,
the endpoints of $C$ belong to different vertical edges, and $C$
intersects $\d Q$ only by the endpoints.
Similarly, a simple curve $C$ is called {\em vertical} with respect to $Q$ if
$C\subseteq\overline Q$, the endpoints of $C$ belong to different
horizontal edges, and $C$ intersects $\d Q$ only by the endpoints.
Define a {\em grid} in the quadrilateral $Q$ as a collection of
finitely many horizontal curves and finitely many vertical curves
such that two different horizontal curves and two different vertical curves
are disjoint, and every horizontal curve meets every vertical curve
at exactly one point.
A repeated application of Proposition \ref{div-domain} yields that a grid
consisting of $n$ horizontal and $m$ vertical curves divides $Q$ into
$(n+1)(m+1)$ Jordan domains called {\em cells} of the grid.
We will prove the existence of grids with certain properties.
To this end, we will need the following proposition, which is
a corrected version of a statement from \cite{Moore_postulates}.

\begin{prop}
\label{curve2grid}
  Let $\Gamma$ be a simple closed curve such that
  $\Gamma\cap\d Q$ is nonempty, contains simple intersections only
  (in particular, is finite), and does not contain vertices of $Q$.
  Then there exists a grid $G$ in $Q$ such that $\Gamma\cap\overline Q$ is
  contained in the union of all horizontal and vertical curves of $G$.
\end{prop}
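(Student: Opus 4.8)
The plan is to analyze the figure $\Gamma\cap\overline Q$ and then absorb it into a grid one curve at a time. First I would record its structure: since $\Gamma\cap\d Q$ is finite and consists of simple intersections away from the vertices, $\Gamma\cap\overline Q$ is a finite disjoint union of simple arcs $A_1,\dots,A_k$, each meeting $\d Q$ exactly in its two endpoints, which lie in the interiors of edges. By Proposition \ref{crossing}, two disjoint such arcs cannot have interleaving endpoints on $\d Q$; hence the chords joining the endpoint pairs are pairwise non-crossing, giving a nested/parallel combinatorial structure that I will exploit to order the construction. I would then classify each $A_i$ according to the edges carrying its endpoints: \emph{horizontal} (both on the two vertical edges), \emph{vertical} (both on the two horizontal edges), \emph{mixed} (one vertical, one horizontal edge), or \emph{same-edge}.

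Next I would turn each arc into a union of grid-curve segments. A horizontal arc already is a horizontal curve and a vertical arc a vertical curve, so these are used directly, since they meet $\d Q$ only in their endpoints as required. For a mixed arc I would pick an interior corner point $r$ splitting $A_i$ into a piece destined to lie on a horizontal curve and a piece destined to lie on a vertical curve; using the Extension property (Theorem \ref{ext-prty}) I extend the first piece from $r$ to the opposite vertical edge and the second from $r$ to the opposite horizontal edge, so that the two completed curves meet only at $r$. A same-edge arc with both endpoints on a vertical edge I would split at two corner points into a horizontal--vertical--horizontal chain, producing two horizontal curves and one vertical curve, and dually for one based on a horizontal edge. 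In each case the extensions can be routed inside the Jordan subdomains cut off by $A_i$ (Proposition \ref{div-domain}), which keeps them simple and disjoint from the unwanted pieces.

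To assemble a bona fide grid I would add the arcs one at a time in an order compatible with the non-crossing nesting, maintaining at each stage a partial grid whose cells are Jordan domains; each new arc is processed inside the single current cell containing it, and its extensions are drawn there by Theorem \ref{ext-prty} so as to cross each already-present transverse curve exactly once and to stay disjoint from each parallel one. If at the end some horizontal and vertical curves fail to be mutually transverse in a single point, I would repair this either by rerouting the offending extension through the cell decomposition or by an uncrossing surgery that swaps sub-arcs at a superfluous intersection, arranged so that no $\Gamma$-piece is disturbed. Finally, adding a few filler horizontal and vertical curves (again via Theorem \ref{ext-prty}) completes the family to a grid whose curve-union contains every $A_i$, hence all of $\Gamma\cap\overline Q$.

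The main obstacle I anticipate is exactly the global grid condition that every horizontal curve meet every vertical curve in a single point while all prescribed $\Gamma$-pieces stay on the curves: the $\Gamma$-pieces are rigid, and the corner-splitting of mixed and same-edge arcs injects new curves that must be threaded through the existing configuration without producing second intersections or forcing two like curves to cross. Guaranteeing simultaneously the exactly-one-crossing property and the pairwise disjointness of like curves is the delicate heart of the argument, and I suspect it is precisely the point at which the original statement in \cite{Moore_postulates} required correction; controlling it rests on the non-crossing structure from Proposition \ref{crossing} together with the freedom to route inside cells furnished by Proposition \ref{div-domain} and Theorem \ref{ext-prty}.
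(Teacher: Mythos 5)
You have correctly identified the two structural inputs --- the non-crossing property of endpoint pairs coming from Proposition \ref{crossing} and the Extension property (Theorem \ref{ext-prty}) --- but your proposal has a genuine gap exactly where you admit one: the assembly of the extended pieces into an actual grid. Your plan is to insert arcs one at a time and, when a horizontal and a vertical curve fail to meet exactly once (or two like curves cross), to fix this by ``rerouting'' or ``uncrossing surgery''. This is not an argument: you maintain no invariant as arcs are added, you give no reason the surgeries cannot create new superfluous intersections or detach a curve from the rigid $\Gamma$-piece it is required to contain, and you make no termination claim. There is also a concrete trap that local repair cannot escape: when you extend a piece of one arc across $Q$, the extension may be forced to cross another arc $A_j$ of $\Gamma$ (an arc that runs across the cell being traversed separates that cell, by Proposition \ref{div-domain}, so no rerouting inside the cell avoids it); if $A_j$ has been assigned to a curve of the same type, this single forced crossing already violates the disjointness of like curves. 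So the assignment of arc-portions to horizontal versus vertical curves, and the interleaving pattern of all the curves, is a global combinatorial problem that must be solved consistently in advance --- your arc-by-arc negotiation leaves it unsolved.

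The paper resolves precisely this difficulty by a detour through a planar model, which is where the two approaches genuinely diverge. One transfers the finite set $P=(\Gamma\cap\d Q)\cup\{a,b,c,d\}$ to a set $P'$ on the boundary of the standard square $Q'$, preserving the cyclic order, and records which pairs are adjacent (joined by an arc of $\Gamma$ in $\overline Q$); by Proposition \ref{crossing} adjacent pairs do not interleave, so they can be joined by disjoint axis-parallel broken lines inside $Q'$, and extending every straight segment of these broken lines to a full chord of $Q'$ yields a genuine straight-line grid $G'$. In the plane, the conditions ``every horizontal meets every vertical exactly once'' and ``like curves are disjoint'' hold automatically for such chords --- this is exactly what your incremental scheme cannot guarantee. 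One then realizes in $Q$ a grid combinatorially equivalent to $G'$ by repeated use of Theorem \ref{ext-prty}, with the arcs of $\Gamma\cap\overline Q$ playing the role of the broken lines; all global compatibility questions have already been settled in the model, so each application of the Extension property is local and forced. Note that in this solution an arc of $\Gamma$ need not lie on a single horizontal or vertical curve of $G$ (your corner-splitting idea gestures at this), but the decomposition of each arc into horizontal and vertical portions is dictated once and for all by the planar picture rather than improvised arc by arc.
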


\begin{proof}
  The intersection $\Gamma\cap\overline Q$ consists of several simple
  disjoint curves with endpoints on $\d Q$.
  Let $Q'$ be the unit square in the plane.
  There is a bijection between the finite set $P=(\Gamma\cap\d Q)\cup\{a,b,c,d\}$ and
  a finite set $P'$ of points on $\d Q'$ containing all vertices of $Q'$
  such that vertices correspond to vertices and the bijection preserves the
  cyclic order.
  On $P'$, we can define the {\em adjacency relation} as follows:
  two points are called {\em adjacent} if the corresponding points in $P$
  are connected by an arc of $\Gamma$ lying in $\overline Q$.
  It follows from Proposition \ref{crossing} that a pair of adjacent
  points cannot separate another pair of adjacent points in $\d Q'$.

  An {\em $xy$-curve} in the plane is defined as a broken line consisting
  of intervals parallel to the $x$-axis or to the $y$-axis.
  Clearly, we can connect all pairs of adjacent points in $P'$
  by disjoint $xy$-curves lying entirely in $Q'$, except for the endpoints.
  This set of disjoint $xy$-curves lies in the union of all vertical
  and horizontal intervals of some grid $G'$ in $Q'$
  consisting of vertical, i.e. parallel to the $y$-axis, straight line intervals
  and horizontal, i.e. parallel to the $x$-axis, straight line intervals.
  To construct $G'$, it suffices to extend all intervals in all $xy$-curves
  that we have.

  Using the Extension property (Theorem \ref{ext-prty})
  several times, we can extend $\Gamma\cap\overline Q$
  to a grid ``combinatorially equivalent'' (in an obvious sense) to the
  grid $G'$ in $Q'$.
\end{proof}

The following result is crucial for the proof of Theorem \ref{moore-theory}.

\begin{thm}
  \label{subordinate-grid}
  Let $\Uc$ be any open covering of $\overline Q$.
  Then there exists a grid $G$ in $Q$ that is {\em subordinate} to $\Uc$, i.e.
  such that the closure of every cell of $G$ is contained in some element of $\Uc$.
\end{thm}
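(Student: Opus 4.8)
The plan is to reduce the statement to a purely combinatorial routing problem in the plane, exactly as in the proof of Proposition \ref{curve2grid}, after first replacing the given covering by a well-behaved finite family of Jordan domains. First I would use compactness of $\overline Q$ together with the Basis axiom to produce a finite subcovering by Jordan domains with controlled closures. Since $X$ is compact Hausdorff, it is regular, so for each $x\in\overline Q$ I can choose $U_x\in\Uc$ with $x\in U_x$ and then a basic Jordan domain $W_x\ni x$ with $\overline{W_x}\subseteq U_x$. Extracting a finite subcovering $W_1,\dots,W_k$ of $\overline Q$, I obtain Jordan domains whose closures are each contained in some element of $\Uc$. This reduces the theorem to producing a grid $G$ each of whose cells is contained in some $W_i$: for then the closure of the cell lies in $\overline{W_i}$, hence in the corresponding element of $\Uc$.

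Next I would arrange that the curves $\d W_1,\dots,\d W_k$ meet $\d Q$ and one another only in simple intersections. Using Corollary \ref{bump_curve} and Proposition \ref{p.make_simple_intersection} I can shrink each $W_i$ slightly, keeping a prescribed compact core inside it, so that $\d W_i$ has only simple intersections with $\d Q$ and with the already-adjusted boundaries $\d W_1,\dots,\d W_{i-1}$. The only delicate point here is that the shrinking must preserve the covering property; this I would guarantee by starting in the first step from a cover by slightly larger domains and keeping the compact cores large enough that they still cover $\overline Q$ throughout the finitely many adjustments.

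The heart of the argument is then to incorporate the whole family $\Gamma:=\bigcup_i(\d W_i\cap\overline Q)$ into a single grid, generalizing Proposition \ref{curve2grid} from one simple closed curve to a finite family with pairwise simple intersections. As in the proof of that proposition, I would pass to the unit square $Q'$: record the cyclic order on $\d Q$ of the finite set $\Gamma\cap\d Q$ together with the vertices of $Q$, and record the combinatorics of how the arcs of $\Gamma$ inside $Q$ join these boundary points and cross one another. Proposition \ref{crossing} constrains these crossings so that the resulting planar picture is realizable: I can redraw the arcs in $Q'$ as $xy$-curves meeting only at crossings, place every crossing at a lattice point of a sufficiently fine rectangular grid $G'$, and extend all the resulting segments to the full horizontal and vertical lines of $G'$. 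Pulling $G'$ back by repeated use of the Extension property (Theorem \ref{ext-prty}), exactly as in Proposition \ref{curve2grid}, would yield a grid $G$ in $Q$ whose horizontal and vertical curves contain all of $\Gamma$.

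The main obstacle I anticipate is precisely this last construction: I must check that the curves pulled back into $\overline Q$ retain the exact grid combinatorics --- that two horizontal (resp. vertical) curves stay disjoint, that each horizontal meets each vertical in exactly one point, and that the prescribed arcs of $\Gamma$ really lie along the grid lines. These facts do not come for free from the Extension property alone; as in Proposition \ref{curve2grid} they follow from Proposition \ref{crossing} and from controlling, step by step, the Jordan subdomains in which each new curve is drawn. Granting the grid $G$, the conclusion is immediate: each cell of $G$ is a connected subset of $\overline Q$ disjoint from $\bigcup_i\d W_i$, so a cell meeting $W_i$ is contained in $W_i$; since the $W_i$ cover $\overline Q$, every cell lies in some $W_i$, and its closure lies in $\overline{W_i}$ and hence in an element of $\Uc$. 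Thus $G$ is subordinate to $\Uc$.
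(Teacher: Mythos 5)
Your overall strategy differs from the paper's in a way that is not cosmetic. You try to build one grid that simultaneously contains all the boundary curves $\d W_1,\dots,\d W_k$ of a shrunken finite subcover. The paper instead proceeds by induction on the number of covering elements: it incorporates only \emph{one} curve $\d V$ into a grid via Proposition \ref{curve2grid}, and then recurses into the cells not covered by $V$, where the remaining $n-1$ elements form a cover and the induction hypothesis applies. That inductive structure is exactly what lets the paper avoid the step on which your proposal founders.

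The gap is the claimed multi-curve generalization of Proposition \ref{curve2grid}. As proved in the paper, that proposition depends essentially on the arcs of $\Gamma\cap\overline Q$ being pairwise \emph{disjoint}: they form a non-crossing chord diagram on $\d Q'$ (non-crossing precisely by Proposition \ref{crossing}), and such a diagram is trivially realizable by disjoint $xy$-curves in the unit square, after which the Extension property transfers the picture back to $Q$. Once several curves $\d W_i$ are present, their arcs can cross inside $Q$, and three problems appear that the single-curve argument never faces. First, to redraw the arrangement in $Q'$ you must know that the abstract crossing pattern of the arcs in $Q$ (which arcs cross which, in which order along each arc) is realizable by a planar arrangement; this planarity is essentially the conclusion of Theorem \ref{moore-theory} itself, namely that $\overline Q$ is a disk, so invoking it here is close to circular, and Proposition \ref{crossing} alone does not supply it. Second, a grid forces every crossing to be a horizontal--vertical intersection, so if three of the curves $\d W_i$ pass through a common point (which pairwise simple intersections do not exclude), no grid can contain all three locally; the general-position perturbation you would need is not provided by Proposition \ref{p.make_simple_intersection} or Corollary \ref{bump_curve}, both of which are stated for a single curve $\Gamma$. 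Third, a curve $\d W_i$ lying entirely inside $Q$ has no endpoints on $\d Q$, and Proposition \ref{curve2grid} (which assumes $\Gamma\cap\d Q\ne\emptyset$) says nothing about it. Your preliminary shrinking step (using regularity to get $\overline{W_x}\subseteq U_x$) and your final deduction (cells disjoint from all $\d W_i$ lie in some $W_i$, hence their closures lie in elements of $\Uc$) are both sound, but the entire weight of the proof rests on the unproven multi-curve grid construction, and acknowledging it as ``the main obstacle'' does not discharge it.
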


\begin{proof}
  By the Basis axiom, we can assume that $\Uc$ consists of Jordan domains.
  By compactness, we can also assume that $\Uc$ is finite.
  We will now prove the following statement by induction on $n$:
  for every quadrilateral $Q$ and every covering $\Uc$ of $\overline Q$
  by $n$ Jordan domains, there exists a grid in $Q$ subordinate to $\Uc$.
  The base of induction is obvious: if $\Uc$ consists of only one
  Jordan domain $U$, then $\overline Q\subset U$, and the empty grid works.

  We can now perform the induction step.
  Let $U\in\Uc$ be a Jordan domain containing the vertex $a$ of $Q$.
  By Corollary \ref{bump_curve} and Proposition \ref{p.make_simple_intersection},
  there is a Jordan domain $V\subseteq U$ such that
  $(\Uc-\{U\})\cup\{V\}$ is still a covering of $\overline Q$, and
  $\d V$ has only simple intersections with $\d Q$.
  By Proposition \ref{curve2grid}, there is a grid $G_0$ in $Q$
  such that $\d V$ is contained in the union of all horizontal and all
  vertical curves of $G_0$.
  Let $C$ be any cell of $G_0$ not covered by $V$ (hence disjoint from $V$).
  Since $\overline C$ is covered by $\Uc-\{U\}$, we can use the
  induction hypothesis to conclude that there is a grid $G_C$ in $C$
  subordinate to $\Uc$.
  In this way, we get grids subordinate to $\Uc$ in all cells of $G_0$.
  It remains to use the Extension property (Theorem \ref{ext-prty})
  to extend all these grids to
  a single grid in $Q$, which would also be subordinate to $\Uc$.
\end{proof}

\begin{proof}[Proof of Theorem \ref{moore-theory}]
  Consider an arbitrary quadrilateral $Q$ in $X$.
  It suffices to prove that the closure of $Q$ is
  homeomorphic to the closed disk or, equivalently, to the
  standard square $[0,1]\times [0,1]$.

  By the Basis axiom, there is a countable basis $\Bc$ of the topology in $X$.
  There are countably many finite open coverings of $\overline Q$ by
  elements of $\Bc$.
  Number all such coverings by natural numbers.
  We will define a sequence of grids $G_n$ in $Q$ by induction on $n$.
  For $n=1$, we just take the empty grid, the one that does not have any
  horizontal or vertical curves.
  Suppose now that $G_n$ is defined.
  Let $\Uc_n$ be the $n$-th covering of $\overline Q$.
  Using Theorem \ref{subordinate-grid}, we can find a grid in each cell of
  $G_n$ that is subordinate to $\Uc_n$.
  Using the Extension property (Theorem \ref{ext-prty}),
  we can extend these grids to a single grid $G_{n+1}$ in $Q$.
  Thus $G_{n+1}$ contains $G_n$ and is subordinate to $\Uc_n$.

  Consider any pair of different points $x,y\in Q$.
  There exists $n$ such that $x$ and $y$ do not belong to the closure of the
  same cell in $G_{n+1}$.
  Indeed, let us first define a covering of $\overline Q$ as follows.
  For any $z\in\overline Q$, choose $U_z$ to be an element of $\Bc$
  that contains $z$ but does not include the set $\{x,y\}$.
  The sets $U_z$ form an open covering of $\overline Q$.
  Since $\overline Q$ is compact, there is a finite subcovering.
  This finite subcovering coincides with $\Uc_n$ for some $n$.
  Then, by our construction, the closure of every cell in $G_{n+1}$
  is contained in a single element of $\Uc_n$.
  However, the set $\{x,y\}$ is not contained in a single element of $\Uc_n$.
  Therefore, $\{x,y\}$ cannot belong to the closure of a single cell.

  Consider a nested sequence $C_1\subset C_2\supset\dots\supset C_n\supset\dots$,
  where $C_n$ is a cell of $G_n$.
  We claim that the intersection of the closures $\overline C_n$ is a single point.
  Indeed, this intersection is nonempty, since $\overline C_n$ form a nested
  sequence of nonempty compact sets.
  On the other hand, as we saw, there is no pair $\{x,y\}$ of different points
  contained in all $\overline C_n$.

  Consider the standard square $[0,1]\times [0,1]$ and a sequence $H_n$ of
  grids in it with the following properties:
  \begin{enumerate}
  \item all horizontal curves in $H_n$ are horizontal straight intervals, all
    vertical curves in $H_n$ are vertical straight intervals;
  \item the grid $H_n$ has the same number of horizontal curves and the same
    number of vertical curves as $G_n$, thus there is a natural one-to-one
    correspondence between cells of $H_n$ and cells of $G_n$ respecting
    ``combinatorics'', i.e. the cells in $H_n$ corresponding to adjacent
    cells in $G_n$ are also adjacent;
  \item the grid $H_{n+1}$ contains $H_n$; moreover, if a cell of $H_{n+1}$ is in
    a cell of $H_n$, then there is a similar inclusion between the corresponding
    cells of
    $G_{n+1}$ and $G_n$;
  \item between any horizontal interval of $H_n$ and the next horizontal interval,
    the horizontal intervals of $H_{n+1}$
    are equally spaced; similarly, between any vertical interval of $H_n$ and
    the next vertical interval, the vertical
    intervals of $H_{n+1}$ are equally spaced.
\end{enumerate}
It is not hard to see that any nested sequence of cells $D_n$ of $H_n$ converges
to a point:
$\bigcap\overline D_n=\{pt\}$.

We can now define a map $\Phi:\overline Q\to [0,1]\times [0,1]$ as follows.
For a point $x\in\overline Q$, there is a nested sequence of cells $C_n$
of $G_n$ such that $\overline C_n$ contains $x$ for all $n$.
Define the point $\Phi(x)$ as the intersection of the closures of the
corresponding cells $D_n$ in $H_n$.
(Note that they also form a nested sequence according to our assumptions).
Clearly, the point $\Phi(x)$ does not depend on a particular choice of the
nested sequence $C_n$ (there can be at most four different choices).
It is also easy to see that $\Phi$ is a homeomorphism between $\overline Q$ and
the standard square $[0,1]\times [0,1]$.
\end{proof}

\section{The Jordan curve theorem}

In this section, we deal with topology of the 2-sphere $S^2$.
One of the most fundamental results in topology of $S^2$ is
the {\em Jordan curve theorem}:
a simple closed curve in the 2-sphere divides the sphere into two connected components.
In this section, we prove a certain generalization of the Jordan curve theorem,
which we need to prove Theorem \ref{moore}.
This generalization is also due (mainly) to Moore.
A proof, however, can be obtained by a slight modification of a modern
standard proof of the Jordan curve theorem.

We will need the following standard facts from algebraic topology,
which we quote without proof:

\begin{thm}[Alexander's duality]
\label{alexander}
Let $U$ be an open subset of the sphere such that $S^2-U$ has $k<\infty$
connected components.
Then the first Betti number $h_1(U)$ is equal to $k-1$.
\end{thm}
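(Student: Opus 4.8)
The plan is to derive the statement from the general form of Alexander duality in the sphere, applied to the compact complement $K=S^2-U$. Since $U$ is open, the set $K$ is closed, hence compact, and by hypothesis it has exactly $k$ connected components. The version of duality I would invoke is the reduced one valid for an \emph{arbitrary} compact subset of $S^n$: $\tilde H_i(S^n-K)\cong\widetilde{\check H}{}^{\,n-i-1}(K)$, where the left side is reduced singular homology and the right side is reduced \v{C}ech cohomology. The passage to \v{C}ech cohomology on the $K$-side is essential, because $K$ is not assumed to be locally contractible or to be part of a CW pair.

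Specializing to $n=2$ and $i=1$, the duality isomorphism becomes $\tilde H_1(U)\cong\widetilde{\check H}{}^{\,0}(K)$. Since reduced and unreduced homology agree in positive degrees, the left-hand group is exactly $H_1(U)$, whose rank is by definition the first Betti number $h_1(U)$. It therefore suffices to compute the rank of $\widetilde{\check H}{}^{\,0}(K)$.

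This last computation is where the hypothesis on the number of components enters. For a compact Hausdorff space the connected components coincide with the quasicomponents, so $\check H^0(K;\Z)$ is free abelian of rank equal to the number of connected components of $K$, namely $k$; passing to reduced cohomology drops the rank by one, giving $\widetilde{\check H}{}^{\,0}(K)\cong\Z^{k-1}$. Combined with the isomorphism above, this yields $h_1(U)=k-1$.

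The main obstacle --- and the reason the result is quoted here rather than proved --- is establishing Alexander duality itself at this level of generality. One cannot apply the CW or smooth-manifold version directly; instead one sets up \v{C}ech cohomology via the direct limit $\check H^*(K)=\varinjlim H^*(N)$ over open neighborhoods $N$ of $K$ in $S^2$, and combines this with excision and the long exact sequence of the pair $(S^2,S^2-K)$ to produce the duality isomorphism. Once that machinery is available, the specialization to $n=2$, $i=1$ carried out above is entirely routine.
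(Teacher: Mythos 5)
Your proposal is correct, but there is nothing in the paper to compare it against: the paper explicitly quotes Theorem \ref{alexander} \emph{without proof}, as a special case of the general Alexander duality theorem, with a pointer to Newman's book \cite{Newman}. So what you wrote is a filling-in of a black box rather than an alternative to a paper argument, and your filling-in is the standard one. The reduction is sound: $K=S^2-U$ is compact; the reduced duality isomorphism $\tilde H_1(S^2-K)\cong\widetilde{\check H}{}^{\,0}(K)$ (the \v{C}ech-cohomology form, valid for arbitrary compact $K$, which is indeed required since $K$ need not be locally contractible) together with $\check H^0(K;\Z)\cong\Z^k$ gives $h_1(U)=k-1$. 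A few small remarks. First, the appeal to quasicomponents is more than is needed: once $K$ has finitely many components, each component is automatically clopen (it is the complement of the union of the finitely many others, which are closed), so continuous maps $K\to\Z$ form $\Z^k$ directly. Second, there are two implicit normalizations: the statement silently assumes $k\ge 1$ (for $U=S^2$ the formula fails, and reduced cohomology of the empty set degenerates) --- a caveat shared by the paper, whose applications always have $K\ne\emptyset$; and the paper computes Betti numbers with real coefficients, so one should either run the duality with $\R$ coefficients or pass from $H_1(U;\Z)\cong\Z^{k-1}$ to $h_1(U)=k-1$ by universal coefficients, both immediate. Finally, note the paper's remark that its statement is phrased so that only the homology of the \emph{open} set $U$ (a smooth 2-manifold, hence triangulable) ever appears; the \v{C}ech machinery on the compact side enters only inside proofs such as yours, which is exactly why the paper relegates the whole matter to a citation.
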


\begin{thm}[Mayer--Vietoris sequence]
\label{t.MV}
Let $U_1$ and $U_2$ be open subsets of the 2-sphere.
Then the homology spaces of $U_1$, $U_2$, $U_1\cap U_2$ and
$U_1\cup U_2$ with real coefficients form the following
exact sequence
$$
0\to H_1(U_1\cap U_2)\to H_1(U_1)\oplus H_1(U_2)\to H_1(U_1\cup U_2)\map{\d_*}
$$
$$
\map{\d_*} H_0(U_1\cap U_2)\to H_0(U_1)\oplus H_0(U_2)\to H_0(U_1\cup U_2)\to 0.
$$
\end{thm}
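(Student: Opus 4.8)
The plan is to recognize this as the degree $\le 1$ portion of the standard Mayer--Vietoris long exact sequence in singular homology and to assemble it from homological algebra. First I would form the short exact sequence of singular chain complexes
\[
0\to C_*(U_1\cap U_2)\to C_*(U_1)\oplus C_*(U_2)\to C_*^{\Uc}(U_1\cup U_2)\to 0,
\]
where $\Uc=\{U_1,U_2\}$ and $C_*^{\Uc}(U_1\cup U_2)$ is the subcomplex of $C_*(U_1\cup U_2)$ spanned by those singular simplices whose image lies entirely in $U_1$ or entirely in $U_2$. The left map sends a chain $c$ to $(c,-c)$ and the right map sends $(a,b)$ to $a+b$. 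Exactness here is purely formal: the first map is injective, the second is surjective by the very definition of $C_*^{\Uc}$, and a pair lies in the kernel of the second map precisely when it has the form $(c,-c)$ with $c$ supported in $U_1\cap U_2$.

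Next I would invoke the small-simplices (barycentric subdivision) theorem, which asserts that the inclusion $C_*^{\Uc}(U_1\cup U_2)\hookrightarrow C_*(U_1\cup U_2)$ is a chain homotopy equivalence and hence induces isomorphisms on homology in every degree. Applying the standard long exact sequence attached to a short exact sequence of chain complexes (the zig-zag lemma) produces a long exact sequence in homology, and substituting $H_n(U_1\cup U_2)$ for the homology of $C_*^{\Uc}(U_1\cup U_2)$ gives the Mayer--Vietoris sequence with connecting homomorphism $\d_*$.

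It remains to cut out the finite piece stated in the theorem. Every one of $U_1$, $U_2$, $U_1\cap U_2$, $U_1\cup U_2$ is an open subset of $S^2$, hence a (possibly disconnected, possibly non-compact) surface without boundary, so $H_n$ vanishes for $n\ge 3$ and the sequence terminates on the right with $H_0(U_1\cup U_2)\to 0$ (the last map is onto because each path component of the union meets $U_1$ or $U_2$). For the leading $0$, the term $H_2(U_1\cup U_2)$ must vanish; this holds whenever $U_1\cup U_2$ is a proper open subset of $S^2$, since a non-compact surface has trivial top homology, and in the intended application all four sets are complements of nonempty closed sets. That vanishing makes the connecting map into $H_1(U_1\cap U_2)$ zero, which is exactly the injectivity encoded by the initial $0$. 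Working with real coefficients throughout yields the displayed exact sequence of real vector spaces.

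The main obstacle is the small-simplices theorem used in the second step: one must build the barycentric subdivision operator together with an explicit chain homotopy and verify that iterated subdivision eventually carries every simplex into a member of $\{U_1,U_2\}$. This subdivision estimate is the only genuinely technical ingredient; everything else is formal homological algebra. Since the paper lists this result among the standard facts quoted without proof, in practice I would cite a standard reference for this step rather than reproduce the subdivision argument in full.
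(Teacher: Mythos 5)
The paper gives no proof of this theorem at all: it is explicitly quoted as a standard fact, with a pointer to Munkres. Your proposal reconstructs precisely the argument that citation refers to --- the short exact sequence of chain complexes built from $C_*^{\Uc}(U_1\cup U_2)$, the small-simplices (barycentric subdivision) theorem, and the zig-zag lemma --- so you are taking the same route the paper intends, and your outline of it is correct, including the correct identification of the subdivision theorem as the only genuinely technical ingredient.

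The one place where you go beyond the paper is worth emphasizing, because it exposes a real defect in the statement as printed. You note that the leading $0$ requires $H_2(U_1\cup U_2)=0$, hence that $U_1\cup U_2$ be a \emph{proper} open subset of $S^2$. Indeed, without that hypothesis the theorem is false: take $U_1$ and $U_2$ to be slightly enlarged open hemispheres, so that $U_1\cup U_2=S^2$ and $U_1\cap U_2$ is an annulus; then $H_1(U_1\cap U_2)\cong\R$ while $H_1(U_1)\oplus H_1(U_2)=0$, so the first map cannot be injective --- the connecting homomorphism $H_2(S^2)\to H_1(U_1\cap U_2)$ is nonzero and the displayed sequence breaks at its first arrow. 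Your added hypothesis is exactly the right fix, and it is satisfied in both places the paper uses Theorem \ref{t.MV}: in Proposition \ref{separate} and in Theorem \ref{Jordan} one has $U_1\cup U_2=S^2-(X_1\cap X_2)$ with $X_1\cap X_2$ nonempty, a condition those arguments already need in order to invoke Alexander duality (Theorem \ref{alexander}). So your proof establishes the corrected statement, which is the one actually applied in the paper.
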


These are partial cases of much more general theorems known by the names given
in parentheses.
Note that no fancy homology theory is needed because open sets in the
sphere are smooth manifolds.
E.g. one can use simplicial homology.
Proofs of the facts cited above (and their generalizations) can be found in standard
textbooks on algebraic topology.
See e.g. \cite{Newman} for Theorem \ref{alexander} and \cite{Munkres} for Theorem \ref{t.MV}.

Let $U_1$ and $U_2$ be open sets in $S^2$.
Then the Betti numbers of $U_1$, $U_2$, $U_1\cap U_2$ and $U_1\cup U_2$
are related as follows:
$$
h_1(U_1\cap U_2)-h_1(U_1)-h_1(U_2)+h_1(U_1\cup U_2)-
$$
$$
-h_0(U_1\cap U_2)+h_0(U_1)+h_0(U_2)
-h_0(U_1\cup U_2) = 0.
$$
This follows immediately from Theorem \ref{t.MV} and the following algebraic fact:
the alternating sum of the dimensions of vector spaces forming an exact sequence
is equal to zero.

The most nontrivial and interesting map in the Mayer--Vietoris exact sequence is
the {\em connecting homomorphism} $\d_*$.
It is defined as follows.
Represent an element of $H_1(U_1\cup U_2)$ by a simplicial cycle
$\beta=\alpha_1-\alpha_2$, where $\alpha_i$ is a simplicial chain supported in $U_i$.
The image $\d_*[\beta]$ is defined as the element of $H_0(U_1\cap U_2)$ represented
by the cycle $\d\alpha_1=\d\alpha_2$ in $U_1\cap U_2$ (note that this cycle is
homologous to zero in $U_1$ and in $U_2$ but, in general, not in $U_1\cap U_2$).
All other maps in the Mayer--Vietoris sequence are induced by the inclusions
$$
U_1\cap U_2\hookrightarrow U_1, U_2\hookrightarrow U_1\cup U_2.
$$

We say that a closed set $Z\subset S^2$ {\em separates} two points of $S^2$
if these two points belong to different connected components of $S^2-Z$.

\begin{prop}
\label{separate}
Suppose that subsets $X_1$ and $X_2$ of $S^2$ are closed and do not
separate points $a$ and $b$ in the sphere.
If $X_1\cap X_2$ is connected, then $X_1\cup X_2$ does not separate $a$ from $b$ either.
\end{prop}

\begin{proof}
Let $U_i$ denote the complement to $X_i$ in the sphere, $i=1,2$.
Thus $U_1$ and $U_2$ are open sets.
Moreover, $a$ and $b$ are in the same component of $U_1$ and in the same
component of $U_2$.
Thus, we can connect $a$ to $b$ by a simple curve $A_i$ lying entirely in $U_i$, $i=1,2$.
Moreover, we can assume that $A_i$ is the support of some simplicial chain
$\alpha_i$ oriented from $a$ to $b$.
Then $\beta=\alpha_1-\alpha_2$ is a simplicial cycle.

By definition of the boundary map (in the Mayer--Vietoris exact sequence)
$$
\d_*:H_1(U_1\cup U_2)\to H_0(U_1\cap U_2),
$$
the homology class of $\d\alpha_1=\d\alpha_2$ is equal to $\d_*([\beta])$,
where $[\beta]$ is the homology class of $\beta$.
On the other hand, $H_1(U_1\cup U_2)=0$ by Alexander's duality and
the fact that $X_1\cap X_2$ is connected.
It follows that $\d\alpha_1$ is a boundary.
But then $0=[\d\alpha_1]=[b]-[a]$, where $[a]$ and $[b]$ are classes of points
$a$ and $b$ in $H_0(U_1\cap U_2)$.
It follows that $a$ and $b$ are in the same connected component of $U_1\cap U_2$,
i.e. are not separated by $X_1\cup X_2$.
\end{proof}

Recall that a map $F:t\mapsto F(t)$ assigning a compact subset $F(t)$ in
a Hausdorff space $X$ to every element $t$ of some topological space $T$
is called {\em upper semicontinuous} if for every $t\in T$
and every open neighborhood $U$ of $F(t)$, there is an open neighborhood $V$
of $t$ with the property $F(t')\subset U$ for all $t'\in V$.

\begin{prop}
\label{usc}
  Consider a map $F$ of the interval $[0,1]$ into the set of compact subsets in $S^2$
  such that $F(t)\cap F(t')=\emptyset$ for $t\ne t'$.
  Then $F$ is upper semi-continuous if and only if for every closed subinterval $A$
  of $[0,1]$, the union
  $$
  F(A)=\bigcup_{t\in A} F(t)
  $$
  is closed.
  The same statement is true if, instead of just subintervals, we consider all
  closed subsets in $[0,1]$.
\end{prop}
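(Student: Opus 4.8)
The plan is to establish the two implications that, together with the trivial observation that any closed subinterval is a closed subset, yield all the asserted equivalences simultaneously. Concretely, I will prove that upper semicontinuity implies that $F(A)$ is closed for every closed (hence compact) subset $A\subseteq[0,1]$, and conversely that closedness of $F(A)$ for every closed \emph{subinterval} $A$ already forces $F$ to be upper semicontinuous. Since the subset version implies the subinterval version for free, this produces the cycle ``u.s.c. $\Rightarrow$ all closed subsets $\Rightarrow$ all closed subintervals $\Rightarrow$ u.s.c.'', proving both formulations of the proposition at once.

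For the first implication I would argue by sequences, which is legitimate because $S^2$ and $[0,1]$ are compact metric spaces. Given a closed $A$ and a point $y$ in the closure of $F(A)$, pick $y_n\in F(t_n)$ with $t_n\in A$ and $y_n\to y$; by compactness of $A$ pass to a subsequence with $t_n\to t\in A$. If $y\notin F(t)$, then, $F(t)$ being compact, I can separate it from $y$ by disjoint open sets $U\supseteq F(t)$ and $W\ni y$; upper semicontinuity gives a neighborhood $V$ of $t$ with $F(t')\subseteq U$ for all $t'\in V$, so $y_n\in F(t_n)\subseteq U$ for large $n$, while also $y_n\in W$ for large $n$, contradicting $U\cap W=\emptyset$. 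Hence $y\in F(t)\subseteq F(A)$, so $F(A)$ is closed. Note that this direction does not use disjointness of the fibers.

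The converse is where the disjointness hypothesis is essential, and this is the step I expect to be the main obstacle. Suppose $F$ fails to be upper semicontinuous at some $t_0$: there is an open $U\supseteq F(t_0)$ and points $t_n\to t_0$, $t_n\ne t_0$, with $y_n\in F(t_n)\setminus U$. Using compactness of $S^2\setminus U$ I may assume $y_n\to y$, so $y\notin U$ and in particular $y\notin F(t_0)$; passing to a monotone subsequence I may also assume $s_1>s_2>\dots\to t_0$ (the decreasing case; the increasing case is symmetric). Now consider the nested closed intervals $A_k=[t_0,s_k]$. For $n\ge k$ we have $s_n\in A_k$, hence $y_n\in F(A_k)$; since $F(A_k)$ is closed and $y_n\to y$, we obtain $y\in F(A_k)$ for every $k$.

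Finally I would exploit disjointness to close the argument. Because the fibers $F(t)$ are pairwise disjoint, the point $y$ lies in $F(\tau)$ for a single value $\tau$; and $y\in F(A_k)$ means that this same $\tau$ lies in $A_k$, for every $k$. Therefore $\tau\in\bigcap_k A_k=\{t_0\}$, so $y\in F(t_0)$, contradicting $y\notin F(t_0)$. The crucial point, and the reason the argument would fail without the disjointness assumption, is exactly this upgrade from $y\in\bigcap_k F(A_k)$ to $y\in F\bigl(\bigcap_k A_k\bigr)=F(t_0)$: in general the intersection of the images is strictly larger than the image of the intersection, and only disjointness of the fibers collapses the witnessing parameters $\tau_k$ to one common value trapped in $\{t_0\}$.
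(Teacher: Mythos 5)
Your proposal is correct and takes essentially the same approach as the paper: the identical cycle of implications (u.s.c.\ $\Rightarrow$ closed for all closed subsets $\Rightarrow$ closed for closed subintervals $\Rightarrow$ u.s.c.), the same sequential argument for the forward direction, and the same crucial use of disjointness to collapse the witnesses $\tau_k$ and identify $\bigcap_k F(A_k)$ with $F\bigl(\bigcap_k A_k\bigr)$. The only cosmetic difference is that the paper proves the converse directly, observing that the nested compact sets $F(A_n)\setminus U$ have empty intersection and hence are eventually empty, whereas you argue by contradiction with a convergent sequence $y_n\to y$ --- dual formulations of one and the same compactness argument.
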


A similar statement holds, in which $[0,1]$ is replaced with $S^1$, with the same proof.

\begin{proof}
  Suppose that $F(A)$ is closed for every closed subinterval $A\subset [0,1]$.
  Choose a point $t_\infty$ in $[0,1]$ and a
  nested sequence $A_n$ of subintervals in $[0,1]$ containing $t_\infty$
  in their relative interiors (taken in $[0,1]$), whose intersection is $\{t_\infty\}$.
  Clearly, the intersection of compact sets $F(A_n)$ is then $F(t_\infty)$
  (this follows from the fact that $F(t)$ is disjoint from $F(t')$ for $t\ne t'$).
  Let $U$ be any open neighborhood of $F(t_\infty)$ in the sphere.
  The sets $F(A_n)-U$ form a nested sequence of compact sets, whose intersection
  is empty.
  Therefore, all these sets must be empty for $n$ big enough, i.e. $F(t)\subset U$
  for all $t\in A_n$.

  Conversely, suppose that the map $F$ is upper semicontinuous.
  Let $A$ be a closed subset of $[0,1]$ (not necessarily a subinterval).
  We need to prove that the set $F(A)$ is closed.
  Indeed, choose any sequence $x_n\in F(t_n)$ that converges to some point $x_\infty$ of the sphere.
  Passing to a subsequence if necessary, we can assume that the sequence $t_n$ converges
  to some $t_\infty\in A$.
  Since the sets $F(t_n)$ accumulate on $F(t_\infty)$, we must have
  $x_\infty\in F(t_\infty)\subset F(A)$.
\end{proof}

\begin{prop}
  \label{usc_conn}
  Consider an upper semicontinuous map $F$
  from $[0,1]$ to the set of compact subsets in $S^2$.
  If $F(t)$ is connected for every $t\in [0,1]$,
  then the union $F[0,1]$ is also connected.
\end{prop}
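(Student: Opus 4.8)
The plan is to argue by contradiction, using the connectedness of the parameter interval $[0,1]$. Suppose the union $F[0,1]$ is disconnected, so that $F[0,1]=K_1\sqcup K_2$ for two disjoint nonempty sets that are relatively closed (equivalently relatively open) in $F[0,1]$. First I would observe that $F[0,1]$ is a closed, hence compact, subset of $S^2$: this is exactly the upper semicontinuity direction of the argument proving Proposition \ref{usc}, which, as one checks, never uses the disjointness of the fibers. Consequently $K_1$ and $K_2$ are disjoint compact subsets of $S^2$, and since $S^2$ is a metric (hence normal) space, they admit disjoint open neighborhoods $O_1\supseteq K_1$ and $O_2\supseteq K_2$.

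The key step is to transfer the splitting of $F[0,1]$ back to a splitting of the parameter interval. For each $t$, the set $F(t)$ is connected and contained in $K_1\cup K_2\subseteq O_1\cup O_2$; since $O_1$ and $O_2$ are disjoint open sets, a nonempty connected set cannot meet both, so $F(t)$ lies entirely in $O_1$ or entirely in $O_2$, and therefore entirely in $K_1$ or entirely in $K_2$. I would then set
$$
A_i=\{\,t\in[0,1]: F(t)\subseteq K_i\,\},\qquad i=1,2,
$$
so that $A_1$ and $A_2$ are disjoint and cover $[0,1]$. They are both nonempty: any point of $K_i$ lies in some fiber $F(t)$, and that fiber, meeting $K_i$, must be contained in $K_i$.

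It remains to show that each $A_i$ is open, and here upper semicontinuity enters directly. If $t\in A_1$, then $F(t)\subseteq K_1\subseteq O_1$, so by upper semicontinuity applied to the open neighborhood $O_1$ of $F(t)$ there is a neighborhood $V$ of $t$ with $F(t')\subseteq O_1$ for all $t'\in V$. Since $F(t')\subseteq K_1\cup K_2$ while $O_1\cap K_2=\emptyset$, this forces $F(t')\subseteq K_1$, i.e. $V\subseteq A_1$; thus $A_1$ is open, and symmetrically so is $A_2$. We have then written $[0,1]$ as a disjoint union of two nonempty open sets, contradicting the connectedness of $[0,1]$.

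I expect the only genuinely delicate point to be the separation of $K_1$ and $K_2$ by disjoint open sets, which is what allows a connected fiber to land on a single side; this rests on the compactness of $F[0,1]$, and hence on the initial observation that upper semicontinuity makes the union closed. (Empty fibers contribute nothing to $F[0,1]$; in the intended applications every fiber is nonempty, and I will assume this throughout so that the $A_i$ genuinely partition $[0,1]$.) Everything else is a routine application of upper semicontinuity together with the connectedness of the interval.
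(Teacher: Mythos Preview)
Your proof is correct and follows essentially the same approach as the paper: both argue by contradiction, use connectedness of each fiber to split $[0,1]$ into two pieces according to which side of the disconnection the fiber lands on, and then invoke connectedness of $[0,1]$. The only cosmetic difference is that you show the two pieces $A_1,A_2$ are open (via separating neighborhoods and the definition of upper semicontinuity), whereas the paper shows they cannot both be closed by locating a point $t\in\overline A\cap B$ and arguing that $F(t)$ would then meet both $F(A)$ and $F(B)$.
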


\begin{proof}
  Assume the contrary: $F[0,1]$ splits into a disjoint union of two 
  closed subsets. 
  Since all $F(t)$, $t\in [0,1]$, are connected, every term of 
  this splitting must be a union of sets $F(t)$, i.e.
  the splitting must have the form $F(A)\sqcup F(B)$,
  where $[0,1]=A\sqcup B$, and both $F(A)$ and $F(B)$ are closed.
  However, since $[0,1]$ is connected, we cannot have that both $A$ and $B$
  are closed.
  Suppose, say, that $\overline A\cap B\ne\emptyset$.
  Then, for every $t\in \overline A\cap B$, we must have
  $F(t)\subset \overline{F(A)}\cap F(B)$,
  which contradicts the fact that $F(A)$ and $F(B)$ are disjoint.
\end{proof}

A subset of $S^2$ is called {\em non-separating} if it does not separate the sphere.
The following theorem generalizes the fact that a simple curve is non-separating
\cite{Jan}.

\begin{thm}
  \label{non-sep}
  Consider an upper semicontinuous map $F$ from $[0,1]$
  to the set of compact connected subsets of the sphere.
  If all $F(t)$ are disjoint and non-separating, then $F[0,1]$ is non-separating.
\end{thm}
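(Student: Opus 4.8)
The plan is to argue by contradiction combined with a bisection (dyadic subdivision) scheme that reduces the separating behaviour of $F[0,1]$ down to arbitrarily small parameter intervals, where the fibers are essentially indistinguishable from a single non-separating continuum. Suppose $F[0,1]$ separates two points $a,b\in S^2$. Since each individual fiber $F(t)$ is compact, connected and non-separating (by hypothesis), the single fiber cannot separate $a$ from $b$. The idea is to combine Proposition \ref{separate} with upper semicontinuity to propagate ``non-separation'' from small subintervals up to the whole interval.

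Concretely, I would set up the bisection as follows. For a closed subinterval $A\subseteq[0,1]$, say that $A$ is \emph{bad} if $F(A)$ separates $a$ from $b$, and \emph{good} otherwise. By assumption $[0,1]$ is bad. The first key claim is that if $A=A_1\cup A_2$ is written as the union of two closed subintervals overlapping in a single point $t_0$ (so $A_1\cap A_2=\{t_0\}$), and if both $A_1$ and $A_2$ are good, then $A$ is good. This is exactly Proposition \ref{separate} applied with $X_1=F(A_1)$ and $X_2=F(A_2)$: both sets are closed (by the upper-semicontinuity criterion of Proposition \ref{usc}), neither separates $a$ from $b$, and their intersection $F(A_1)\cap F(A_2)=F(t_0)$ is connected, so the union $F(A)=X_1\cup X_2$ does not separate $a$ from $b$ either. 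Taking the contrapositive: a bad interval always contains a bad half. Bisecting repeatedly, I obtain a nested sequence of bad closed intervals $A_1\supseteq A_2\supseteq\cdots$ whose lengths tend to zero, hence $\bigcap_n A_n=\{t_\infty\}$ for a single parameter $t_\infty$.

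Now I would derive the contradiction at the limit point. By upper semicontinuity, $F(t_\infty)$ has arbitrarily small closed neighborhoods, and by Proposition \ref{usc} the sets $F(A_n)$ shrink down onto $F(t_\infty)$: for any open neighborhood $U\supseteq F(t_\infty)$, we have $F(A_n)\subseteq U$ for all large $n$. Since $F(t_\infty)$ is compact, connected and non-separating, its complement $S^2-F(t_\infty)$ is connected and open, so $a$ and $b$ lie in one component; as $S^2-F(t_\infty)$ is open and connected, it is path connected, and I can fix a path $\gamma$ from $a$ to $b$ avoiding $F(t_\infty)$. The image $\gamma[0,1]$ is a compact set disjoint from $F(t_\infty)$, so it has a positive distance from $F(t_\infty)$; choosing $U$ to be a small enough neighborhood of $F(t_\infty)$ disjoint from $\gamma[0,1]$, and then $n$ large enough that $F(A_n)\subseteq U$, I find that $\gamma$ connects $a$ to $b$ in $S^2-F(A_n)$. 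Hence $F(A_n)$ does \emph{not} separate $a$ from $b$, i.e. $A_n$ is good, contradicting its badness.

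The main obstacle I anticipate is the bookkeeping in the bisection step, specifically ensuring that the two closed halves of a bad interval really overlap in exactly one point so that $F(A_1)\cap F(A_2)=F(t_0)$ is a single (connected) fiber, which is precisely what makes Proposition \ref{separate} applicable; the disjointness hypothesis $F(t)\cap F(t')=\emptyset$ for $t\ne t'$ is essential here to identify the intersection cleanly. One should double check that $F(A_i)$ is closed for \emph{non-dyadic} endpoints as well, but this is guaranteed by Proposition \ref{usc} for all closed subintervals, so no extra work is needed. The limiting argument is then routine given the non-separation of the single fiber $F(t_\infty)$ and the approximation furnished by upper semicontinuity.
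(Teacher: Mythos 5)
Your proposal is correct and follows essentially the same route as the paper's own proof: assume $F[0,1]$ separates $a$ from $b$, bisect repeatedly using the contrapositive of Proposition \ref{separate} (with the single connected fiber $F(t_0)$ as the intersection, and closedness of $F(A)$ supplied by Proposition \ref{usc}) to get nested bad intervals shrinking to a point $t_\infty$, then contradict badness near $t_\infty$ by fixing a path from $a$ to $b$ avoiding the non-separating fiber $F(t_\infty)$ and noting that $F(A_n)$ eventually lies in a neighborhood of $F(t_\infty)$ disjoint from that path. The only difference is expository: you make explicit the good/bad bookkeeping and the appeal to Proposition \ref{usc} that the paper leaves implicit.
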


\begin{proof}
Assume the contrary: there are two points $a$ and $b$ in the complement to
$F[0,1]$ such that $F[0,1]$ separates $a$ from $b$.
Consider two compact sets $F[0,1/2]$ and $F[1/2,1]$.
Their intersection $F(1/2)$ is connected.
By Proposition \ref{separate}, either $F[0,1/2]$ or $F[1/2,1]$ separates $a$ from $b$.
We can continue the same process to obtain a nested sequence of subintervals
$A_n\subset [0,1]$ such that $F(A_n)$ separate $a$ from $b$ and
$\bigcap A_n$ is a single point $t\in [0,1]$.
However, we know that $F(t)$ is non-separating.
Choose a simple curve $C$ connecting $a$ to $b$ in the complement to $F(t)$.
There is a neighborhood $U$ of $F(t)$ disjoint from $C$.
The sets $F(A_n)$ must be contained in $U$ for all large $n$.
A contradiction with the fact that $F(A_n)$ separates $a$ from $b$.
\end{proof}

The main theorem of this section, which is a generalization of the
Jordan curve theorem, is the following:

\begin{thm}
\label{Jordan}
Consider an upper semicontinuous map $F$ from the circle $S^1$
to the set of compact connected subsets in the sphere.
Suppose that all $F(t)$ are disjoint, and do not separate the sphere.
Then the complement to the set $F(S^1)$ consists of exactly two connected components.
\end{thm}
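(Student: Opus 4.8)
The plan is to imitate the modern Alexander-duality proof of the classical Jordan curve theorem, with the two arcs of a simple closed curve replaced by images of arcs under $F$. Write $S^1=A_1\cup A_2$ as the union of two closed arcs meeting in exactly two points $p$ and $q$. Each $A_i$ is homeomorphic to $[0,1]$, and the restriction of $F$ to it is again upper semicontinuous, so by Proposition \ref{usc_conn} the set $F(A_i)$ is connected, and by Theorem \ref{non-sep} it is non-separating; by Proposition \ref{usc} the sets $F(A_i)$ and $F(S^1)$ are closed. Put $U_i=S^2-F(A_i)$, an open subset of the sphere.

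Since $F(A_i)$ is connected and non-separating, $U_i$ is connected, so $h_0(U_i)=1$, and Alexander's duality (Theorem \ref{alexander}) gives $h_1(U_i)=0$ because the complement $F(A_i)$ has a single component. The set whose components we must count is $U_1\cap U_2=S^2-F(S^1)$. On the other hand $U_1\cup U_2=S^2-(F(A_1)\cap F(A_2))$, and since the values $F(t)$ are pairwise disjoint we have $F(A_1)\cap F(A_2)=F(p)\cup F(q)$. As $F(p)$ and $F(q)$ are disjoint nonempty continua, $F(p)\cup F(q)$ has exactly two components, so Alexander's duality yields $h_1(U_1\cup U_2)=1$. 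I also claim $U_1\cup U_2$ is connected, i.e. $h_0(U_1\cup U_2)=1$: this is Proposition \ref{separate} applied to the two disjoint non-separating closed sets $F(p)$ and $F(q)$. The hypothesis there that the intersection be connected is satisfied in the degenerate empty-intersection case as well, since when $F(p)\cap F(q)=\emptyset$ one has $U_1\cup U_2=S^2$ in the proof and $H_1(S^2)=0$, which is all the argument uses.

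It remains to feed these numbers into the Mayer--Vietoris sequence (Theorem \ref{t.MV}) for $U_1,U_2$. Because $F(p)\cup F(q)\neq\emptyset$ we have $U_1\cup U_2\neq S^2$, so $H_2(U_1\cup U_2)=0$ and the sequence genuinely begins with $0\to H_1(U_1\cap U_2)$. Exactness at the left end, combined with $H_1(U_1)\oplus H_1(U_2)=0$, forces $h_1(U_1\cap U_2)=0$. Substituting $h_1(U_i)=0$, $h_1(U_1\cup U_2)=1$, $h_0(U_i)=1$, and $h_0(U_1\cup U_2)=1$ into the alternating-sum relation recorded after Theorem \ref{t.MV} then gives $h_0(U_1\cap U_2)=h_1(U_1\cap U_2)+2=2$. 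Hence $S^2-F(S^1)$ has exactly two connected components.

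I expect the main obstacle to be the analysis of $F(p)\cup F(q)$: one must be certain it is non-separating and has exactly two components, which is precisely where the disjointness and individual non-separation of the values $F(t)$ enter, and where the degenerate case of Proposition \ref{separate} must be checked. Everything else --- upper semicontinuity and closedness of the restrictions and unions, the application of Alexander duality to the two simple complements, and the purely linear-algebraic extraction of $h_0$ and $h_1$ from the exact sequence --- is routine given the results already established.
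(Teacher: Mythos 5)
Your proof is correct and follows essentially the same route as the paper's: the same decomposition $S^1=A_1\cup A_2$ into two arcs meeting at $\{p,q\}$, the same identification $F(A_1)\cap F(A_2)=F(p)\cup F(q)$, and the same computation of Betti numbers via Alexander duality (Theorem \ref{alexander}), Proposition \ref{separate}, Theorem \ref{non-sep}, and the Mayer--Vietoris sequence (Theorem \ref{t.MV}). The only (harmless) deviations are that you obtain $h_1(U_1\cap U_2)=0$ from exactness at the left end of the sequence rather than from Alexander duality applied to the connected set $F(S^1)$, and that you explicitly patch the degenerate empty-intersection case of Proposition \ref{separate} and the vanishing of $H_2(U_1\cup U_2)$, two points the paper passes over in silence.
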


\begin{proof}
Consider two closed arcs $A_1$ and $A_2$ of $S^1$ such that $A_1\cap A_2$ is a pair of points
(the common endpoints of $A_1$ and $A_2$) and $S^1 = A_1\cup A_2$ (the second condition follows
from the first).
Set $X_i = F(A_i)$ and $U_i = S^2-X_i$.
We can use the Mayer--Vietoris theorem \ref{t.MV} for $U_1$ and $U_2$.
What we want to know is the term $h_0(U_1\cap U_2)$.
It turns out that we can compute all other terms.
By Theorem \ref{alexander} and Proposition \ref{usc_conn}, we know all $h_1$-terms:
$$
h_1(U_1\cap U_2)=h_1(U_1)=h_1(U_2)=0,\quad h_1(U_1\cup U_2)=1.
$$
By Proposition \ref{non-sep}, the set $X_i$ does not separate the sphere, hence $h_0(U_i) = 1$.
The complement to $U_1\cup U_2$ is the union of two disjoint compact connected non-separating sets.
By Proposition \ref{separate}, this union does not separate the sphere.
Therefore, $h_0(U_1\cup U_2) = 1$.
By Theorem \ref{t.MV}, we can now conclude that $h_0(U_1\cap U_2)=2$ as stated.
\end{proof}

\section{Quotients of the sphere}

In this section, we prove Theorem \ref{moore}.
Consider a closed equivalence relation $\sim$ on $S^2$ satisfying the assumptions of
the theorem.
The quotient space of a compact connected Hausdorff space by a closed equivalence
relation is also compact, connected and Hausdorff.
In addition, if the space is locally path connected, and all equivalence classes
are connected, then the quotient is also locally path connected.
See e.g. \cite{Kuratowski}.
Thus we know that $X = S^2/\sim$ is compact, Hausdorff, connected and locally
path connected.
To prove Theorem \ref{moore}, we need to show that $X$ satisfies the
Jordan domain axiom and the Basis axiom.
Let $\pi:S^2\to X$ denote the canonical projection.

\begin{thm}
\label{Jordan-in-X}
The space $X$ satisfies the Jordan domain axiom.
Namely, for every simple closed curve $C$ in $X$,
the complement to $C$ consists of exactly two connected components.
The boundary of each of these components coincides with $C$.
\end{thm}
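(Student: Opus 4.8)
The plan is to transfer the question to the sphere through the projection $\pi\colon S^2\to X$ and then invoke the generalized Jordan curve theorem (Theorem \ref{Jordan}). Write $C=\gamma(S^1)$ for a topological embedding $\gamma\colon S^1\to X$, and define $F(t)=\pi^{-1}(\gamma(t))$. Each $F(t)$ is a single $\sim$-class, hence compact (classes of a closed relation are closed in the compact $S^2$), connected, and non-separating by hypothesis; the classes are pairwise disjoint because $\gamma$ is injective. To see that $F$ is upper semicontinuous I would use Proposition \ref{usc} in its $S^1$-version: for a closed $A\subseteq S^1$ the set $\gamma(A)$ is compact, hence closed in the Hausdorff space $X$, so $F(A)=\pi^{-1}(\gamma(A))$ is closed. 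Theorem \ref{Jordan} then gives that $S^2-F(S^1)=S^2-\pi^{-1}(C)$ has exactly two connected components, which I call $W_1$ and $W_2$.

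Next I would push this decomposition down to $X$. The open set $S^2-\pi^{-1}(C)$ is saturated, and since every $\sim$-class contained in it is connected, each such class lies entirely in $W_1$ or entirely in $W_2$; thus $W_1$ and $W_2$ are themselves saturated. Because $S^2$ is locally connected, $W_1$ and $W_2$ are open, so $V_i:=\pi(W_i)$ is open in $X$ (its $\pi$-preimage is the saturated open set $W_i$, and $\pi$ is a quotient map). The sets $V_1,V_2$ are connected, disjoint, and $V_1\cup V_2=\pi(S^2-\pi^{-1}(C))=X-C$; hence $X-C$ has exactly two components, as required.

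It remains to show $\partial V_1=\partial V_2=C$. Since $V_1,V_2$ are disjoint open sets with union $X-C$, one has $\partial V_i\subseteq C$ immediately, so the real content is the reverse inclusion. Here I would use that $\pi$, being a continuous map from a compact space to a Hausdorff space, is closed, whence $\pi(\overline{W_i})=\overline{V_i}$. Consequently a point $p\in C$ lies in $\overline{V_i}$ if and only if the class $E:=\pi^{-1}(p)$ meets $\overline{W_i}$. Thus the whole statement reduces to the claim that \emph{every} class $E=F(t_0)\subseteq\pi^{-1}(C)$ meets both $\overline{W_1}$ and $\overline{W_2}$.

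This last claim is the heart of the argument, and I expect it to be the main obstacle. I would prove it by a shrinking-arc argument. Fix $t_0$ and take closed arcs $A_n\subseteq S^1$ with $t_0$ in their relative interiors and $\bigcap_n A_n=\{t_0\}$. The complement $S^1-\mathrm{int}\,A_n$ is a closed arc, so by Theorem \ref{non-sep} the set $F(S^1-\mathrm{int}\,A_n)$ is non-separating and $G_n:=S^2-F(S^1-\mathrm{int}\,A_n)$ is connected; moreover $G_n=W_1\cup W_2\cup F(\mathrm{int}\,A_n)$. Since $\partial W_1\subseteq\pi^{-1}(C)=F(S^1)$, a direct computation of the closure of $W_1$ inside $G_n$ gives $\overline{W_1}\cap G_n=W_1\cup(\partial W_1\cap F(\mathrm{int}\,A_n))$, so if $\partial W_1\cap F(\mathrm{int}\,A_n)$ were empty then $W_1$ would be a proper nonempty clopen subset of the connected set $G_n$ --- impossible. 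Hence for each $n$ I can pick $x_n\in\partial W_1\cap F(\mathrm{int}\,A_n)$. A convergent subsequence $x_{n_k}\to x_\infty$ lands in the closed set $\partial W_1$, while the nestedness of the closed sets $F(A_n)$ forces $x_\infty\in\bigcap_n F(A_n)=F(t_0)=E$; thus $E\cap\overline{W_1}\neq\emptyset$. The identical argument with $W_2$ finishes the proof, and I would stress that the only genuinely delicate point is the clopen dichotomy for $G_n$, since $\partial W_1$ may a priori contain many classes besides $E$.
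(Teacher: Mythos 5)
Your proof is correct, and its first half---defining $F(t)=\pi^{-1}(\gamma(t))$, checking upper semicontinuity via Proposition \ref{usc}, invoking Theorem \ref{Jordan}, and pushing the two saturated open components $W_1,W_2$ down to $X$---is exactly the paper's argument. The two proofs diverge on the boundary statement $\partial V_i=C$. The paper works downstairs in $X$: given $x=\gamma(t)\in C$ and a neighborhood $V$ of $x$, it chooses a closed arc $I\subset S^1$ with $t\notin I$ and $I\cup\gamma^{-1}(V)=S^1$, observes that $\gamma(I)$ does not separate $X$ (Theorem \ref{non-sep} applied upstairs to $F(I)$ and pushed down through $\pi$), and then connects $x$ to points of each component by continuous paths avoiding $\gamma(I)$; such a path must cross $\partial V_i$, and since $\partial V_i\subseteq C\subseteq\gamma(I)\cup V$, the crossing can only occur inside $V$. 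This step uses the local path connectivity of the quotient $X$, which the paper has already established by reference to Kuratowski. Your argument instead stays upstairs in $S^2$: you apply Theorem \ref{non-sep} to the complementary arc to get connectedness of $G_n=S^2-F(S^1-\mathrm{int}\,A_n)$, run the clopen dichotomy to force $\partial W_1\cap F(\mathrm{int}\,A_n)\neq\emptyset$, extract a limit point lying in $E\cap\partial W_1$ using compactness, nestedness, and the disjointness of classes, and only then transfer to $X$ via closedness of $\pi$ (so that $\pi(\overline{W_i})=\overline{V_i}$). Both proofs hinge on the same key lemma---Theorem \ref{non-sep} applied to complementary arcs of $S^1$---but yours trades the path-connectivity input for a purely point-set connectedness argument plus the closed-map property of $\pi$; this makes it somewhat longer than the paper's, while having the mild advantage of not needing local path connectivity of the quotient at this stage.
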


As before, we call these connected components the
{\em Jordan domains} bounded by the curve $C$.

\begin{proof}
Consider a closed simple path $\gamma:S^1\to X$ parameterizing $C$,
i.e. such that $\gamma(S^1) = C$.
Define the function $F$ from $S^1$ to the set of compact subsets in the sphere by
the formula $F(t)=\pi^{-1}(\gamma(t))$.
The sets $F(t)$ are disjoint, connected and non-separating.
For any closed (hence compact) arc $A\subseteq S^1$, the set $\gamma(A)$ is compact
(hence closed) in $X$, therefore, the set $\pi^{-1}(\gamma(A))$ is closed in $S^2$.
It follows that $F$ satisfies the assumptions of Theorem \ref{Jordan}.
By this theorem, the complement to $F(S^1)=\pi^{-1}(C)$ splits into two disjoint
open connected sets.
These sets project to some open connected sets $U_1$ and $U_2$ in $X$.
Clearly, $X = C\sqcup U_1\sqcup U_2$.

It remains to prove that $\d U_i=C$ for $i=1,2$.
Clearly, $\d U_i\subseteq C$.
We want to show that the opposite inclusion holds.
Take a point $x=\gamma(t)\in C$ and a neighborhood $V$ of $x$.
It suffices to prove that $V$ will necessarily intersect $\d U_1$ and $\d U_2$.
Let $I$ be a closed arc of $S^1$ such that $t\not\in I$ and $I\cup\gamma^{-1}(V)=S^1$.
By Theorem \ref{non-sep}, the set $\gamma(I)$ does not separate $X$.
It follows that $x$ can be connected to some point in $U_1$ and
to some point in $U_2$ by continuous paths avoiding $\gamma(I)$.
Since $x$ is neither in $U_1$ nor in $U_2$, these paths must
intersect $\d U_1$ and $\d U_2$ (respectively),
and these intersections can only happen in $V$.
\end{proof}

It remains to prove that $X$ satisfies the Basis axiom.

\begin{thm}
\label{Jordanization}
  Let $x,y\in X$ be two points, and $\alpha:S^1\to S^2$ a simple closed path
  that separates $\pi^{-1}(x)$ from $\pi^{-1}(y)$.
  Then there is a simple closed curve $\Gamma$ in $X$ that separates
  $x$ from $y$ and such that $\Gamma\subseteq\pi\circ\alpha(S^1)$.
\end{thm}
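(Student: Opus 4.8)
The plan is to translate the separation hypothesis between $S^2$ and $X$ through $\pi$, and then to carve a simple closed curve out of the loop $\pi\circ\alpha$ by cutting it at its self-identifications. I would start from the observation that $\pi$, being a continuous surjection of the compact Hausdorff space $S^2$ onto the Hausdorff space $X$ with connected fibres (the equivalence classes), is a closed monotone map; hence $\pi^{-1}(C)$ is connected whenever $C\subseteq X$ is connected. This yields the correspondence that a closed saturated set $Z\subseteq X$ separates $x$ from $y$ in $X$ if and only if $\pi^{-1}(Z)$ separates $\pi^{-1}(x)$ from $\pi^{-1}(y)$ in $S^2$: the complementary components in $X$ pull back (by monotonicity) to the complementary components in $S^2$, and conversely a path joining $x$ to $y$ in $X-Z$ would pull back to a connected set inside $S^2-\alpha(S^1)=D_x\sqcup D_y$ meeting both Jordan domains, which is impossible. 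Since $\pi^{-1}(\pi(\alpha(S^1)))\supseteq\alpha(S^1)$ already separates $\pi^{-1}(x)$ from $\pi^{-1}(y)$, the correspondence shows that the full image $K:=\pi(\alpha(S^1))$ separates $x$ from $y$ in $X$.

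Writing $g=\pi\circ\alpha:S^1\to X$, so $K=g(S^1)$, I would try to shrink $K$ to a simple closed curve while keeping it separating. If $g$ is injective then $K$ is already a simple closed curve and we are done. Otherwise pick $s\ne t$ with $g(s)=g(t)=:p$; the two arcs of $S^1$ bounded by $s$ and $t$ give sub-loops with images $K_1$ and $K_2$, with $K=K_1\cup K_2$ and $p\in K_1\cap K_2$. The idea is to apply Proposition \ref{separate} in $S^2$ to the closed sets $\pi^{-1}(K_1)$ and $\pi^{-1}(K_2)$, whose intersection is $\pi^{-1}(K_1\cap K_2)$: if $K_1\cap K_2$ is connected then so is this intersection (monotonicity), and since $\pi^{-1}(K)=\pi^{-1}(K_1)\cup\pi^{-1}(K_2)$ separates $\pi^{-1}(x)$ from $\pi^{-1}(y)$, Proposition \ref{separate} forces one of $\pi^{-1}(K_1)$, $\pi^{-1}(K_2)$ to separate them as well. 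By the correspondence of the first paragraph, the corresponding $K_i$ separates $x$ from $y$ in $X$; I would discard the other sub-loop, repeat, and finally realize the surviving loop as a genuine simple closed curve inside $K$ by a closed-path version of Lemma \ref{simplify-path}.

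The whole difficulty is concentrated in this reduction step: one must ensure that $K_1\cap K_2$ is connected (so that Proposition \ref{separate} applies) and that the procedure terminates, and neither is automatic, since $g$ may identify points in an intricate, even Cantor-like, way, so that sub-loops share disconnected sets. To control the intersection I would seek an \emph{innermost} cut, choosing $s,t$ so that $g$ on the open arc $(s,t)$ shares no value with $g$ on the complementary arc; then $K_1\cap K_2=\{p\}$, with connected preimage $\pi^{-1}(p)$, and it is exactly here that the hypotheses on the classes should be used: their being connected and non-separating ought to forbid the identifications of $\alpha$ from ``crossing'' and thereby make such a cut available. For termination it is cleaner to replace the stepwise cutting by a compactness argument: order the subcontinua of $K$ that separate $x$ from $y$ by reverse inclusion, note (as in the neighbourhood argument of Theorem \ref{non-sep}) that the intersection of a nested family of such continua again separates — a continuum joining $x$ to $y$ in the complement of the intersection would by compactness miss some member — and apply Zorn's lemma to obtain a minimal separating subcontinuum $M\subseteq K$. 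The crux, which I expect to be the hardest part, is then to prove that minimality forces $M$ to be a simple closed curve; I would attempt this by showing that a subcontinuum of $K$ carrying no simple closed curve cannot separate $x$ from $y$ (so that $M$ contains a separating simple closed curve and equals it by minimality), the non-separation itself being reduced to $S^2$ via the generalized Jordan curve theorem (Theorem \ref{Jordan}).
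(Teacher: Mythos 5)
Your first paragraph (the transfer of separation between $S^2$ and $X$ via monotonicity of $\pi$) is correct, and so is the Zorn-type existence of a minimal separating subcontinuum; but both mechanisms you propose for actually extracting a simple closed curve break down, and not merely for lack of detail. First, the ``innermost cut'' need not exist: connectedness and non-separation of the classes do \emph{not} forbid linked identifications, because two classes can bridge across $\alpha(S^1)$ from opposite sides. Concretely, let $\alpha(S^1)$ be a round circle, let $C_1$ be a chord of the inner disk with endpoints $\alpha(s_1),\alpha(t_1)$, let $C_2$ be an arc in the outer domain with endpoints $\alpha(s_2),\alpha(t_2)$, where $s_1,s_2,t_1,t_2$ occur in this cyclic order, and let all other classes be singletons. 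This relation is closed, all classes are connected and non-separating, and $g=\pi\circ\alpha$ makes exactly the two identifications $g(s_1)=g(t_1)$ and $g(s_2)=g(t_2)$. These pairs are linked, so for either cut the two complementary open arcs share a value of $g$, and $K_1\cap K_2$ is a two-point set; Proposition \ref{separate} is never applicable. Worse, the combinatorics of identifications alone cannot tell you what to keep: here $K$ is a graph with two vertices and four edges, some of the $2$-edge cycles separate $x$ from $y$ and some do not, and which ones do depends on where $x$ and $y$ sit relative to the classes. Any correct selection rule must use separation data; this is exactly what the paper supplies, via a mod $2$ intersection number (an arc of $S^1$ whose endpoints are $g$-identified is ``even'' if curves from $\pi^{-1}(x)$ to $\pi^{-1}(y)$ avoiding the corresponding class cross its $\alpha$-image an even number of times), a greedy removal of longest even arcs, and limit arguments to handle infinitely many identifications. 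Nothing in your sketch plays this role.

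Second, the crux of your fallback --- ``a subcontinuum of $K$ carrying no simple closed curve cannot separate $x$ from $y$'' --- is false at any level of generality that does not use the specific structure of $K$. Minimality via Zorn gives no local connectivity, and separating continua in $S^2$ containing no simple closed curve exist: the Warsaw circle, and even a minimal example, Bing's pseudo-circle, which separates $S^2$, contains no arcs at all, and all of whose proper subcontinua are non-separating. Since the pseudo-circle has arbitrarily small closed annulus neighborhoods, it is a subcontinuum of a Peano continuum (hence of a continuous image of $S^1$) avoiding $x$ and $y$; so ``subcontinuum of the image of a circle'' is not by itself a usable hypothesis, and you would have to exploit that $K=\pi\circ\alpha(S^1)$ where the fibres of $\pi$ are non-separating --- which is precisely what the parity argument does, working on the parameter circle where wild subcontinua cannot arise. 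Finally, Theorem \ref{Jordan} cannot be invoked for an arbitrary subcontinuum $M\subseteq K$: it requires an upper semicontinuous parameterization of $\pi^{-1}(M)$ by $S^1$ into disjoint non-separating compact connected sets, and producing such a parameterization is tantamount to the arc-removal construction you are trying to avoid. So the proposal correctly isolates where the difficulty lies, but leaves that step unproved, and both routes suggested for it fail as stated.
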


In the proof given below, we assume that $\alpha$ is a smooth embedding.
This is sufficient for our purposes.
The proof can be easily extended to the general case using relative homology.

\begin{proof}
  Let $A$ be an open arc of $S^1$.
  Suppose that the endpoints of $A$ belong to $\pi^{-1}(C)$
  for some equivalence class $C$ of $\sim$.
  Consider a smooth curve $\beta$ connecting $\pi^{-1}(x)$ to
  $\pi^{-1}(y)$ in $S^2$ and avoiding $C$
  (i.e. the endpoints of $\beta$ belong $\pi^{-1}(x)$ and $\pi^{-1}(y)$,
  respectively, and $\beta\cap C=\emptyset$).
  By a small perturbation, we can make $\beta$ transverse to $\alpha(A)$.
  We say that $A$ is {\em even} (respectively, {\em odd}), if every such $\beta$
  intersects $\alpha(A)$ even (respectively, odd) number of times.
  Note that the parity does not depend on $\beta$ provided that
  $\beta$ satisfies our assumptions, i.e. connects $\pi^{-1}(x)$ to
  $\pi^{-1}(y)$, avoids $C$ and is transverse to $\alpha(A)$.
  Indeed, consider two such curves $\beta_1$ and $\beta_2$.
  Let $\beta_i\cdot\alpha(A)\in\Z/2\Z$ be the residue modulo 2
  that represents the parity of the cardinality of $\beta_i\cap\alpha(A)$,
  $i=1,2$.
  The sum $\gamma=\beta_1+\beta_2$ represents a cycle in $H_1(S^2-C,\Z/2\Z)$.
  Since $H_1(S^2-C,\Z/2\Z)=0$, this cycle is homologous to 0.
  On the other hand, $\beta_1\cdot\alpha(A)+\beta_2\cdot\alpha(A)$
  is the image of $([\gamma],[\alpha(A)])$ under the Poincar\'e pairing
  $$
  H_1(S^2-C,\Z/2\Z)\times H_1^c(S^2-C,\Z/2\Z)\to\Z/2\Z.
  $$
  Here $H_1^c$ is the first homology space with compact support; the 
  curve $\alpha(A)$ defines a homology class $[\alpha(A)]$ in $H_1^c(S^2-C,\Z/2\Z)$.
  This image is zero, therefore, $\beta_1\cdot\alpha(A)=\beta_2\cdot\alpha(A)$.
  The proof will now consist of several steps.

  {\em Step 1.}
  Let $A_n$ be a sequence of even arcs that converges to some arc $A$.
  We will now prove that $A$ is also an even arc.
  Suppose that the endpoints of $A_n$ belong to $\alpha^{-1}(C_n)$
  for an equivalence class $C_n$ of $\sim$.
  Clearly, $C_n$ accumulate on some equivalence class $C$,
  and the endpoints of $A$ belong to $\alpha^{-1}(C)$.
  Consider a smooth curve $\beta$ connecting $\pi^{-1}(x)$ to
  $\pi^{-1}(y)$, avoiding $C$, and transverse to $\alpha(A)$.
  Since $C_n$ accumulate on $C$, the path $\beta$ is disjoint from
  $C_n$ for sufficiently large $n$.
  It follows that $\beta\cdot\alpha(A_n)=0\pmod 2$.
  Moreover, $\beta\cap\alpha(A_n)=\beta\cap\alpha(A)$ for
  sufficiently large $n$.
  Therefore, $A$ is even.

  {\em Step 2.}
  Let $A_1$ be a longest even arc of $S^1$.
  From Step 1, it follows that such arc exists.
  (We measure the lengths with respect to some fixed metric on $S^1$).
  Denote by $C_1$ the equivalence class of $\sim$ such that
  $\alpha^{-1}(C_1)$ contains the endpoints of $A_1$.
  By induction, we define $A_n$ as a longest even arc essentially disjoint
  from $A_1,\dots,A_{n-1}$.
  We set $C_n$ to be the equivalence class of $\sim$ such that $\alpha^{-1}(C_n)$
  contains the endpoints of $A_n$.
  Let $K_n$ be the complement to the union $A_1\cup\dots\cup A_n$,
  and $K$ the intersection of all $K_n$.
  Set $\Gamma_n=\pi\circ\alpha(K_n)$ and $\Gamma=\pi\circ\alpha(K)$.
  The set
  $$
  \alpha(K_n)\cup\bigcup_{i=1}^n C_i
  $$
  separates $\pi^{-1}(x)$ from $\pi^{-1}(y)$.
  Indeed, let $\beta$ be a smooth curve connecting $\pi^{-1}(x)$ to
  $\pi^{-1}(y)$, avoiding $C_1,\dots,C_n$, and transverse to $\alpha(S^1)$.
  There is a component $I$ of $K_n$ such that $\beta$ intersects $\alpha(I)$
  an odd number of times.
  Indeed, $\beta$ must intersect $\alpha(S^1)$ in an odd number of points,
  but it intersects the set $\alpha(A_1\cup\dots\cup A_n)$ even
  number of times.
  It follows that $\beta\cap\alpha(K_n)$ is nonempty.
  Therefore, $\Gamma_n$ separates $x$ from $y$.

  {\em Step 3.}
  We now prove that $\Gamma$ separates $x$ from $y$.
  Suppose not.
  Then $x$ and $y$ lie in the same component of the complement to $\Gamma$.
  Since $\Gamma$ is compact, this component must be open.
  By local path connectivity, there is a continuous path $\gamma:[0,1]\to X$
  such that $\gamma(0)=x$, $\gamma(1)=y$, and $\gamma[0,1]$ avoids $\Gamma$.
  However, for every $n$, the set $\Gamma_n$ separates $x$ from $y$.
  It follows that $\gamma(t_n)\in\Gamma_n$ for some $t_n\in [0,1]$.
  Let $t$ be any limit point of the sequence $t_n$.
  Then $\gamma(t)\in\Gamma$, a contradiction.

  {\em Step 4.}
  It remains to prove that $\Gamma$ is a simple closed curve in $X$.
  In other words: if $\pi\circ\alpha(s)=\pi\circ\alpha(t)$ for
  two different points $s$ and $t$ in $K$, then
  $s$ and $t$ are the endpoints of some $A_n$.
  In any case, $s$ and $t$ belong to $\alpha^{-1}(C)$ for some equivalence
  class $C$ of $\sim$.
  One of the arcs bounded by $s$ and $t$ must be even --- if both 
  arcs were odd, then a simple curve connecting $x$ with $y$ and
  intersecting $\alpha(S^1)$ transversally would have an even number of 
  intersection points with $\alpha(S^1)$, a contradiction.
  Let $A$ be the even arc with endpoints $s$ and $t$.
  Note that, since $s$ and $t$ belong to $K$, every $A_n$ is 
  either contained in $A$ or is essentially disjoint from $A$.
  Set $n$ to be the smallest positive integer such that $A_n\subseteq A$.
  If there is no such integer, then $A$ is essentially disjoint from all $A_n$.
  On the other hand, the length of $A_n$ must tend to zero.
  Therefore, there will be some $m$ for which $A_m$ is shorter than $A$.
  A contradiction with the choice of $A_m$, which shows that $n$ is well defined.
  Now, by the choice of $A_n$, we must have $A_n=A$.
  It follows that $s$ and $t$ are the endpoints of $A_n$.
\end{proof}

We can now prove the Basis axiom for $X$, thus completing the proof of Theorem \ref{moore}.
Given a point $x\in X$ and an open set $U$ containing $x$, we need to find
a Jordan domain in $X$ containing $x$ and contained in $U$.
Let $D$ be a small Jordan neighborhood of $\pi^{-1}(x)$ compactly contained in
$\pi^{-1}(U)$, and $E$ a smaller Jordan neighborhood of $\pi^{-1}(x)$
such that every equivalence class of $\sim$ intersecting $\overline E$
is contained in $D$ (the existence of $E$ follows from the fact
that the equivalence relation $\sim$ is closed).
The boundary of $E$ is a simple closed curve $C$ such that $\pi(C)$
lies in $\pi(D)\subseteq U$.
Choose any point $y\in X-U$.
Then $C$ separates $\pi^{-1}(x)$ from $\pi^{-1}(y)$.
By Theorem \ref{Jordanization}, there exists a simple closed curve
$\Gamma\subseteq\pi(C)\subset U$ that separates $x$ from $y$.
We claim that the Jordan domain $V$ bounded by $\Gamma$ and containing
$x$ is a subset of $U$.
Indeed, the boundary of $V$ is disjoint from the connected
set $\pi(S^2-\overline D)$ (if $\pi(C)$ intersects $\pi(S^2-\overline D)$, then
$C$ intersects some equivalence class of $\sim$ not lying in $D$, a contradiction).
Since $y$ is in $\pi(S^2-\overline D)$ but not in $V$,
the set $V$ must also be disjoint from $\pi(S^2-\overline D)\supseteq X-U$.
This concludes the proof of the Basis axiom.

\section{A relative version of Theorem \ref{moore-theory}}
\label{s_rel}

In this section, we extend Theorem \ref{moore-theory} to make it applicable in a wider
variety of contexts.
Consider a compact connected Hausdorff space $X$.
Let $\Ec$ be a set of simple curves in $X$.
Suppose that any simple curve that is a subset of a countable union of curves in $\Ec$
is also an element of $\Ec$.
In particular, any segment of a simple curve in $\Ec$ is also a simple curve in $\Ec$.
Define the set $\Ec^{\circ}$ of simple closed curves as follows: a simple
closed curve $C$ belongs to $\Ec^\circ$ if all proper arcs of $C$ are in $\Ec$.
If the set $\Ec$ is fixed, we will refer to elements of $\Ec$ as {\em elementary curves},
and to elements of $\Ec^\circ$ as {\em elementary closed curves}.

The Jordan domain axiom and the Basis axiom have the following relative versions
with respect to $\Ec$:
\begin{enumerate}
  \item {\em Relative Jordan domain axiom.}
  Every elementary closed curve $C$ divides $X$ into two connected components
  such that the boundary of each component is equal to $C$.
  The Jordan domains bounded by $C$ are called {\em elementary domains}.
  \item {\em Relative Basis axiom.}
  There is a countable basis of topology in $X$ consisting of elementary domains.
\end{enumerate}

\begin{thm}
  \label{rel-moore}
  Let a space $X$ and a set $\Ec$ of simple curves in $X$ be as above.
  Suppose that, for every connected open set $U\subseteq X$, every pair of points in $U$
  can be connected by an elementary curve lying in $U$.
  Suppose also, that $X$ satisfies the relative Jordan domain axiom, and the
  relative Basis axiom with respect to $\Ec$.
  Then $X$ is homeomorphic to the 2-sphere.
\end{thm}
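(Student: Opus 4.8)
The plan is to re-run the entire development of Sections 2--4 verbatim, systematically replacing ``simple curve'' by ``elementary curve'', ``Jordan domain'' by ``elementary domain'', and invoking the relative Jordan domain axiom only for elementary closed curves. The point is that every construction in those sections produces new curves by concatenating, simplifying, and cutting finitely or countably many curves that are already in $\Ec$ (or arcs of boundaries of elementary domains), so the closure assumptions on $\Ec$ guarantee that the output never leaves $\Ec$. Concretely, I would maintain two bookkeeping invariants throughout: (i) every simple curve we construct is a subset of a countable union of elementary curves, hence lies in $\Ec$; and (ii) every simple closed curve to which we apply the Jordan domain axiom is a finite or countable union of elementary curves and arcs of elementary closed curves, hence all of its proper arcs are in $\Ec$, so it lies in $\Ec^\circ$ and the relative Jordan domain axiom applies.

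First I would note that $X$ is automatically locally path connected: the relative Basis axiom supplies a countable basis of elementary domains, each of which is open and connected, hence path connected by the hypothesis that any two points of a connected open set are joined by an elementary curve. This hypothesis is exactly the replacement for ``open connected $\Rightarrow$ path connected'' used in Proposition \ref{appr-ext}; since the three pieces there (a curve inside $U$ and two curves inside elementary-domain neighbourhoods $V_a$, $V_b$) are now elementary, their concatenation lies in a finite union of elementary curves, and any simplification (Lemma \ref{simplify-path}, which is purely Hausdorff) stays inside that union and is therefore elementary by invariant (i).

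The heart of the argument is the relative Extension property: in an elementary domain $D$ bounded by the elementary closed curve $C$, any two points $a,b\in C$ are joined by an elementary curve lying in $D$ except for its endpoints. I would reprove it exactly as in Theorem \ref{ext-prty}, via the relative forms of Propositions \ref{Un} and \ref{connect-to-bdry}. The only delicate point occurs in Proposition \ref{connect-to-bdry}, where the connecting path $\alpha$ is a concatenation of countably many elementary curves accumulating at the boundary point $x$, and one must check that its simplification is still elementary even though its endpoint $x$ need not lie on any of those curves. This is resolved by observing that $x$ lies on the elementary closed curve $\d U$, so $x$ is contained in a short arc of $\d U$, which is itself elementary; thus the simplification is covered by a countable union of elementary curves and invariant (i) applies. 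The same observation repairs every other place where a constructed curve terminates on the boundary of an elementary domain. Granting the Extension property, Propositions \ref{div-domain}, \ref{crossing}, \ref{Jordan-intersect} go through unchanged (in \ref{Jordan-intersect} the curve $\Gamma$ is built from $\d U$ by splicing in countably many penetrating arcs of $\d V$, so invariant (ii) holds precisely because $\Ec$ is closed under simple subcurves of \emph{countable} unions), and likewise the Scylla--Charybdis and Bump Lemmas together with Corollary \ref{bump_curve} and Proposition \ref{p.make_simple_intersection}; in all of these the ambient curve $\Gamma$ is taken to be an elementary closed curve, typically $\d Q$.

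Finally I would carry over the remainder of Sections 3--4 to obtain the relative Covering property and, via Proposition \ref{curve2grid}, the existence of grids made of elementary horizontal and vertical curves that are subordinate to any open covering by elementary domains (Theorem \ref{subordinate-grid}). The final homeomorphism argument of Theorem \ref{moore-theory} then applies word for word to an elementary quadrilateral $Q$, showing that $\overline Q$ is homeomorphic to the closed square; applying this to the two elementary domains bounded by one fixed elementary closed curve $C$ and gluing the two resulting closed disks along $C\cong S^1$ yields $X\cong S^2$. The main obstacle, and the only genuinely new content, is the bookkeeping of invariants (i) and (ii): one must verify at each construction that the curves produced remain in $\Ec$ and $\Ec^\circ$, the critical cases being the countable accumulations in Propositions \ref{connect-to-bdry} and \ref{Jordan-intersect}, where the closure of $\Ec$ under countable (rather than merely finite) unions is indispensable.
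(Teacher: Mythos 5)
Your proposal is correct and follows essentially the same route as the paper: the paper's own proof of Theorem \ref{rel-moore} consists precisely of the observation that the argument for Theorem \ref{moore-theory} goes through with ``simple'' replaced by ``elementary'', because every newly constructed curve either stays inside a countable union of existing elementary curves or comes from the connectivity hypothesis on connected open sets --- exactly your invariants (i) and (ii). Your extra bookkeeping (the boundary point in Proposition \ref{connect-to-bdry}, the splicing in Proposition \ref{Jordan-intersect}, and the final gluing of the two closed disks) only makes explicit what the paper leaves implicit.
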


The proof of Theorem \ref{rel-moore} is the same as that
of Theorem \ref{moore-theory},
with simple curves and simple closed curves replaced with elementary curves
and elementary closed curves, respectively.
The fact that makes everything work is the following.
Every time we formed a new simple curve, or a new simple closed curve,
we either stayed in a countable union of existing simple curves, or
used the path connectivity of a connected open set.

\end{document}